\documentclass[12pt]{article}
\usepackage{color}
\usepackage{amsmath}
\usepackage{physics}
\usepackage{amsmath}
\usepackage{tikz}
\usepackage{mathdots}
\usepackage{yhmath}
\usepackage{cancel}
\usepackage{color}
\usepackage{siunitx}
\usepackage{array}
\usepackage{multirow}
\usepackage{amssymb}
\usepackage{gensymb}
\usepackage{tabularx}
\usepackage{extarrows}
\usepackage{booktabs}
\usetikzlibrary{fadings}
\usetikzlibrary{patterns}
\usetikzlibrary{shadows.blur}
\usetikzlibrary{shapes}

\usepackage{extarrows}
\usepackage{geometry}
\usepackage{authblk} 
\usepackage{hyperref} 
\usepackage{color}
\usepackage{ntheorem}
\usepackage{enumerate}

\usepackage[mathscr]{euscript}
\usepackage[utf8]{inputenc}
\def\0{\emptyset}

\newtheorem{theorem}{Theorem}[section]

\newtheorem{lemma}[theorem]{Lemma}
\newtheorem{claim}[theorem]{Claim}

\newtheorem{cor}[theorem]{Corollary}
\newtheorem{proposition}[theorem]{Proposition}

\newtheorem{construction}[theorem]{Construction}
\newtheorem{conjecture}[theorem]{Conjecture}
\newenvironment{proof}{{\noindent\it Proof.}}{\hfill $\square$\par}

\usepackage{graphicx}

\usepackage{enumerate}
\usepackage{enumitem}

\usepackage{
	amsmath,			
	amssymb,			
	enumerate,		    
	graphicx,			
	lastpage,			
	multicol,			
	multirow,			
	pifont,			    
}

\usepackage[numbers]{natbib}

\newcommand{\lc}{\left\lceil}
\newcommand{\rc}{\right\rceil}
\newcommand{\cA}{\mathcal{A}}
\newcommand{\cB}{\mathcal{B}}

\newcommand{\cD}{\mathcal{D}}
\newcommand{\cE}{\mathcal{E}}
\newcommand{\cF}{\mathcal{F}}
\newcommand{\cG}{\mathcal{G}}

\newcommand{\cH}{\mathcal{H}}

\newcommand{\cK}{\mathcal{K}}

\newcommand{\cL}{\mathcal{L}}
\newcommand{\cM}{\mathcal{M}}

\newcommand{\cS}{\mathcal{S}}
\newcommand{\cT}{\mathcal{T}}

\newcommand\ex{\ensuremath{\mathrm{ex}}}
\newcommand{\exsumr}{\text{ex}^{\sum}_r}
\newcounter{cases}
\newcounter{subcases}[cases]

\begin{document}


\title{Hypergraph extensions of the Alon--Frankl Theorem and rainbow hyper-Tur\'an problems 
}
\author{
Xiamiao Zhao\thanks{\small Department of Mathematical Sciences, Tsinghua University, Beijing 100084, China. Email:
\small \texttt{zxm23@mails.tsinghua.edu.cn}}\,
\hspace{0.2em}
Yuanpei Wang\thanks{\small  \textit{Corresponding author}. Department of Mathematics, Shanghai University, Shanghai 200444, P.R. China. Email:
\small \texttt{boyuan@shu.edu.cn}}\,
\hspace{0.2em} 
Junpeng Zhou\thanks{\small Department of Mathematics, Shanghai University, Shanghai 200444, P.R. China. Email:
\small \texttt{junpengzhou@shu.edu.cn}.} \thanks{\small Newtouch Center for Mathematics of Shanghai University, Shanghai 200444, P.R. China.}}

\date{}

\maketitle\baselineskip 16.3pt

\begin{abstract}
Given a graph $F$, the $r$-expansion $F^{(r)+}$ of $F$ is the $r$-uniform hypergraph obtained from $F$ by inserting $r-2$ new distinct vertices in each edge of $F$. 
Recently, Alon and Frankl (JCTB, 2024) and Gerbner (JGT, 2023) studied the maximum number of edges in $n$-vertex $F$-free graphs with bounded matching number, respectively. Gerbner, Tompkins and Zhou (EJC, 2025) considered the analogous Tur\'{a}n problems on hypergraphs with bounded matching number. In this paper, we study hypergraph extensions of the Alon--Frankl Theorem. More precisely, we determine the maximum number of hyperedges in an $n$-vertex $r$-uniform hypergraph containing neither a matching $M^r_{s+1}$ nor the expansion $K_{\ell+1}^{(r)+}$ of the clique $K_{\ell+1}$ for all small $s<\frac{\ell^2-1}{2}$ and all sufficiently large $s$, respectively. 
This result partly confirms a conjecture proposed by Gerbner, Tompkins and Zhou (EJC, 2025).

As a key tool, we determine the rainbow hyper-Tur\'{a}n number for expansions of cliques, which is defined as the maximum sum of size of a sequence of hypergraphs $\mathcal{H}_1,\dots,\mathcal{H}_k$ that contains no rainbow copies of expansions of cliques with given size.
It extends the result of Keevash, Saks, Sudakov and Verstra{\"e}te (AAM, 2004), which determined the rainbow Tur\'an number of cliques in the graph case.  
These results shows a correlation between the hyper-Tur\'an problem and the rainbow hyper-Tur\'an number.
\end{abstract}


{\bf Keywords:} hypergraph, expansion, matching, rainbow Tur\'{a}n number
\vskip.3cm

\section{Introduction}
An \textit{$r$-uniform hypergraph} ($r$-graph for short) $\cH=(V(\cH),E(\cH))$ consists of a vertex set $V(\cH)$ and a hyperedge set $E(\cH)$, where each hyperedge in $E(\cH)$ is an $r$-subset of $V(\cH)$. The size of $E(\cH)$ is denoted by $e(\cH)$.

Let $\mathcal{F}$ be an $r$-graph. An $r$-graph $\cH$ is \textit{$\mathcal{F}$-free} if $\cH$ does not contain $\mathcal{F}$ as a subhypergraph. The \textit{Tur\'{a}n number} of $\mathcal{F}$, denoted by ${\rm{ex}}_r(n,\mathcal{F})$, is the maximum number of hyperedges in an $n$-vertex $\mathcal{F}$-free $r$-graph. When $r=2$, we use $\mathrm{ex}(n,\mathcal{F})$ instead of $\mathrm{ex}_2(n,\mathcal{F})$. A classical result in extremal graph theory is Tur\'{a}n theorem \cite{turan1941egy}, which determines the exact Tur\'{a}n number for the $\ell$-vertex complete graph $K_\ell$. The Erd\H{o}s-Stone-Simonovits theorem \cite{erdos1966limit,erdos1946structure} gives an asymptotics of the Tur\'{a}n number for any $k$-chromatic graph. When $\cF$ is bipartite, the problem of determining $\ex(n,\cF)$ remains an active topic in extremal graph theory. For an extensive overview of the historical development, we refer the reader to the survey by Mubayi and Verstra{\"e}te \cite{mubayi2016a}. In particular, Erd\H{o}s and Gallai \cite{gallai1959maximal} determined the Tur\'{a}n number of $M_{s+1}$, where $M_{s+1}$ denotes a matching of size $s+1$, i.e., the graph consisting of $s+1$ independent edges.  

Let $K_{k+1}$ denote the complete graph on $k+1$ vertices, and let $G(n,\ell,s)$ denote the complete $\ell$-partite graph on $n$ vertices with one part of order $n-s$ and each other part of order $\lfloor\frac{s}{\ell-1}\rfloor$ or $\lceil\frac{s}{\ell-1}\rceil$.
Recently, Alon and Frankl~\cite{alon2024turan} considered Tur\'{a}n problems on graphs with bounded matching number. Specifically, they determined the exact value of ${\rm{ex}}(n,\{M_{s+1},K_{\ell+1}\})$, and showed that for $s\ge s_0(F)$ and $n\ge n_0(F)$, 
\begin{equation} \label{AlonFrankleq}
{\rm{ex}}(n,\{M_{s+1},F\})=|E(G(n,\ell,s))|, 
\end{equation}
where $F$ is an arbitrary color-critical graph of chromatic number $\ell+1$.
In~\cite{gerbner2024turan}, Gerbner generalized~\eqref{AlonFrankleq} as follows.

\begin{theorem}[\cite{gerbner2024turan}]\label{thm1}
If $\chi(F)>2$ and $n$ is sufficiently large, then $\ex(n,\{F,M_{s+1}\})=\ex(s,\cF)+s(n-s)$, where $\cF$ is the family of graphs obtained by deleting an independent set from $F$.
\end{theorem}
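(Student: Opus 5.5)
The plan is a lower-bound construction followed by a stability-type structural argument for the upper bound. Throughout, $s$ and $F$ are fixed and $n\to\infty$.

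\textbf{Lower bound.} Take a set $A$ of $s$ vertices and place on it an extremal $\cF$-free graph with $\ex(s,\cF)$ edges. Let $B$ be the remaining $n-s$ vertices, an independent set, and join every vertex of $A$ to every vertex of $B$.
\begin{itemize}
\item Every edge meets $A$, so the matching number is at most $s$ and there is no $M_{s+1}$.
\item Suppose a copy of $F$ appears. Its vertices in $B$ form an independent set $I$. Deleting $I$ leaves a copy of a member of $\cF$ inside $G[A]$, which is impossible.
\end{itemize}
This gives $\ex(s,\cF)+s(n-s)$ edges.

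\textbf{Upper bound, step 1: a small cover.} Let $G$ be extremal, so $e(G)\ge s(n-s)$, and $\nu(G)\le s$. Take a maximum matching $M$ with vertex set $V(M)$, $|V(M)|\le 2s$. Then $V(M)$ covers every edge and $V\setminus V(M)$ is independent. Hence $e(G)\le \binom{2s}{2}+\sum_{v\in V(M)}d_{B}(v)$, where $B=V\setminus V(M)$. Since $e(G)\ge s(n-s)$, at least $s$ vertices of $V(M)$ have degree linear in $n$. Let $A$ be the set of vertices of degree at least $\varepsilon n$, for a suitable small constant $\varepsilon$.

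\textbf{Step 2: $|A|=s$ exactly.}
\begin{itemize}
\item If $|A|\ge s+1$, greedily match the vertices of $A$ to distinct private neighbours in $B$. This is possible because their degrees are linear. The result is an $M_{s+1}$, a contradiction.
\item If $|A|\le s-1$, then $e(G)\le (s-1)n+O(s\varepsilon n)+O(s^2)$. This is less than $s(n-s)$ for small $\varepsilon$ and large $n$, a contradiction.
\end{itemize}

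\textbf{Step 3: no edges outside $A$.} Let $R=V\setminus A$. A standard matching-exchange argument applies: an edge inside $R$, together with private neighbours for the $s$ vertices of $A$, yields an $M_{s+1}$. Indeed, each vertex of $A$ has linearly many neighbours, so we can avoid the two endpoints of the edge. Thus $R$ is independent, and
\[
e(G)=e(G[A])+e(A,R)\le e(G[A])+s(n-s).
\]
Modulo small leftover cases, such as edges inside $R$ among low-degree vertices, which the exchange argument handles, it remains to show $G[A]$ is $\cF$-free.

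\textbf{Step 4: $G[A]$ is $\cF$-free — the main obstacle.} Suppose $G[A]$ contains some $F'=F-I\in\cF$, with $I$ independent in $F$. We want to embed $I$ into $R$. The difficulty is that vertices of $A$ have degree only $\ge\varepsilon n$, not full degree into $R$, so we need common neighbourhoods in $R$ of the relevant subsets of $A$.

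To handle this, we first show that almost every vertex of $A$ is adjacent to almost all of $R$. Otherwise, deleting its edges to $R$ and replacing them by complete joins strictly increases the edge count. This replacement keeps $G$ $M_{s+1}$-free. We must also check it keeps $G$ $F$-free. For that, we argue with an extremal $G$ maximizing $e(A,R)$ among all extremal graphs. Alternatively, we directly count: missing edges cost linearly in $n$, while $e(G[A])$ is at most $\binom{s}{2}$.

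So $e(A,R)\ge s(n-s)-\binom{s}{2}$. Each vertex of $A$ then misses at most $\binom{s}{2}$ vertices of $R$. Hence all of $A$ has at least $n-s-s\binom{s}{2}$ common neighbours in $R$, which exceeds $|F|$. We embed $I$ there, obtaining $F$ in $G$, a contradiction.

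Combining, $e(G)\le \ex(s,\cF)+s(n-s)$. The delicate points are the precise constants in the degree threshold $\varepsilon$ and the near-completeness of the join between $A$ and $R$, which is where I expect the main work.
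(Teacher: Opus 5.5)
The paper does not prove this statement. It is quoted from Gerbner \cite{gerbner2024turan} as background, so there is no in-paper argument to compare against. Your proposal is a correct and essentially complete proof on its own. Its skeleton is the graph-case version of what the paper does for hypergraphs in the proof of Theorem~\ref{thm: small-s}(ii). There the authors show that the high-degree set $V_0$ has size exactly $s$, use greedy matchings to exclude edges avoiding it, and compare with the lower bound to bound the missing cross edges. They then build the forbidden configuration around any offending edge. In the graph case this is simpler, because the number of missing cross edges is $O(1)$ rather than $O(n^{r-2})$.

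Some tidy-ups are worth making.

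\emph{The step you call the main obstacle is already settled by your direct count.} Write $m=s(n-s)-e(A,R)$. Then
\[
e(G[A])+s(n-s)-m=e(G)\ge \ex(s,\cF)+s(n-s),
\]
so $m\le \binom{s}{2}-\ex(s,\cF)\le\binom{s}{2}$. Hence all but at most $\binom{s}{2}$ vertices of $R$ are adjacent to every vertex of $A$. Since $I$ is independent in $F$, its vertices only need neighbours in $A$, and the embedding goes through.

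\emph{Drop the alternative route.} Replacing missing cross edges by a complete join, or choosing $G$ to maximize $e(A,R)$, is not justified as stated. Completing the join creates a copy of $F$ exactly when $G[A]$ contains a member of $\cF$, which is the very thing you are trying to rule out.

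\emph{Step 3 has no leftover cases.} The exchange argument applies to every edge inside $R$, whatever the degrees of its endpoints.

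\emph{Say where $\chi(F)>2$ is used.} It guarantees that every member of $\cF$ has an edge: if $F-I$ were edgeless, $F$ would be bipartite. So the empty graph on $s$ vertices is $\cF$-free and $\ex(s,\cF)\ge 0$. You need this both for $e(G)\ge s(n-s)$ in Steps 1--2 and for the bound $m\le\binom{s}{2}$ above.
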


In the case $F$ is bipartite, Gerbner~\cite{gerbner2024turan} also determined ${\rm{ex}}(n,\{M_{s+1},F\})$ apart from an additive constant term.

Hypergraph Tur\'{a}n problems are notoriously more difficult than graph versions. A fundamental object of investigation in hypergraph Tur\'{a}n problems is expansions. Given a graph $F$ with vertices $V(F)=\{v_1,\dots,v_{|V(F)|}\}$, the \textit{$r$-uniform expansion} (or briefly $r$-expansion) $F^{(r)+}$ of $F$ is the $r$-graph obtained from $F$ by inserting $r-2$ new distinct vertices in each edge of $F$, such that the $(r-2)e(F)$ new vertices are distinct from each other and are not in $V(F)$. It was introduced by Mubayi in \cite{mubayi2006a}.
And the original vertices $\{v_1,\dots,v_{|V(F)|}\}$ are called the \textit{core vertices} of $F^{(r)+}$.

In 1965, Erd\H{o}s \cite{erdos1965problem} proposed the Erd\H{o}s-Matching Conjecture on the Tur\'an number of an $r$-uniform matching $M^{(r)+}_{s+1}$.

\begin{conjecture}[\cite{erdos1965problem}]\label{con1}
Let integers $r\geq2$, $s\geq1$ and $n\geq (s+1)r-1$. Then $$\ex_r(n,M^{(r)+}_{s+1})\leq \max\left\{ \binom{(s+1)r-1}{r}, \binom{n}{r} - \binom{n-s}{r} \right\}.$$
\end{conjecture}

It is worth knowing that the conjecture is known to hold for the cases $r=2$ \cite{gallai1959maximal} and $r=3$ \cite{frankl2017maximum,luczak2014erdHos}. For the general case, Erd\H{o}s \cite{erdos1965problem} verified Conjecture \ref{con1} for large $n$. Later, the threshold was improved in several papers by Bollob\'as, Daykin, Erd\H{o}s \cite{bollob1976sets}, Huang, Loh, Sudakov \cite{huang2012size}, Frankl, \L uczak, Mieczkowska \cite{luczak2014erdHos}. For further developments and relevant results concerning this conjecture, one can refer to \cite{frankl2013improved,frankl2017proof,frankl2022erdHos,kolupaev2023erdHos}.
Very recently, Kupavskii and Sokolov \cite{kupavskii2025complete} completely resolved the conjecture for the case $n\leq 3(s+1)$.
It is known to hold for sufficiently large $n$, and we state a recent result due to Frankl~\cite{frankl2013improved}.

\begin{theorem}[\cite{frankl2013improved}]\label{emc}
Let $r,s\geq1$ and $n\geq (2s+1)r-s$. Then $\ex_r(n,M^{(r)+}_{s+1})=\sum_{i=1}^s\binom{s}{i}\binom{n-s}{r-i}$. Equality holds only for families isomorphic to $\mathcal{A}(n,r,s)$, where $\mathcal{A}(n,r,s)$ is the $r$-graph consisting of every hyperedge which intersects a fixed $s$-set. 
\end{theorem}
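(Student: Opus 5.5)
We prove the lower bound and the upper bound separately. The lower bound is immediate: $\mathcal{A}(n,r,s)$ contains no $M^{(r)+}_{s+1}$, since every hyperedge meets the fixed $s$-set and so any matching has at most $s$ edges. A direct count gives $|\mathcal{A}(n,r,s)|=\sum_{i=1}^s\binom{s}{i}\binom{n-s}{r-i}=\binom{n}{r}-\binom{n-s}{r}$. Write $f(n,r,s)$ for this quantity.

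For the upper bound, first apply the standard shifting operations $S_{ij}$. They preserve the number of hyperedges and do not increase the matching number. So we may assume the extremal family $\mathcal{F}\subseteq\binom{[n]}{r}$ is shifted, i.e.\ closed under replacing an element by a smaller one. Then run an induction on $s$; the case $s=0$ is trivial. Split $\mathcal{F}$ at vertex $1$ into $\mathcal{F}(1)=\{F\in\mathcal{F}:1\in F\}$ and $\mathcal{F}(\bar 1)=\{F\in\mathcal{F}:1\notin F\}$. Trivially $|\mathcal{F}(1)|\le\binom{n-1}{r-1}$. The identity
\[
f(n,r,s)=\binom{n-1}{r-1}+f(n-1,r,s-1)
\]
holds: edges through vertex $1$, plus edges avoiding $1$ that meet the remaining $s-1$ special vertices. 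Hence it suffices to prove $\nu(\mathcal{F}(\bar 1))\le s-1$ and then apply induction on $[2,n]$. The hypothesis $n-1\ge(2(s-1)+1)r-(s-1)$ follows from $n\ge(2s+1)r-s$.

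The main obstacle is the claim $\nu(\mathcal{F}(\bar 1))\le s-1$, which is not automatic. Suppose $F_1,\dots,F_s\in\mathcal{F}(\bar1)$ are pairwise disjoint. The natural move is to find an edge of $\mathcal{F}(1)$ disjoint from all of them, giving $s+1$ disjoint edges. By shiftedness, such an edge exists as soon as $\mathcal{F}(1)$ contains any edge whose elements can be pushed into the roughly $n-1-sr\ge(s+1)(r-1)$ free vertices. This pushing works because $n\ge(2s+1)r-s$ leaves room for an $(r-1)$-set above all of $F_1\cup\dots\cup F_s$ after compressing the $F_j$ into an initial segment.

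The case that escapes this argument is when $\mathcal{F}(1)$ is very small. I would handle it by a separate count instead of by the claim. If $\mathcal{F}(1)$ contains no edge that can be moved into the free part, shiftedness forces every edge through $1$ to have all its other elements in a small initial segment $[2,sr+1]$. Then $|\mathcal{F}(1)|\le\binom{sr}{r-1}$. In that case I would bound $|\mathcal{F}(\bar 1)|$ directly: $\mathcal{F}(\bar1)$ still has matching number at most $s$ on $n-1$ vertices, so the statement for $n-1$ applies (an induction on $n$ alongside $s$). Then I would verify
\[
\binom{sr}{r-1}+f(n-1,r,s)<f(n,r,s)
\]
in the range $n\ge(2s+1)r-s$. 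This inequality is exactly where the lower bound on $n$ enters, and I expect this numeric comparison to be the delicate step.

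Uniqueness follows by tracking equality through the induction. Equality forces $|\mathcal{F}(1)|=\binom{n-1}{r-1}$ at each stage and, inductively, forces $\mathcal{F}(\bar1)\cong\mathcal{A}(n-1,r,s-1)$. Finally, a standard argument shows that a family isomorphic to $\mathcal{A}(n,r,s)$ after shifting was already of that form before shifting, since unshifting cannot create a non-star-like extremal configuration.
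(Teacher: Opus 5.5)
The paper does not prove this statement; it is quoted from Frankl (2013). Your argument therefore has to stand on its own, and it breaks at exactly the step you flag.

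\textbf{The failure case is misdescribed.} After you compress the $s$ disjoint edges of $\mathcal{F}(\bar 1)$ into $[2,sr+1]$, shiftedness only moves elements downward. So the failure of your pushing step says precisely that $\{1,sr+2,\dots,sr+r\}\notin\mathcal{F}$. Equivalently, every edge through $1$ \emph{meets} $[2,sr+1]$, or every edge of $\mathcal{F}$ has at least two elements in $[sr+1]$. It does not force the other $r-1$ elements to lie \emph{inside} $[2,sr+1]$; the two conditions coincide only when $r=2$.

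For a counterexample, take $\mathcal{F}=\binom{[(s+1)r-1]}{r}$. It is shifted with $\nu(\mathcal{F})=s$, and it falls into your second case: an $(r-1)$-subset of $[2,(s+1)r-1]$ avoiding $[2,sr+1]$ would have to lie in $[sr+2,(s+1)r-1]$, which has only $r-2$ elements. Yet it contains $\{1,sr+1,\dots,sr+r-1\}$, and $|\mathcal{F}(1)|=\binom{(s+1)r-2}{r-1}>\binom{sr}{r-1}$ for $r\ge 3$. So the bound $|\mathcal{F}(1)|\le\binom{sr}{r-1}$, on which your numerical comparison rests, is false.

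\textbf{A sharper count of $\mathcal{F}(1)$ alone cannot rescue this case.} The correct bound is $|\mathcal{F}(1)|\le\binom{n-1}{r-1}-\binom{n-1-sr}{r-1}$. Together with $|\mathcal{F}(\bar 1)|\le f(n-1,r,s)$, it overshoots $f(n,r,s)$ by $\binom{n-1-s}{r-1}-\binom{n-1-sr}{r-1}>0$.

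Closing the case requires using $\nu(\mathcal{F})\le s$ jointly across $\mathcal{F}(1)$ and $\mathcal{F}(\bar 1)$. This configuration contains the competing clique construction. Controlling it is the real content of Frankl's theorem and the place where $n\ge(2s+1)r-s$ must do genuine work; it is not a routine binomial check.

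\textbf{The induction on $n$ has no base case.} Applying the theorem with the same $s$ to $\mathcal{F}(\bar 1)$ on $n-1$ vertices needs $n-1\ge(2s+1)r-s$. This fails at the threshold $n=(2s+1)r-s$, and that threshold case is again exactly the hard configuration.

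The claim that unshifting preserves the extremal structure is also only asserted, but that is secondary.
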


For expansions of complete graphs, Erd\H{o}s \cite{erdHos1971topics} conjectured that ${\rm{ex}}_r(n,K^{(r)+}_3)=\binom{n-1}{r-1}$ for $n\geq \frac{3}{2}r$. The conjecture was later proved by Mubayi and Verstra\"{e}te \cite{mubayi2005a}. Let $\cT_r(n,\ell)$ denote the complete balanced $\ell$-partite $r$-graph. For convenience, let $t_r(n,\ell):=e(\cT_r(n,\ell))$. Mubayi \cite{mubayi2006a} conjectured that $\cT_r(n,\ell)$ is the unique maximum $K^{(r)+}_{\ell+1}$-free $r$-graph for sufficiently large $n$. The conjecture was later proved by Pikhurko \cite{pikhurko2013exact}.

\begin{theorem}[\cite{pikhurko2013exact}]\label{Pikhurko}
Let integers $\ell \geq r\geq 3$. If $n$ is sufficiently large, then $\ex_r(n, K_{\ell+1}^{(r)+}) =t_r(n,\ell)$ and $\cT_r(n,\ell)$ is the unique extremal hypergraph.
\end{theorem}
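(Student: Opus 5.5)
The lower bound is immediate. In $\cT_r(n,\ell)$ every hyperedge meets each of the $\ell$ parts in at most one vertex. Any $\ell+1$ prospective core vertices of a copy of $K_{\ell+1}^{(r)+}$ contain two vertices $u,v$ from the same part. No hyperedge contains both $u$ and $v$, so the edge $uv$ of $K_{\ell+1}$ cannot be expanded. Hence $\ex_r(n,K_{\ell+1}^{(r)+})\ge t_r(n,\ell)$. For the upper bound and uniqueness I would use the stability method, in three steps.

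\textbf{Step 1 (asymptotics and stability).} First I show that every $K_{\ell+1}^{(r)+}$-free $\cH$ on $n$ vertices satisfies $e(\cH)\le (1+o(1))t_r(n,\ell)$. Moreover, if $e(\cH)\ge t_r(n,\ell)-o(n^r)$, then $\cH$ can be turned into $\cT_r(n,\ell)$ by changing $o(n^r)$ hyperedges. The route I would take is via the $2$-shadow and Lagrangians.
\begin{itemize}
\item Delete the $o(n^r)$ hyperedges containing a pair of low codegree. This is a cleaning step: any pair lying in at least $\binom{\ell+1}{2}r$ hyperedges can be used greedily as an expanded edge with fresh padding vertices.
\item In the cleaned hypergraph $\cH'$, a $K_{\ell+1}$ in the $2$-shadow would therefore yield a copy of $K_{\ell+1}^{(r)+}$. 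So the $2$-shadow of $\cH'$ is $K_{\ell+1}$-free.
\item By Tur\'an's theorem applied to the shadow, combined with the standard Lagrangian bound (the Lagrangian of an $r$-graph whose $2$-shadow is $K_{\ell+1}$-free is at most that of $K_\ell^{(r)}$, with stability from Zykov symmetrization), the density bound follows.
\item Stability then follows from the shadow stability given by the Erd\H{o}s--Simonovits stability theorem.
\end{itemize}

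\textbf{Step 2 (cleaning to large minimum degree).} Let $\cH$ be extremal, so $e(\cH)\ge t_r(n,\ell)$. I repeatedly delete any vertex of degree below $(1-\varepsilon)\frac{r}{n}t_r(n,\ell)$. The usual counting shows that only $o(n)$ vertices are deleted; otherwise a smaller $K_{\ell+1}^{(r)+}$-free hypergraph would have density exceeding the asymptotic bound of Step 1. I then fix a partition $V_1,\dots,V_\ell$ of the remaining vertex set that maximizes the number of crossing hyperedges, meaning those meeting each part at most once. By Step 1 this partition has $o(n^r)$ bad hyperedges, and the parts have size $(1/\ell+o(1))n$.

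\textbf{Step 3 (no bad hyperedges).} This is the main obstacle. Suppose a hyperedge $e$ contains two vertices $u,v$ of the same part $V_1$. Using the minimum degree and the maximality of the partition, I show that each of $u$ and $v$ has an almost complete crossing link into the other parts. Otherwise moving the vertex would increase the number of crossing hyperedges, or its degree would be too small. I then choose core vertices $w_2\in V_2,\dots,w_\ell\in V_\ell$ that are typical, meaning that all pairs among $\{u,v,w_2,\dots,w_\ell\}$ other than $uv$ lie in $\Omega(n^{r-2})$ crossing hyperedges. I expand $uv$ by $e$ itself and expand the remaining pairs greedily, using disjoint fresh vertices; this is possible because each pair has many options. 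The result is a copy of $K_{\ell+1}^{(r)+}$, a contradiction. So the cleaned hypergraph is $\ell$-partite. The same embedding argument, applied to a deleted low-degree vertex together with the structure of the rest, shows that no vertices were in fact deleted. Finally, an $\ell$-partite $r$-graph has at most $t_r(n,\ell)$ edges, with equality only for the balanced complete one. This yields uniqueness.

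The delicate point is the typicality argument in Step 3 when $u$ or $v$ has an atypical link. I would handle it by first proving that every vertex has $o(n^{r-1})$ hyperedges that are bad with respect to its own part. This uses the max-cut property together with the minimum-degree condition.
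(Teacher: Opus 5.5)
The paper gives no proof of this statement. It is quoted from Pikhurko, and the input to your Step~1 is also quoted: the stability result is Theorem~\ref{lem: stability Turan expansion}, and Mubayi's bound is Theorem~\ref{thm: turan cKt}. Your outline is the standard stability route: asymptotics and stability, deletion of low-degree vertices, a max-crossing partition, then an embedding that rules out bad edges. The lower bound and Steps~1--2 are fine up to small repairs.

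\textbf{The gap is in Step~3, at the point you call delicate.} You claim that minimum degree plus maximality of the partition force every vertex to have an almost complete crossing link. Equivalently, every vertex would have only $o(n^{r-1})$ edges meeting its own part again. Without using $K_{\ell+1}^{(r)+}$-freeness, this is false.
\begin{itemize}
\item For $\ell=r=3$, let the link of $u$ be the union of the complete graphs on $V_1$ and on $V_2$. Then $d(u)=n^2/9-O(n)$ clears your threshold $(1-\varepsilon)n^2/9$. Yet $u$ has no crossing edge in any part, so moving it gains nothing.
\item A vertex whose link is the full crossing $(r-1)$-graph on all $\ell$ parts has $\Theta(n^{r-1})$ edges meeting its own part, wherever it is placed.
\end{itemize}
Only freeness excludes these configurations, and your plan does not invoke it at this stage.

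\textbf{The missing idea} is to place an arbitrary vertex $u$ in the embedding in two roles. Take the other core vertices among the typical vertices, meaning those with near-complete crossing link; all but $o(n)$ vertices are typical.
\begin{itemize}
\item As a core vertex. Suppose that in every part $V_j$ there are $\gamma n$ typical $w$ with $\operatorname{codeg}(u,w)\ge Cn^{r-3}$. Choose one such $w_j$ per part with pairwise large crossing codegrees. Then $\{u,w_1,\dots,w_\ell\}$ is the core of a $K_{\ell+1}^{(r)+}$. Hence the link of $u$ essentially avoids some part $V_{j(u)}$.
\item As padding. No edge contains two typical vertices $a,a'$ of the same part. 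Otherwise that edge expands the pair $aa'$, and $\ell-1$ typical vertices from the other parts complete the core.
\end{itemize}
Together these show that, up to $O(\gamma n^{r-1})+o(n^{r-1})$ edges, the link of $u$ consists of $(r-1)$-sets that avoid $V_{j(u)}$ and meet every other part at most once. Only at this point does the minimum degree force that link to be nearly complete. Maximality of the partition then puts $u$ in $V_{j(u)}$. Otherwise, moving $u$ there would gain the constant fraction of its link that meets its current part. So every vertex is typical, and your final embedding goes through.

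\textbf{Smaller repairs.}
\begin{itemize}
\item For $r\ge4$, a constant codegree threshold does not allow greedy expansion, since all edges through $\{x,y\}$ may contain a vertex you must avoid. Use a threshold of order $n^{r-3}$, as the paper does in Section~5; this still deletes only $O(n^{r-1})$ edges.
\item Tur\'an's theorem for the shadow does not by itself bound $e(\cH')$. Zykov's bound on the number of copies of $K_r$ in a $K_{\ell+1}$-free graph gives $e(\cH')\le t_r(n,\ell)$ directly.
\item That no vertex was deleted follows by counting, not by embedding. Each deletion raises $e-t_r$ by $\Omega(n^{r-1})$, which contradicts the $\ell$-partiteness of what remains.
\end{itemize}
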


Recently, Zhou and Yuan \cite{ZhY} considered linear Tur\'{a}n problems on expansions of graphs with bounded matching number. Gerbner, Tompkins and Zhou~\cite{gerbner2025hypergraph} considered the analogous Tur\'{a}n problems for hypergraphs with bounded matching number. In particular, they determined the asymptotics of $\ex_r(n,\{\mathcal{H},M^{(r)+}_{s+1}\})$ where $\cH$ is an arbitrary $r$-graph. Let
\begin{align*}
    \mathcal{A}(\mathcal{H})=\{\mathcal{H}[S]:S\subseteq V(\mathcal{H}(S)),E(\mathcal{H}-S)=\emptyset\}.
\end{align*}
A proper $k$-coloring of $\cH$ is a mapping from $V(\cH)$ to a set of $k $colors such that no hyperedge is monochromatic.
And for a hypergraph $\cH$, the chromatic number $\chi(\cH)$ of
$\cH$ is the minimum colors needed for proper coloring $\cH$.
\begin{theorem}[\cite{gerbner2025hypergraph}]\label{thm: zhou and Ge}
        Let $r \geq 2 $, $s \geq 1$ and $\chi(\mathcal{H})> 2$.
    \begin{align*}
        ex_r(n, \{\mathcal{H},M^{(r)+}_{s+1}\})=\binom{n}{r}-\binom{n-s}{r}-\binom{s}{r}+ ex_r(s,\mathcal{A}(\mathcal{H})).
    \end{align*} 
\end{theorem}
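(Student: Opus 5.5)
The plan is to prove the two inequalities separately, assuming throughout that $n$ is sufficiently large in terms of $r$, $s$ and $\mathcal{H}$.

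\emph{Lower bound.} Fix an $s$-set $B$. Place an extremal $\mathcal{A}(\mathcal{H})$-free $r$-graph on $B$, and add every $r$-set that meets both $B$ and $V\setminus B$. The number of edges is exactly $\binom{n}{r}-\binom{n-s}{r}-\binom{s}{r}+\ex_r(s,\mathcal{A}(\mathcal{H}))$. Every edge meets $B$, so there is no $M^{(r)+}_{s+1}$. Suppose a copy of $\mathcal{H}$ were present, and let $S'$ be the preimage of $B$. No edge of $\mathcal{H}$ lies inside $V(\mathcal{H})\setminus S'$, so $E(\mathcal{H}-S')=\emptyset$. The edges of $\mathcal{H}[S']$ are mapped into the host restricted to $B$, so that part contains a member of $\mathcal{A}(\mathcal{H})$, a contradiction.

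\emph{Upper bound: finding the cover $B$.} Let $\mathcal{G}$ be $\{\mathcal{H},M^{(r)+}_{s+1}\}$-free with at least the claimed number of edges.
\begin{enumerate}
\item Take a maximum matching. Its vertex set $U$ has $|U|\le rs$ and meets every edge.
\item Let $B\subseteq U$ be the set of vertices of degree at least $\varepsilon n^{r-1}$. Then $|B|\le s$: otherwise we could greedily build $s+1$ disjoint edges, one through each of $s+1$ vertices of $B$, since $|B|$ is bounded and vertices of such high degree can avoid the $O(n^{r-2})$ edges through any fixed bounded set.
\item Edges avoiding $B$ all meet $U\setminus B$, so there are at most $rs\varepsilon n^{r-1}$ of them. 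If $|B|=b<s$, then $e(\mathcal{G})\le \binom{n}{r}-\binom{n-b}{r}+rs\varepsilon n^{r-1}$. This is smaller than the target by roughly $n^{r-1}/(r-1)!$ for small $\varepsilon$, a contradiction. Hence $|B|=s$.
\end{enumerate}

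\emph{Upper bound: the crossing edges are nearly complete and $B$ covers everything.} Write the edge count as $e(\mathcal{G}[B])$, plus crossing edges (meeting both $B$ and $V\setminus B$), plus edges avoiding $B$. Comparing with the target shows that the number $m$ of missing crossing edges satisfies $m\le \binom{s}{r}+rs\varepsilon n^{r-1}$.

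\begin{enumerate}
\item Call $v\notin B$ \emph{good} if all crossing $r$-sets contained in $B\cup\{v\}\cup X$ are present for most bounded sets $X$. With $m=o(n^{r-1})$, all but $o(n)$ vertices are good, and one can choose many disjoint bounded families of good vertices with fully present crossing edges among them.
\item If some edge $e$ avoided $B$, take $s$ disjoint crossing edges, each containing exactly one vertex of $B$ and otherwise only good vertices outside $e$. Together with $e$ these form an $M^{(r)+}_{s+1}$. So no edge avoids $B$.
\item Now $e(\mathcal{G})=e(\mathcal{G}[B])+\binom{n}{r}-\binom{n-s}{r}-\binom{s}{r}-m$. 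The target then forces $m\le e(\mathcal{G}[B])-\ex_r(s,\mathcal{A}(\mathcal{H}))\le\binom{s}{r}$, a constant.
\end{enumerate}

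\emph{Upper bound: finishing.} Assume $\mathcal{G}[B]$ contains some $\mathcal{H}[S']\in\mathcal{A}(\mathcal{H})$ on a set $S'\subseteq B$.
\begin{enumerate}
\item Every edge of $\mathcal{H}$ not inside $S'$ meets $S'$ and has vertices outside $S'$, so its image is a crossing edge.
\item Partition $V\setminus B$ into $\Omega(n)$ disjoint blocks of size $|V(\mathcal{H})\setminus S'|$.
\item In each block, mapping $V(\mathcal{H})\setminus S'$ bijectively onto the block (with $S'$ already in $B$) gives a copy of $\mathcal{H}$ unless some needed crossing edge is missing.
\item The blocks are disjoint, so these missing edges are distinct. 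Hence $m=\Omega(n)$, contradicting $m\le\binom{s}{r}$.
\end{enumerate}
So $\mathcal{G}[B]$ is $\mathcal{A}(\mathcal{H})$-free, $e(\mathcal{G}[B])\le \ex_r(s,\mathcal{A}(\mathcal{H}))$, and the upper bound follows.

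The main obstacle is the middle part of the upper bound: pinning $|B|=s$ and upgrading ``few missing edges'' from $o(n^{r-1})$ to a constant. This needs the good-vertex cleaning, so that both the matching extension (no edges avoiding $B$) and the disjoint-block embedding go through.
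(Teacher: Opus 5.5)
The paper gives no proof of this statement. It is quoted from \cite{gerbner2025hypergraph} as background, and the printed version even drops the hypothesis that $n$ is sufficiently large. You are right to add it. For $r=2$, $\mathcal{H}=K_3$, $s=6$, $n=8$ the right-hand side equals $28-1-15+0=12$, while $K_{4,4}$ is triangle-free with no $M_7$.

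So your proposal has to stand on its own, and it does. The following steps are all sound:
\begin{itemize}
\item the lower-bound construction;
\item the bound $|B|\le s$ and the counting that forces $|B|=s$;
\item the exact identity $e(\mathcal{G})=e(\mathcal{G}[B])+\binom{n}{r}-\binom{n-s}{r}-\binom{s}{r}-m$ once no edge avoids $B$;
\item the disjoint-block embedding, which gives $m\ge\lfloor (n-s)/|V(\mathcal{H})\setminus S'|\rfloor>\binom{s}{r}$.
\end{itemize}

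\textbf{The good-vertex step.} This cleaning is unnecessary, and as written it is unjustified. At that stage you only know $m\le\binom{s}{r}+rs\varepsilon n^{r-1}$, which is not $o(n^{r-1})$ for a fixed $\varepsilon$. To exclude an edge $e$ with $e\cap B=\emptyset$, just reuse the greedy argument behind $|B|\le s$. Each $b\in B$ has degree at least $\varepsilon n^{r-1}$. Only $O(n^{r-2})$ edges through $b$ meet the bounded set formed by $e$, $B\setminus\{b\}$ and the vertices already used. So you obtain $s$ disjoint edges through $B$ avoiding $e$, i.e.\ a copy of $M^{(r)+}_{s+1}$. After that, $m\le\binom{s}{r}$ follows by exact counting, and the block argument needs only disjointness of the blocks. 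What you call the main obstacle is therefore immediate.

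\textbf{Where $\chi(\mathcal{H})>2$ enters.} You should say this explicitly. If some vertex cover $S$ of $\mathcal{H}$ spanned no edge, colouring $S$ and $V(\mathcal{H})\setminus S$ differently would be a proper $2$-colouring. So every member of $\mathcal{A}(\mathcal{H})$ has an edge. This is exactly what makes the empty $r$-graph on $B$ admissible and $\ex_r(s,\mathcal{A}(\mathcal{H}))$ well defined. Hence your lower-bound construction makes sense only because of this hypothesis.

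\textbf{Degenerate case.} Treat $S'=V(\mathcal{H})$ (blocks of size $0$) separately. In that case $\mathcal{G}[B]$ already contains $\mathcal{H}$, which is an immediate contradiction.
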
 
In the same paper, they proposed the following conjecture.
\begin{conjecture}[\cite{gerbner2025hypergraph}]\label{conj: zhou and Ge}
    For a graph $G$ with $\chi(G)=\ell >r$ and assume that there is an independent set $U$ of $G$ such that deleting $U$ from $G$ results in a graph $G_0$ if chromatic number $\ell-1$, and there are two color classes of $G_0$ such that there are $m'(G)$ edges between them. If $s$ is sufficiently large, then
    $$\ex_r(n,\{G^{(r)+},M_{s+1}^{r+}\})=(m(G')-1)\binom{n}{r-1}+(s-m'(G)+1)\binom{\ell-2}{r-1}\left(\frac{n}{\ell-2}\right)^{r-1}+o(n^{r-1}).$$
\end{conjecture}

We also mention a recent hypergraph analogue of the Alon--Frankl theorem due to Yang, Zeng, and Zhang~\cite{yang2025hypergraph}. 
Let $\cK_{\ell+1}^r$ denote the family of $r$-graphs $\cF$ with at most $\binom{\ell+1}{2}$ edges such that, for some $(\ell+1)$-set $K$, every pair $\{x,y\}\subseteq K$ is covered by an edge of $\cF$.
\begin{theorem}[\cite{yang2025hypergraph}]\label{thm:YYZZ-KrMs}
Fix integers $\ell \ge r \ge 3$ and $s\ge 1$. For sufficiently large $n$,
\[
\ex_r\left(n,\{\cK_{\ell+1}^r, M_{s+1}^{(r)+}\}\right)= s \cdot t_{r-1}(n-s,\ell-1).
\]
Moreover, the unique extremal $r$-graph is obtained by fixing an $s$-set $V_0$ and taking all $r$-edges that contain exactly one vertex from $V_0$ and whose remaining $r-1$ vertices form an edge of the complete balanced $(\ell-1)$-partite $(r-1)$-graph on the other $n-s$ vertices.
\end{theorem}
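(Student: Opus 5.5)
The plan is to prove matching lower and upper bounds. For the upper bound I would reduce to a Turán problem for the links of $s$ high-degree vertices, then use a stability argument to get the exact value.

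\textbf{Lower bound.} Fix an $s$-set $V_0$ and let $\cH_0$ be the $r$-graph described in the statement. Every edge of $\cH_0$ meets $V_0$ in exactly one vertex, so $\nu(\cH_0)\le s$. Suppose some $(\ell+1)$-set $K$ has all its pairs covered by edges of $\cH_0$. No edge contains two vertices of $V_0$, so $|K\cap V_0|\le 1$. Hence at least $\ell$ vertices of $K$ lie outside $V_0$, and two of them fall into the same part of the $(\ell-1)$-partition. An edge covering that pair would have two vertices in one part, which is impossible in $\cH_0$. So $\cH_0$ is $\cK^r_{\ell+1}$-free, and $e(\cH_0)=s\cdot t_{r-1}(n-s,\ell-1)$.

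\textbf{Upper bound, structure.} Let $\cH$ be extremal, so $e(\cH)\ge s\cdot t_{r-1}(n-s,\ell-1)=\Theta(n^{r-1})$.
\begin{enumerate}
\item Let $S$ be the set of vertices of degree at least $\varepsilon n^{r-1}$. Any $s+1$ such vertices can be greedily extended to disjoint edges, so $|S|\le s$.
\item Let $s'=|S|$. The same greedy argument shows that $\cH-S$ has matching number at most $s-s'$. Since all its vertices have degree $<\varepsilon n^{r-1}$, its at most $r(s-s')$-vertex cover gives $e(\cH-S)\le r s\,\varepsilon n^{r-1}$.
\item If $s'<s$, then $e(\cH)\le s'\binom{n}{r-1}$-type bounds together with the link bound below give at most $(s-1)t_{r-1}(n,\ell-1)+O(\varepsilon n^{r-1})$, which is too small. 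So $|S|=s$.
\item With $|S|=s$, any edge avoiding $S$ could be added to a greedy matching through $S$, giving a matching of size $s+1$. Hence every edge meets $S$.
\end{enumerate}

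\textbf{Upper bound, links.} For $v\in S$, let $L_v$ be the $(r-1)$-graph on $V\setminus S$ consisting of the traces of edges meeting $S$ only in $v$.
\begin{itemize}
\item If $L_v$ contained a member of $\cK^{r-1}_{\ell}$ on an $\ell$-set $K'$, then adding $v$ to those edges covers all pairs of $K'\cup\{v\}$. This uses at most $\binom{\ell}{2}\le\binom{\ell+1}{2}$ edges, so it is a copy of a member of $\cK^r_{\ell+1}$.
\item Hence $L_v$ is $\cK^{r-1}_\ell$-free. The Turán number of this family is $t_{r-1}(n-s,\ell-1)$ for large $n$, with $\cT_{r-1}(n-s,\ell-1)$ the unique extremal graph, stable in the usual sense. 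This holds because $\ell-1\ge r-1$. For $r=3$ it is Turán's theorem, and in general it is the Frankl–Füredi/Pikhurko-type result (cf.\ Theorem~\ref{Pikhurko}).
\item Let $\mathcal{E}_2$ be the set of edges meeting $S$ in at least two vertices. Then $e(\cH)\le s\cdot t_{r-1}(n-s,\ell-1)+|\mathcal{E}_2|$ with $|\mathcal{E}_2|=O(n^{r-2})$. By stability, each $L_v$ is $o(n^{r-1})$-close to a complete balanced $(\ell-1)$-partite $(r-1)$-graph.
\end{itemize}

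\textbf{Main obstacle: exactness.} The hard step is to show that $\mathcal{E}_2=\emptyset$ and that every $L_v$ is exactly Turán. I would do it in two steps.
\begin{enumerate}
\item Show that all the near-partitions of the $L_v$ nearly coincide. If $L_u$ and $L_v$ had substantially different partitions, one could find $\ell+1$ vertices with all pairs covered using $u$ and $v$ together, or show $|L_u|+|L_v|$ is too small.
\item Take an edge $e\supseteq\{u,v\}$ with $u,v\in S$. Choose $\ell-1$ typical vertices, one in each part and outside $e$. Use the dense, nearly complete links $L_u$ and $L_v$ to cover all pairs among them and to $u$ and $v$, and use $e$ for the pair $uv$. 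This gives an $(\ell+1)$-set covered by at most $\binom{\ell+1}{2}$ edges, a contradiction. So $\mathcal{E}_2=\emptyset$.
\end{enumerate}
Then $e(\cH)=\sum_v|L_v|\le s\cdot t_{r-1}(n-s,\ell-1)$. Equality forces each $L_v\cong\cT_{r-1}(n-s,\ell-1)$, and step 1 forces a common partition, which is the claimed uniqueness.
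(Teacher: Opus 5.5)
The paper gives no proof of this statement. It is quoted from \cite{yang2025hypergraph} as background, so there is no in-paper argument to match. Judged on its own, your proposal is correct:
\begin{itemize}
\item The lower bound is right.
\item The reduction to $|S|=s$ with every edge meeting $S$ works once $rs\varepsilon n^{r-1}$ is a small fraction of $t_{r-1}(n,\ell-1)=\Theta(n^{r-1})$.
\item Each $L_v$ is $\cK^{r-1}_\ell$-free, since adding $v$ to link edges covering an $\ell$-set also covers every pair $vx$.
\item Theorem~\ref{thm: turan cKt}, applied with parameters $r-1$ and $\ell-1$ (valid since $\ell-1\ge r-1\ge 2$), gives $|L_v|\le t_{r-1}(n-s,\ell-1)$ for every $n$.
\end{itemize}

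Two points can be tightened. First, killing $\mathcal{E}_2$ needs neither stability nor alignment of partitions, because members of $\cK^r_{\ell+1}$ only require pairs to be covered, with no disjointness. Each $L_v$ has only $O(1)$ isolated vertices, since its size is within $O(n^{r-2})$ of $t_{r-1}(n-s,\ell-1)$. So take any $(\ell-1)$-set $W$ whose pairs are covered by $L_u$ and which avoids the isolated vertices of $L_v$. Then $\{u,v\}\cup W$ is covered, using $e$ for the pair $uv$. Second, uniqueness needs the partitions of the $L_v$ to coincide exactly, not nearly, so the first step of your exactness argument as phrased is not what is actually used. The exact statement follows from a single pair. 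Suppose $x,y$ lie in a common part of $L_u\cong\cT_{r-1}(n-s,\ell-1)$ but in different parts of $L_v$. Since $\ell-1\ge r-1$, some edge of $L_v$ covers $xy$. Then $\{u,x,y\}$ together with one vertex from each remaining part of $L_u$ is a covered $(\ell+1)$-set. In fact stability is never needed in your proof.

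The closest argument in the paper is the proof of Theorem~\ref{large s}, for the expansion $K^{(r)+}_{\ell+1}$. It has the same skeleton:
\begin{itemize}
\item a high-degree set $V_0$, with threshold $r(s+1)n^{r-2}$ via Lemma~\ref{lem: size of V0} instead of your $\varepsilon n^{r-1}$;
\item links bounded by Theorem~\ref{thm: turan cKt};
\item stability, plus aligned partitions via Lemma~3.5 of \cite{yang2025hypergraph}.
\end{itemize}
There, however, a link is not automatically $\cK^{r-1}_\ell$-free, because an expansion needs $\binom{\ell+1}{2}$ distinct edges. One must first delete hyperedges through sparse pairs (Claim~\ref{claim: no form cliques}). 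That loses order $n^{r-1}$ edges, which is affordable only for large $s$. A rainbow-type missing-multiplicity analysis, as in Theorem~\ref{thm: k large rainbow}, is then needed to recover exactness.

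Your route is short and works for every $s\ge 1$ precisely because in the $\cK$-family one vertex of $V_0$ and one link edge can cover many pairs at once. This is also why the analogous statement fails for expansions when $s$ is small (Theorem~\ref{thm: small-s}(i)).
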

If $\cK_{\ell+1}^r$ is replaced by $K_{\ell+1}^{(r)+}$, then it is a directly extension of Theorem \ref{thm1}. 
And notice that $\chi(K_{\ell+1}^{(r)+})=2$, Theorem \ref{thm: zhou and Ge} can not work.
In particular,Yang, Zeng and Zhang \cite{yang2025hypergraph} proposed the following conjecture. 

\begin{conjecture}[\cite{yang2025hypergraph}]\label{conj:4.1}
Let $\ell \ge r \ge 3$ and $s \ge \binom{\ell}{2}$ be integers. For sufficiently large $n$, we have
$$\mathrm{ex}_r\left(n,\{K_{\ell+1}^{(r)+},M^{(r)+}_{s+1}\}\right) = s \cdot t_{r-1}(n-s,\ell-1).
$$
\end{conjecture}

In this paper, we first establish the following exact result for all small $s$, which disproves Conjecture \ref{conj:4.1} for $s<\frac{\ell^2-1}{2}$. 

\begin{theorem}\label{thm: small-s}
Fix integers $s$ and $\ell\geq r\geq3$. Let $n$ be sufficiently large.

\noindent
\textbf{(i)} If $\ell>r$ and $\binom{\ell}{2}\leq s<\frac{\ell^2-1}{2}$, then
\begin{eqnarray*}
\ex_r(n,\{K_{\ell+1}^{(r)+},M^{(r)+}_{s+1}\})=\binom{\ell}{2}\binom{n-\binom{\ell}{2}}{r-1},
\end{eqnarray*}
\noindent
\textbf{(ii)} If $\ell>r$ and $\binom{\ell-1}{2}+r\leq s<\binom{\ell}{2}$, then
\begin{eqnarray*}
\ex_r(n,\{K_{\ell+1}^{(r)+},M^{(r)+}_{s+1}\})=s\binom{n-s}{r-1},
\end{eqnarray*}
\noindent
\textbf{(iii)} If $2+\binom{\ell-1}{2}\leq s< \binom{\ell-1}{2}+r$ and $\ell\geq 2r+1$, then
\begin{eqnarray*}
\ex_r(n,\{K_{\ell+1}^{(r)+},M^{(r)+}_{s+1}\})= s\binom{n-s}{r-1}+\sum_{i=s-\binom{\ell-1}{2}+1}^{r}\binom{s-1}{i-1}\binom{n-s}{r-i}. 
\end{eqnarray*}

\noindent
\textbf{(iv)} If $2\leq t\leq \ell-2$ and $\ell+1-t+\binom{t}{2}\leq s< \ell-t+\binom{t+1}{2}$, then
\begin{eqnarray*}
\ex_r(n,\{K_{\ell+1}^{(r)+},M^{(r)+}_{s+1}\})= s\binom{n-s}{r-1}+t_2(s,\ell-t)\binom{n-s}{r-2}+O(n^{r-3}),
\end{eqnarray*}

\noindent
\textbf{(v)} If $s< \ell$, then
\begin{eqnarray*}
\ex_r(n,\{K_{\ell+1}^{(r)+},M^{(r)+}_{s+1}\})= \binom{n}{r}-\binom{n-s}{r}.
\end{eqnarray*}
\end{theorem}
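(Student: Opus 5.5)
Let $\cH$ be an $n$-vertex $r$-graph with no $M^{(r)+}_{s+1}$ and no $K_{\ell+1}^{(r)+}$, and suppose $e(\cH)\ge c\,n^{r-1}$ (otherwise there is nothing to prove). I will prove all five parts through one structural reduction and then handle each range of $s$ separately.

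\emph{Structural reduction.} Take a maximum matching of $\cH$. It has at most $s$ edges, so its vertex set $V_0$ has at most $rs$ vertices and meets every hyperedge. Call a vertex $v\in V_0$ \emph{heavy} if $\deg(v)\ge \varepsilon n^{r-1}$. A standard augmenting argument shows there are at most $s$ heavy vertices. A heavy vertex has a huge link, so any $s'$ heavy vertices can be matched greedily using disjoint edges outside $V_0$. Let $S$ be the set of heavy vertices and put $p=|S|$. The hyperedges that avoid $S$ number only $O(n^{r-2})$. From the matching condition one then gets, for sufficiently large $n$, $p\le s$, and moreover every hyperedge disjoint from $S$ lies inside a bounded set $W$ with $|W|=O(1)$. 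Since heavy vertices can always be matched via their links, the condition $M_{s+1}$-free forces $\nu(\cH-S)\le s-p$.

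\emph{Reduction to a rainbow problem.} The lower bounds come from the natural constructions.
\begin{itemize}
\item In (i), take $\binom{\ell}{2}$ heavy vertices and all edges meeting them in exactly one vertex. This has no $M_{s+1}$ because $s\ge\binom{\ell}{2}$. It has no $K_{\ell+1}^{(r)+}$ because that expansion has $\binom{\ell+1}{2}$ edges, each needing its own heavy vertex once the core vertices are light.
\item In (ii)--(iv), take $S$ with $|S|=s$ and let $\cH[S]$ together with the edges meeting $S$ twice be restricted so that the "$2$-shadow" on $S$ is $K_{\ell-t+1}$-free. In (iv) this gives $t_2(s,\ell-t)$ pairs, each extended by $\binom{n-s}{r-2}$ edges.
\item (v) is the full construction $\binom{n}{r}-\binom{n-s}{r}$. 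It is $K_{\ell+1}^{(r)+}$-free when $s<\ell$, because each of the $\binom{\ell+1}{2}$ edges of the expansion must meet $S$, yet at least two core vertices lie outside $S$. More precisely, the core-vertex pairs outside $S$ would need edges through $S$, and counting shows that $s<\ell$ leaves too few.
\end{itemize}

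For the upper bound, the key observation is that the links $L(v)$, $v\in S$, are $(r-1)$-graphs on the light vertices, each with $\Theta(n^{r-1})$ edges. A copy of $K_{\ell+1}^{(r)+}$ can be built by choosing $q$ core vertices in $S$ and $\ell+1-q$ light core vertices, with each pair realized either by an edge through two core vertices or by a distinct heavy vertex. So counting the edges that meet $S$ in $i\ge2$ vertices reduces to a rainbow Tur\'an problem: the family $\{L(v)\}$, together with the lower-order structure on $S$, must contain no rainbow copy of the expansion of a clique. This is where I invoke the rainbow hyper-Tur\'an result announced in the abstract, which extends Keevash--Saks--Sudakov--Verstra\"ete. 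It says that the sum of sizes is maximized either by giving everything to few colors or by a Tur\'an-type structure. Optimizing over $p$ and over the graph of pairs inside $S$ gives exactly the case thresholds of the statement.
\begin{itemize}
\item For $s<(\ell^2-1)/2$, using $\binom{\ell}{2}$ full stars beats using $s$ restricted stars.
\item In ranges (ii)--(iii), the extra terms $\binom{s-1}{i-1}\binom{n-s}{r-i}$ count edges meeting $S$ in $i\ge s-\binom{\ell-1}{2}+1$ vertices, all containing one fixed vertex.
\end{itemize}

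\emph{Main obstacle.} The hardest step is the exact comparison between competing configurations near the thresholds, and the claim that no mixed configuration, with some light vertices of moderate degree, does better. I would handle this with a stability argument. First show that $\cH$ is close to one of the candidate structures. Then clean it up vertex by vertex: any light vertex of degree $\Omega(n^{r-2})$, or any extra pair inside $S$, either creates a rainbow clique expansion together with the heavy links, or completes an $(s+1)$-matching. Either way it is strictly worse, and this yields the exact values in (i)--(iii) and (v) and the $O(n^{r-3})$ error term in (iv).
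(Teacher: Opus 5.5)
Your outline agrees with the paper on (ii), (iv) and (v): clique-building to exclude edges meeting the heavy set twice, Tur\'an's theorem for the pair graph, and the Erd\H{o}s matching bound. The step that is supposed to carry (i), however, is mis-specified. The same oversight appears in your lower-bound check, which only treats copies whose core vertices are all light.

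The configuration to forbid in the links $\{L(v)\}_{v\in S}$ is neither a rainbow $K_\ell^{(r-1)+}$ nor a rainbow $K_{\ell+1}^{(r-1)+}$. The $\binom{\ell}{2}$ complete links of the extremal example do contain a rainbow $K_\ell^{(r-1)+}$, so forbidding it would cap $\sum_v|L(v)|$ at $(\binom{\ell}{2}-1)\binom{n}{r-1}$, which is below the answer. A rainbow $K_{\ell+1}^{(r-1)+}$ needs $\binom{\ell+1}{2}>s$ colours, so forbidding it gives only the trivial bound $s\binom{n}{r-1}$. The tight obstruction is your $q=1$ case: a rainbow $K_\ell^{(r-1)+}$ in the links whose colours avoid a heavy vertex $v^*$ that is heavily joined to all $\ell$ light core vertices.

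The paper handles this with two ingredients:
\begin{enumerate}
\item a ``super rainbow'' strengthening: if $\sum_v|L(v)|>\binom{\ell}{2}\binom{n}{r-1}$, then for every colour there is a rainbow $K_\ell^{(r-1)+}$ avoiding that colour;
\item a pigeonhole argument producing a common heavy neighbour $v^*$ of the $\ell$ core vertices, which is where $s<\frac{\ell^2-1}{2}$ is used (each light vertex is heavily joined to all but fewer than $\frac{\ell-1}{2}$ heavy vertices).
\end{enumerate}
So the thresholds do not come from ``optimizing over $p$''. Comparing $\binom{\ell}{2}$ full stars with $s$ Tur\'an-type stars favours the full stars well beyond $\frac{\ell^2-1}{2}$.

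For the exact value you also need a stability version of the rainbow bound, pinning the links down to exactly $\binom{\ell}{2}$ nearly complete ones on a set $V'$. After that, three facts remain to be shown:
\begin{enumerate}
\item no edge meets $V'$ twice;
\item no edge avoiding $V'$ contains two light vertices heavily joined to all of $V'$, since such an edge substitutes for the missing $\binom{\ell}{2}$-th colour in the $q=1$ configuration;
\item a bounded exceptional set contributes nothing, which needs a degree argument for $r\ge4$ and the matching bound for $r=3$.
\end{enumerate}
Your generic ``clean up vertex by vertex'' does not supply these.

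Part (iii) has no argument, and your pair-graph framework cannot produce it. In the (iii) construction every pair of $A$ lies in an edge containing $u$ and at least $s-\binom{\ell-1}{2}\ge2$ further vertices of $A$, so the $2$-shadow on $A$ is complete. The paper's route has three steps:
\begin{enumerate}
\item Edges meeting the heavy set in $i$ vertices with $2\le i\le s-\binom{\ell-1}{2}$ are excluded by clique-building, since at least $\binom{\ell-1}{2}$ heavy vertices remain outside such an edge.
\item For larger $i$, the $i$-uniform trace hypergraph on the heavy set is $K_3^{(i)+}$-free. A triangle there, plus a $K_{\ell-2}^{(r-1)+}$ in the common links, gives a $K_{\ell+1}^{(r)+}$; this is where $\ell\ge2r+1$ enters.
\item An exact triangle theorem with the star as unique extremal family (Mubayi--Verstra\"{e}te) forces a common vertex $u$, which is then propagated to all larger intersections.
\end{enumerate}

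Finally, your structural reduction claims more than the matching condition gives. With threshold $\varepsilon n^{r-1}$, the edges avoiding $S$ are only $O(\varepsilon n^{r-1})$, not $O(n^{r-2})$; the paper uses the threshold $r(s+1)n^{r-2}$ for this. When $p<s$, as in (i), nothing so far confines these edges to a bounded set $W$: all $r$-sets through a fixed pair of light vertices form an intersecting family. Ruling them out there is part of the work described above.
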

Moreover, we determine the exact value of $\mathrm{ex}_r(n,\{K_{\ell+1}^{(r)+},M^{(r)+}_{s+1}\})$ for large $s$, which implies that Conjecture~\ref{conj:4.1} holds for large $s$.

\begin{theorem}\label{large s}
    For integers $r\geq3$, $\ell\geq r$, there exists $s_0=s_0(\ell,r)$ such that for $s\geq s_0(\ell,r)$ and sufficiently large $n$, we have
    $$\ex_r(n,\{K_{\ell+1}^{(r)+},M^{(r)+}_{s+1}\})=s\cdot t_{r-1}(n-s,\ell-1).$$
\end{theorem}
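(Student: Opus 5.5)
The lower bound comes from the construction in Theorem~\ref{thm:YYZZ-KrMs}. Fix an $s$-set $V_0$ and split the remaining $n-s$ vertices into $\ell-1$ balanced parts $V_1,\dots,V_{\ell-1}$. Take every $r$-edge consisting of one vertex of $V_0$ together with a transversal $(r-1)$-set of $(V_1,\dots,V_{\ell-1})$.

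\emph{Matching-freeness.} Every edge meets $V_0$ in exactly one vertex, so any matching has at most $s$ edges.

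\emph{$K_{\ell+1}^{(r)+}$-freeness.} Consider the $\ell+1$ core vertices of a copy. At most $\ell-1$ of them lie outside $V_0$ in distinct parts, so two core vertices either lie in the same part $V_i$ or lie in $V_0$.
\begin{itemize}
\item Two core vertices in the same $V_i$ cannot share an edge, since edges are transversal outside $V_0$.
\item Two core vertices in $V_0$ cannot share an edge, since each edge meets $V_0$ only once.
\end{itemize}
So the construction is free of both forbidden graphs and has $s\cdot t_{r-1}(n-s,\ell-1)$ edges.

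For the upper bound, let $\cH$ be extremal. The first step is to locate a small cover. Since $\cH$ has no $M^{(r)+}_{s+1}$, a maximal matching spans at most $rs$ vertices and meets every edge. The lower bound gives $e(\cH)=\Theta(n^{r-1})$, so by a standard degree-counting argument there is a set $S$ of size at most $s$ of vertices of linear-order degree $\Omega(n^{r-1})$. Moreover, all but $O(n^{r-2})$ edges meet $S$: the edges avoiding $S$ have their vertices among low-degree vertices of the $rs$-set, and there are only $O(n^{r-2})$ of them.

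For $v\in S$, consider the link $\cL_v$, the $(r-1)$-graph on $V\setminus S$. The key reduction is a lemma that converts the $K_{\ell+1}^{(r)+}$-freeness of $\cH$ into a rainbow condition on the links. Because the vertices of $S$ have huge degree, one can greedily extend edges with fresh vertices. If $\cL_{v_1},\dots,\cL_{v_k}$ contain a rainbow copy of $K_{\ell+1-j}^{(r-1)+}$ together with suitable pairs, one would try to embed $K_{\ell+1}^{(r)+}$ by using $j$ vertices of $S$ as core vertices, with distinct expansion edges taken from distinct links.

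In the regime $s\ge s_0$, I would apply the rainbow hyper-Turán theorem for clique expansions, the key tool announced in the abstract. For $k=|S|$ large, the maximum of $\sum_i e(\cL_i)$ over families with no rainbow $K_{\ell}^{(r-1)+}$ is $k\cdot t_{r-1}(n,\ell-1)$, attained when all the links are the same Turán graph. This is the hypergraph analogue of Keevash–Saks–Sudakov–Verstraëte, which holds for many colours. The count of edges meeting $S$ is then at most $\sum_{v\in S} e(\cL_v)$ plus edges meeting $S$ in at least two vertices, and the latter number $O(n^{r-2})$. Hence $e(\cH)\le s\,t_{r-1}(n-s,\ell-1)+O(n^{r-2})$.

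To obtain the exact result, I would run a stability argument. Near-extremality forces $|S|=s$ and forces every $\cL_v$ to be close to one common $(\ell-1)$-partite Turán $(r-1)$-graph; here I would use stability for Theorem~\ref{Pikhurko} and its rainbow version. I would then show that there are no edges with two vertices in $S$, no edges missing $S$, and no non-transversal link edges. Each such edge, combined with the rich common partite structure, yields either an $M_{s+1}^{(r)+}$ (for edges avoiding $S$, using the degrees of the vertices in $S$) or a $K_{\ell+1}^{(r)+}$. In the latter case one uses two vertices of $S$ as cores for an edge containing both, or one uses an edge inside a part; either can be completed to a clique of size $\ell+1$ using the near-complete partite links. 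This cleanup is standard once the cost of a missing link edge is compared with the gain.

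The main obstacle is the rainbow reduction together with the largeness of $s$. The rainbow Turán bound $k\cdot t_{r-1}(n,\ell-1)$ is only valid once $k$ is large relative to $\ell$ and $r$; for small $k$, a few colours can do better, as Theorem~\ref{thm: small-s} shows. Hence $s_0$ is needed. Proving that rainbow theorem itself would be the hard step: I would attempt to mimic the KSSV argument, reducing to the case where all hypergraphs are equal via a symmetrization or weight-shifting step and then applying Pikhurko's theorem.
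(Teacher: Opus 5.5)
\textbf{There is a genuine gap in your reduction step.} Your lower-bound construction and its verification are correct. Your covering set $S$ is essentially the paper's $V_0$ from Lemma~\ref{lem: size of V0}. The gap is the ``key reduction'': $K_{\ell+1}^{(r)+}$-freeness of $\cH$ does \emph{not} make the links $\{\cL_v\}_{v\in S}$ rainbow $K_\ell^{(r-1)+}$-free.

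A rainbow copy with cores $w_1,\dots,w_\ell$ and colours $v_{ij}\in S$ only gives a $K_\ell^{(r)+}$ in $\cH$. To reach $K_{\ell+1}^{(r)+}$ you need one more core, for instance some $v^*\in S\setminus\{v_{ij}\}$ with every pair $\{v^*,w_i\}$ of large codegree. Nothing supplies it; the ``suitable pairs'' you allude to are exactly what is missing.

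Here is a concrete example. To the extremal construction, add $\ell$ new vertices $w_1,\dots,w_\ell$ and the edges $\{v_{ij},w_i,w_j\}\cup R_{ij}$. Here the $v_{ij}\in V_0$ are distinct and the $R_{ij}$ are pairwise disjoint transversal $(r-3)$-sets (empty when $r=3$).
\begin{itemize}
\item Each $w_i$ has degree $\ell-1$, so it cannot be a core vertex.
\item In any copy of $K_{\ell+1}^{(r)+}$, a new edge's core pair therefore lies in $\{v_{ij}\}\cup R_{ij}$. That pair has large codegree in the base, so the copy can be rerouted through base edges.
\end{itemize}
The hypergraph is thus still $\{K_{\ell+1}^{(r)+},M_{s+1}^{(r)+}\}$-free, yet its links contain a rainbow $K_\ell^{(r-1)+}$.

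Some preprocessing is therefore unavoidable. The natural choices (deleting edges through low-codegree pairs, or deleting low-degree vertices) only bound the loss by $C(\ell,r)n^{r-1}$, not $O(n^{r-2})$. So your inequality $e(\cH)\le s\,t_{r-1}(n-s,\ell-1)+O(n^{r-2})$ is not established. The exact value must then come entirely from the stability and cleanup stage you call standard, which is the bulk of the paper's proof.

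\textbf{How the paper avoids this.} It never invokes a rainbow statement at this point. It deletes the at most $O_{\ell,r}(n^{r-1})$ edges containing a pair of codegree below $r\binom{\ell+1}{2}n^{r-3}$. The remaining $\cG'$ contains no $\cK_{\ell+1}^r$, since all its pairs have large codegree and such a configuration could be greedily blown up into $K_{\ell+1}^{(r)+}$. Hence each single link $\cL_{\cG'}(v)$ is $\cK_\ell^{r-1}$-free, and Theorem~\ref{thm: turan cKt} gives $|\cL_{\cG'}(v)|\le t_{r-1}(n,\ell-1)$ for every $v$. Here $v$ itself serves as the extra core, which is precisely what your reduction lacks.

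Because the deleted amount does not grow with $s$, for $s\ge s_0$ all but $O(1/\delta)$ links are $\delta n^{r-1}$-close to extremal. The paper then proceeds as follows.
\begin{itemize}
\item Theorem~\ref{lem: stability Turan expansion}, applied to $\binom{\ell+1}{2}$ of these near-extremal links, yields a common $(\ell-1)$-partition.
\item It shows that no edge meets a part twice or contains two of these near-extremal vertices.
\item Exceptional vertices are handled by the missing-versus-extra multiplicity exchange borrowed from the proof of Theorem~\ref{thm: k large rainbow}.
\end{itemize}
This is where the largeness of $s$ actually enters, rather than through the threshold $k_0$ of the rainbow theorem.
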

Notice that for $K_{\ell+1}$, we have $m'(K_{\ell+1})=1$, where $m'(K_{\ell+1})$ is defined as in Conjecture \ref{conj: zhou and Ge}.
Thus, Theorem \ref{large s} confirms Conjecture \ref{conj: zhou and Ge} for then case when $G$ is a clique.

One of the key steps  in the proof of Theorem \ref{thm: small-s} and Theorem \ref{large s} is to determine the rainbow Tur\'{a}n number for expansion of cliques. 
Before stating the result, we introduce several notations. 

For a series of $r$-graphs $\cH_1,\dots,\cH_k$ on the same vertex set $V$, for every $r$-set $E\subseteq V$, let the multiplicity of $E$ (denoted by $m(E)$) be the number of $r$-graphs among $\cH_1,\dots,\cH_k$ that contain $E$ as a hyperedge.
Let $\cG$ denote the $r$-graph on the same vertex set $V$ whose hyperedges are all $r$-sets with multiplicity at least one.

We say that $r$-graphs $\cH_1,\dots,\cH_k$ on the same vertex set of order $n$ contain \textit{rainbow} copy of $\cF$ if their union contains a copy of $\cF$ with each hyperedge belonging to a distinct $\cH_i$. 

For a $r$-graph $\cF$, the \textit{rainbow hyper-Tur\'an number} of $\cF$, which is denoted by $\exsumr(n,k,\cF)$, is the maximum value of $\sum_{i=1}^ke(\cH_i)$, among all $n$-vertex $r$-graphs $\cH_1,\dots, \cH_k$ that do not contain a rainbow copy of $\cF$.
When $r=2$, we write $\ex^{\sum}(n,k,H)$ instead of $\ex^{\sum}_2(n,k,H)$ for simplicity.
When $r=2$, Keevash, Saks, Sudakov and Verstra{\"e}te \cite{keevash2004multicolour} gave the following result concerning rainbow copies of cliques.
\begin{theorem}[\cite{keevash2004multicolour}]\label{thmL sudakov rainbow}
For $\ell \geq 2$, $k\geq \binom{r}{2}$ and sufficiently large $n$, we have when $\frac{\ell^2-1}{2}\leq k$,
$$\ex^{\sum}(n,k,K_\ell)=k\cdot t_r(n,\ell-1),$$
and when $\binom{r}{2}\leq k<\frac{\ell^2-1}{2}$,
$$\ex^{\sum}(n,k,K_\ell)= \left(\binom{r}{2}-1\right)\binom{n}{2}.$$
\end{theorem}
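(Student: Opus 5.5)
The lower bounds come from two constructions; the upper bound from a multicolour regularity reduction, a finite weighted optimisation, and a stability cleanup. The statement as printed mixes the letters $r$ and $\ell$. I read it as forbidding a rainbow $K_t$ among $k$ graphs, with threshold $\frac{t^2-1}{2}$ and the two candidate values $k\cdot t_2(n,t-1)$ and $\left(\binom{t}{2}-1\right)\binom{n}{2}$. Put $m=\binom{t}{2}$.

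\textbf{Lower bounds.} First, let every $\mathcal{H}_i$ be the Tur\'an graph $T(n,t-1)$. The union is $K_t$-free, so there is no rainbow $K_t$, and the total is $k\cdot t_2(n,t-1)$. Second, let $m-1$ of the graphs be $K_n$ and the rest empty. A rainbow $K_t$ needs $m$ distinct colours, so none exists, and the total is $(m-1)\binom{n}{2}$. Comparing leading terms, $k\frac{t-2}{t-1}\ge m-1=\frac{(t-2)(t+1)}{2}$ holds exactly when $k\ge\frac{t^2-1}{2}$. This matches the two regimes, so only the upper bound remains.

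\textbf{Upper bound, asymptotic version.} Write $\sum_i e(\mathcal{H}_i)=\sum_{e}m(e)$. Two observations drive everything.
\begin{itemize}
\item A copy of $K_t$ in $\mathcal{G}$ is rainbow-realisable exactly when its $m$ edges have a system of distinct representatives from their colour lists. By Hall's theorem, failure means some $q$ of its edges together see fewer than $q$ colours.
\item In particular, edges of multiplicity at least $m$ span a $K_t$-free graph.
\end{itemize}
Apply the multicolour regularity lemma to $\mathcal{H}_1,\dots,\mathcal{H}_k$ simultaneously. Give each pair of clusters the list of colours in which it is regular with positive density. By the counting lemma, the resulting list-weighted reduced graph $R$ has no $K_t$ whose lists admit an SDR. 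Up to $o(n^2)$, the problem becomes a finite optimisation: maximise $\sum_{ij}|L(ij)|$ over such list assignments on $R$, or equivalently over vertex weightings of a fixed template.

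I would solve this finite problem by Zykov-type symmetrisation on cluster weights, which is linear in each vertex's weight. This lets me assume that non-adjacent clusters (empty list) have identical neighbourhoods with identical lists. Then I would argue that any optimal configuration is one of two types:
\begin{itemize}
\item a complete $(t-1)$-partite structure with all $k$ colours on every cross pair;
\item a structure with total colour set of size $m-1$ on all pairs.
\end{itemize}
For mixed configurations, I would take a clique on which Hall fails and use the deficient set of edges to find a local modification that increases the weight. Comparing the two pure types then gives the threshold $\frac{t^2-1}{2}$.

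\textbf{Exact result.} A stability version of the finite analysis gives structure: the configuration is $o(n^2)$-close to one of the two extremal examples. Standard cleanup then finishes the proof. Vertices of low weighted degree are deleted. A vertex with few "wrong" edges is moved to its best part. Any remaining edge that violates the structure, together with the dense correct structure around it, completes a rainbow $K_t$ using distinct colours chosen greedily. This gives the exact bound for large $n$.

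The main obstacle is the finite weighted optimisation: showing that no hybrid configuration beats both pure ones. Here Hall's condition must be exploited carefully, since cliques can fail to be rainbow in many mixed ways, such as a few heavy edges with large lists plus light edges sharing few colours.
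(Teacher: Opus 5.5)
The paper does not prove this statement; it is quoted from \cite{keevash2004multicolour}, and only the key lemma of that paper is reproduced (Lemma~\ref{lem: rainbow clique with weight function}). So I am judging your proposal on its own. Your reading of the garbled indices is right, and your two lower-bound constructions are correct.

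The upper bound, however, has a genuine gap. Its entire content sits in the ``finite weighted optimisation'', which you do not solve, and the regularity step does not actually reduce anything:
\begin{itemize}
\item A list assignment $L$ on the reduced graph $R$ with no $K_t$ admitting an SDR is literally an instance of the original problem on $M=|R|$ vertices (take $\mathcal{H}'_c=\{ij: c\in L(ij)\}$).
\item $M$ grows as the regularity parameter shrinks, so there is no fixed finite template to optimise over.
\item Zykov symmetrisation only lets you merge non-adjacent clusters, leaving a complete graph with non-empty lists. But the $(m-1)$-colour extremal configuration is exactly of that form, with unbounded support, so the hard case is untouched.
\end{itemize}
Your dichotomy ``every optimal configuration is one of the two pure types'' is the theorem itself. ``Use the Hall-deficient set to find an improving local modification'' is not an argument. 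The stability statement and the cleanup are likewise asserted rather than derived.

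The missing ingredient is the multiplicity criterion of Lemma~\ref{lem: rainbow clique with weight function}: a $K_t$ with $m(v_1v_2)\ge m$ and $\sum_{j<i}m(v_jv_i)\ge (i-1)(m-1)$ for $3\le i\le t$ contains a rainbow copy. With it, the regime $k<\frac{t^2-1}{2}$ is exact and needs no regularity. This is exactly how the paper runs its hypergraph analogue (Theorem~\ref{thm: k small rainbow} via Lemma~\ref{lemma: no large multi edge}):
\begin{itemize}
\item By the vertex-deletion counting argument you may assume minimum multiplicity-degree at least $(m-1)(n-1)$.
\item If some edge $v_1v_2$ has multiplicity $\ge m$, pick $v_3,\dots,v_t$ greedily by averaging and integrality so that each backward sum is at least $(i-1)(m-1)$.
\item The hypothesis $k<\frac{t^2-1}{2}=\frac{t-1}{t-2}(m-1)$ is precisely what gives $(i-2)k<(i-1)(m-1)$ for $3\le i\le t$. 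Hence each such $v_i$ is adjacent to every earlier $v_j$, and a genuine $K_t$ is formed.
\item The lemma then yields a rainbow $K_t$. So all multiplicities are at most $m-1$, giving the bound $(m-1)\binom{n}{2}$.
\end{itemize}

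For $k\ge\frac{t^2-1}{2}$ you need something else. Your observation that edges of multiplicity $\ge m$ span a $K_t$-free graph $F_2$ is the right start. But near the threshold, the inequality $\sum_e m(e)\le k|F_2|+(m-1)\bigl(\binom{n}{2}-|F_2|\bigr)$ gives essentially no lower bound on $|F_2|$. That is why the stability-plus-exchange argument of Theorem~\ref{thm: k large rainbow} only works for $k\ge k_0$. Covering all $k$ down to $\frac{t^2-1}{2}$ requires a genuinely finer idea, and your proposal does not supply one.
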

Recently,
Li, Ma and Zheng\cite{li2025multicolorturan} 
determine the value of $\ex^{\sum}(n,k,H)$, when $H$ is a color critical graph with chromatic number $\ell$, and $k\geq 2\frac{\ell-1}{\ell}e(H)$, which partly supports a conjecture of Keevash, Mubayi, Sudakov and Verstra{\"e}te \cite{keevash2007rainbowturan}.
Moreover, Chakraborti, Kim, Lee, Liu and Seo \cite{chakraborti2024rainbowextremal} determined the value of $\ex^{\sum}(n,k,H)$, for almost all $r$-color-critical graphs when $r>4$.
Very recently, Gerbner and Miao \cite{gerbner2025matchingany} studied rainbow Tur\'an problems for forbidding a matching and an arbitrary graph.

In this paper, we extend the result of Theorem \ref{thmL sudakov rainbow} to the case when $r>2$.
\begin{theorem} \label{thm: k small rainbow}
    For integers $\ell\geq r\geq 3$, $1\leq k<\frac{\ell^2-1}{2}$ and sufficiently large $n$, we have
    $$\exsumr(n,k,K_\ell^{(r)+})= \min\left\{k,\left(\binom{\ell}{2}-1\right)\right\}\binom{n}{r}.$$
 \end{theorem}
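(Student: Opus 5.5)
Write $m=\binom{\ell}{2}$ for the number of edges of $K_\ell$, so $K_\ell^{(r)+}$ has exactly $m$ hyperedges. I would prove the lower and upper bounds separately; the upper bound splits according to whether $k\le m-1$ or $m\le k<\frac{\ell^2-1}{2}$.

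\textbf{Lower bound.} Take $\cH_1=\dots=\cH_{\min\{k,m-1\}}=K_n^{(r)}$ and let every remaining $\cH_i$ be empty. A rainbow copy of $K_\ell^{(r)+}$ needs $m$ nonempty hypergraphs, but only $\min\{k,m-1\}<m$ are nonempty. Hence there is no rainbow copy, and $\sum_i e(\cH_i)=\min\{k,m-1\}\binom{n}{r}$.

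\textbf{Upper bound for $k\le m-1$.} This is trivial, since $\sum_i e(\cH_i)\le k\binom{n}{r}$.

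\textbf{Upper bound for $m\le k<\frac{\ell^2-1}{2}$.} Here we must show $\sum_i e(\cH_i)\le (m-1)\binom{n}{r}$. The plan is to reduce to the graph case, Theorem~\ref{thmL sudakov rainbow}, through a shadow/linking argument.

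1. \emph{Multiplicity sum.} Write $\sum_i e(\cH_i)=\sum_{E}m(E)$. Let $\cG_j$ be the $r$-graph of $r$-sets with multiplicity at least $j$, so $\sum_i e(\cH_i)=\sum_{j=1}^k e(\cG_j)$.

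2. \emph{Cleaning.} Delete $o(n^r)$ hyperedges from each $\cH_i$ so that every remaining edge $E\in\cH_i$ has every pair of its vertices of large codegree in $\cH_i$. This can be done, for instance, by iteratively deleting edges that contain a pair of codegree at most $\varepsilon n^{r-2}$.

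3. \emph{Building expansions greedily.} After cleaning, a rainbow copy of the graph $K_\ell$ in the colored $2$-shadows extends to a rainbow $K_\ell^{(r)+}$. The $2$-shadows are $\partial\cH_i=\{xy: xy$ lies in $\Omega(n^{r-2})$ edges of $\cH_i\}$. Each core pair $xy$ with colour $i$ has $\Omega(n^{r-2})$ choices of extension, so the $(r-2)m$ new vertices can be chosen disjointly because $n$ is large. Thus $\partial\cH_1,\dots,\partial\cH_k$ contain no rainbow $K_\ell$.

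4. \emph{Applying the graph result.} Since $k<\frac{\ell^2-1}{2}$, the small-$k$ case of Theorem~\ref{thmL sudakov rainbow} gives $\sum_i e(\partial \cH_i)\le (m-1)\binom{n}{2}$.

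5. \emph{Transferring back, and the stability step.} This gives only an asymptotic bound on hyperedges: $\sum_i e(\cH_i)\le (m-1)\binom{n}{r}+o(n^r)$. To get exactness, I would use a stability/counting argument. If some $r$-set $E$ has $m(E)\ge m$, consider its $r$-shadow neighbourhood. Suppose there are many $r$-sets of multiplicity at least $m$, and the total is at least $(m-1)\binom{n}{r}$. Then an averaging argument finds an $\ell$-set $K$ with every pair $xy\subset K$ lying in many hyperedges with multiplicity profile large enough. Hall's theorem then produces a system of distinct colours, one for each pair, giving a rainbow expansion. The Hall condition is that for every set $P$ of pairs, at least $|P|$ colours appear on the hyperedges through $P$; the threshold $k<\frac{\ell^2-1}{2}$ is exactly where the graph extremal configuration (all colours on one Turán graph) stops beating $(m-1)$ complete colours.

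6. \emph{Finishing.} Having shown that near-extremal configurations must be close to $m-1$ complete colours, I would finish by a local deletion argument. Any hyperedge in an extra colour, together with the nearly complete $m-1$ colours, yields a rainbow copy, so all other colours are empty.

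\textbf{Main obstacle.} I expect step 5, upgrading the asymptotic shadow bound to the exact value $(m-1)\binom{n}{r}$, to be the hardest part. It requires a stability version of Theorem~\ref{thmL sudakov rainbow} lifted to hypergraphs, together with a careful Hall-type colour assignment.
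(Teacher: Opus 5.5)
\textbf{There is a genuine gap: steps 1--6 prove only the asymptotic bound, not the exact one.}

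Your lower bound and the case $k\le\binom{\ell}{2}-1$ are fine, and steps 1--4 are sound. To finish the asymptotic bound you also need a double count you do not state: each graph edge lies in at most $\binom{n-2}{r-2}$ copies of $K_r$, so $K_r(G)\le e(G)\binom{n-2}{r-2}/\binom{r}{2}$. With it, steps 1--4 give $\sum_i e(\cH_i)\le(\binom{\ell}{2}-1)\binom{n}{r}+O(\varepsilon n^r)$.

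The theorem, however, is exact, and steps 5--6 do not deliver it. In step 5 you start from one $r$-set $E$ with multiplicity at least $\binom{\ell}{2}$, but then assume there are \emph{many} such sets. What must actually be excluded is a single such $r$-set, or $o(n^r)$ of them, whose excess is offset by small deficits elsewhere. A global averaging argument cannot see this, and the ``multiplicity profile'' that is supposed to make Hall's condition hold is never specified. Step 6 relies on a stability statement you have not proved. Asymptotic optimality alone gives no structure; a priori every $r$-set could have multiplicity $\binom{\ell}{2}-1$ with varying colour lists. Even granting stability, a hyperedge of an extra colour through atypical vertices need not extend to a rainbow copy.

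The exact bound needs no stability; the paper argues locally in three steps.
\begin{enumerate}
\item Assume the total exceeds $(\binom{\ell}{2}-1)\binom{n}{r}$. Reduce to the case $m(v)\ge(\binom{\ell}{2}-1)\binom{n-1}{r-1}$ for every vertex $v$ by repeatedly deleting a violating vertex. Each deletion raises $\sum_E m(E)-(\binom{\ell}{2}-1)\binom{n'}{r}$ by at least $1$, and this quantity is at most $k\binom{n'}{r}$, so the process stops at a still-large $n'$.
\item Show that no $r$-set has multiplicity at least $\binom{\ell}{2}$. Start from such an $E\supseteq\{v_1,v_2\}$ and add core vertices one at a time. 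For the current core $T$, the minimum-degree condition forces $\Theta(n^{r-1})$ of the $(r-1)$-sets $A$ disjoint from $T$ to satisfy $\sum_{x\in T}m(A\cup\{x\})\ge|T|(\binom{\ell}{2}-1)$. The sunflower bound (Theorem~\ref{thm: sunflower}) then gives a vertex $v$ and $|T|$ such sets that pairwise meet only in $v$ and avoid the current vertices. Lemma~\ref{lem: find large matching} assigns them to $T$ so that $\sum_{x\in T}m(E_{x,v})\ge|T|(\binom{\ell}{2}-1)$.
\item The resulting expansion satisfies the hypotheses of Lemma~\ref{lem: rainbow clique with weight function}, which produces the system of distinct colours.
\end{enumerate}

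That weighted lemma is also where $k<\frac{\ell^2-1}{2}$ is really used. The cap $m(E)\le k$, combined with the weighted sums, forces every pair to have positive multiplicity and Hall's condition to hold.

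Your explanation of the threshold does not carry over. The point where $k$ Tur\'an copies stop beating $\binom{\ell}{2}-1$ complete colours is a fact about graphs. For $r\ge3$ the Tur\'an hypergraph is sparser, so that crossover happens at a larger $k$.
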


Complementing this, we also characterize the case where $k$ is large:

 \begin{theorem}\label{thm: k large rainbow}
	For integers $\ell\geq r\geq 3$, there exists a constant $k_0=k_0(r,\ell)$ such that for all $k\geq k_0$ and sufficiently large $n$, we have
	$$\exsumr(n,k,K_\ell^{(r)+})= k\cdot t_{r}(n,\ell-1).$$
\end{theorem}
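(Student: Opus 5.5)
The lower bound is immediate: take $\cH_1=\dots=\cH_k=\cT_r(n,\ell-1)$. Each $\cH_i$ is $K_\ell^{(r)+}$-free, since any copy of $K_\ell^{(r)+}$ has $\ell$ core vertices, two of which lie in the same part. The edge containing those two cores would then meet a part in two vertices, which is not an edge of $\cT_r(n,\ell-1)$ when every edge meets each part at most once ($r\le \ell-1$). In the boundary case $r=\ell$ the same pigeonhole works because $\cT_r(n,\ell-1)$ is empty of transversal edges only if $\ell-1<r$; I will treat this uniformly via the convention $t_r(n,\ell-1)=0$ there. Hence there is no rainbow copy of any kind, and $\sum e(\cH_i)=k\,t_r(n,\ell-1)$.

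For the upper bound I follow the stability strategy of Keevash--Saks--Sudakov--Verstra\"ete, adapted to expansions. Let $\cH_1,\dots,\cH_k$ have no rainbow $K_\ell^{(r)+}$ and $\sum e(\cH_i)\ge k\,t_r(n,\ell-1)$.

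\textbf{Step 1: reduce to heavy hyperedges.} Call an $r$-set heavy if its multiplicity is at least $\binom{\ell}{2}$. Let $\cG'$ be the heavy $r$-graph. If $\cG'$ contains a copy of $K_\ell^{(r)+}$, we assign distinct colours greedily along its $\binom{\ell}{2}$ edges, which gives a rainbow copy. So $\cG'$ is $K_\ell^{(r)+}$-free, and by Theorem~\ref{Pikhurko} (or its asymptotic/stability form) $e(\cG')\le t_r(n,\ell-1)$. Light $r$-sets contribute at most $\binom{\ell}{2}-1$ each. Thus
\[
\sum_i e(\cH_i)\le k\,e(\cG')+\left(\binom{\ell}{2}-1\right)e(\cG\setminus\cG').
\]
This is where $k_0$ enters: for $k$ much larger than $\binom{\ell}{2}$, the assumption $\sum e(\cH_i)\ge k\,t_r(n,\ell-1)$ forces $e(\cG')\ge t_r(n,\ell-1)-O(\ell^2/k)\binom{n}{r}$. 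Choosing $k_0$ large makes this deficit below the $\varepsilon n^r$ threshold of the Pikhurko/Mubayi stability theorem. So $\cG'$ is $\varepsilon$-close to $\cT_r(n,\ell-1)$ with partition $V_1,\dots,V_{\ell-1}$. Fixing $\varepsilon$ requires $k_0$ depending on $\varepsilon$, which itself depends only on $r,\ell$.

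\textbf{Step 2: cleaning and exact count.} Using the standard minimum-degree cleanup, delete vertices of low heavy degree. A degree-counting argument shows a max-sum configuration has few of these, and an induction on $n$ or a sum argument shows that deleting such a vertex loses less than $k\binom{n}{r-1}\frac{\ell-2}{\ell-1}\cdots$ per vertex. We may therefore assume every vertex has heavy link almost complete to the other parts.

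Next take any $r$-set $E$ of positive multiplicity that is not crossing, so it meets some part $V_j$ in two vertices $x,y$. Using the abundance of heavy crossing edges, we build $K_\ell^{(r)+}$ with core $x,y$ together with one vertex from each of the other $\ell-2$ parts (the parts $V_i$, $i\ne j$). The edge $xy$ is realized by $E$, and all other $\binom{\ell}{2}-1$ edges are realized by heavy crossing edges chosen disjoint from everything else. Giving $E$ any colour it has and then choosing greedily yields a rainbow copy. Hence every non-crossing set has multiplicity $0$, so every edge lies in $\cT_r(n,\ell-1)$ on this partition. Thus $\sum e(\cH_i)\le k\,e(\cT_r)\le k\,t_r(n,\ell-1)$, with balancedness optimal.

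\textbf{Main obstacle.} The delicate part is the cleaning step: making the low-degree vertices rigorous while handling edges through them, and ensuring the embedding for a bad $E$ can avoid those vertices. I would first try the standard argument of repeatedly deleting a vertex whose total weighted degree is below $k\big(t_r(n,\ell-1)-t_r(n-1,\ell-1)\big)-\delta n^{r-1}$. This yields a gain contradicting maximality unless few are removed, and the final count then absorbs the removed vertices.
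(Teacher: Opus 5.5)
\textbf{Step 1.} This is the paper's first step. Edges of multiplicity at least $\binom{\ell}{2}$ form a $K_\ell^{(r)+}$-free $r$-graph, by greedy colouring. Light edges contribute at most $(\binom{\ell}{2}-1)\binom{n}{r}$. $k_0$ is chosen so that the resulting deficit lies below the threshold in Pikhurko's stability theorem.

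\textbf{Step 2 takes a genuinely different route.} In this proof the paper does not reduce to a minimum-degree configuration. It keeps the exceptional set $X$ of vertices with large missing degree, and shows that no edge outside $\cK$ meets some $V_i'$ twice. Then, for each $x\in X$, it charges the multiplicity of non-$\cK$ edges through $x$ (the families $\cA_x,\cA_x',\cB_x$) against $\sum_{E\in\cM,\,x\in E}m'(E)$, using a partition chosen to minimise missing multiplicity at $X$.

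You instead import the deletion argument the paper uses only for the small-$k$ theorem:
\begin{itemize}
\item Repeatedly delete a vertex with $m(v)<k\,(t_r(m,\ell-1)-t_r(m-1,\ell-1))$.
\item Each deletion raises the integer excess $\sum_i e(\cH_i)-k\,t_r(m,\ell-1)$ by at least one, and the excess is at most $k\binom{m}{r}$. So the process stops at some $n'$ that still tends to infinity with $n$, with positive excess.
\item Telescoping carries the bound back to $n$.
\end{itemize}
This is all the deletion argument gives. It does not show that the original configuration has few low-degree vertices, and no deleted vertices need to be avoided afterwards.

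In the reduced configuration every vertex is typical. So the argument that an edge meeting a part twice completes a rainbow $K_\ell^{(r)+}$ applies to every edge, and $\sum_i e(\cH_i)\le k\,t_r(n',\ell-1)$ is immediate. This bypasses the whole $X$-charging analysis and gives a noticeably cleaner proof.

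\textbf{The step you must write out.} You state in one sentence that every vertex has heavy link almost complete to the other parts. This does not follow from the degree bound alone. The argument is:
\begin{itemize}
\item[(i)] Light edges contribute at most $(\binom{\ell}{2}-1)\binom{n-1}{r-1}$ to $m(v)$. Hence the heavy degree of $v$ is at least $\delta(\cT_r(n,\ell-1))-O(\ell^2/k)\,n^{r-1}$.
\item[(ii)] Suppose $v$ had heavy codegree at least $Cn^{r-3}$ with $\Omega(n)$ typical vertices in every part. Choosing one such vertex per part, pairwise coupled, gives a heavy $K_\ell^{(r)+}$ with $v$ in its core. So $v$ is coupled to only $o(n)$ typical vertices of some part $V_j$.
\item[(iii)] By your own non-crossing argument, every $r$-set containing two typical vertices of one part has multiplicity $0$.
\end{itemize}
Hence the heavy link of $v$ lies, up to $o(n^{r-1})$, inside the crossing $(r-1)$-sets on the other $\ell-2$ parts. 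By (i) it is almost complete there, so $v$ is coupled to almost all typical vertices of each $V_i$, $i\ne j$. Now move $v$ into $V_j$ and run your final step with respect to the modified partition.

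\textbf{The boundary case.} Your remark on $r=\ell$ is wrong. There $t_r(n,\ell-1)=0$, yet $\binom{\ell}{2}-1$ copies of the complete $r$-graph contain no rainbow $K_\ell^{(r)+}$ and have total size $(\binom{\ell}{2}-1)\binom{n}{r}>0$. So the statement needs $\ell\ge r+1$. This is also the hypothesis of the Pikhurko results that both your proof and the paper's invoke.
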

We also give some stability results about rainbow hyper-Tur\'an number, which is shown in Section~3.

The proof of Theorem \ref{thm: k small rainbow} and Theorem \ref{thm: k large rainbow} is useful for the proof of Theorem \ref{thm: small-s} and Theorem \ref{large s}.
We also make the statement more complete for intermediate ranges of parameters.


The paper is organized as follows:
In Section~2, we introduce notation and collect preliminary results.
In Section~3, we prove Theorems~\ref{thm: k small rainbow} and~\ref{thm: k large rainbow} on the rainbow Tur\'an number of $K_\ell^{(r)+}$, which will be used as key ingredients later.
In Section~4, we prove Theorem~\ref{thm: small-s}.
Finally, in Section~5, we prove Theorem~\ref{large s}.

\section{Notations and Preliminaries}
Let $\cH=(V(\cH),E(\cH))$ be an $r$-graph, and let $U\subseteq V(\cH)$ be a nonempty subset. 
We write $\cH-U$ for the $r$-graph obtained from $\cH$ by deleting the vertices in $U$ and all hyperedges incident to them, and $\cH[U]$ for the subhypergraph of $\cH$ contained in $U$. For a set $S\subseteq V(\cH)$, let $d_\cH(S)$ denote the number of hyperedges in $E(\cH)$ that contain $S$. 
When $S=\{v\}$, we write $d_\cH(v)$ instead of $d_\cH(\{v\})$ for simplicity.

Let $\cH_1,\cdots,\cH_k$ be a family of $r$-graphs defined on the same vertex set $V$. For every $r$-set $E\subseteq V$, let the multiplicity of $E$ (denoted by $m(E)$) be the number of $r$-graphs among $\cH_1,\dots,\cH_k$ that contain $E$ as a hyperedge.
Let $\cG$ denote the $r$-graph on the same vertex set $V$ whose hyperedges are all $r$-sets with multiplicity at least one.
For a set of hyperedges $\cS$, let $m(\cS)=\sum_{E\in \cS}m(E)$ denote the sum of the multiplicities of the hyperedges in $\cS$.
For a vertex $v \in V(\cG)$, the multiplicity of $v$ is defined as
$$
m(v)=\sum_{\substack{E \in \cG , E \ni v}} m(E),
$$
which is the sum of the multiplicities of all hyperedges in $\cG$ that contain $v$.

When $r=2$, Keevash, Saks, Sudakov and Verstra{\"e}te \cite{keevash2004multicolour} gave the following result concerning rainbow copies of cliques.

\begin{lemma}[\cite{keevash2004multicolour}]\label{lem: rainbow clique with weight function}
    Let $H_1,H_2,\dots,H_k$ be graphs on the same vertex set of order $n$, and $\ell\geq 3$. If there is a copy of $K_\ell$ with vertex set $v_1,\dots,v_\ell$, such that $m(v_1v_2)\geq \binom{\ell}{2}$, and for every $i\geq 3$, 
    $$\sum_{j<i}m(v_jv_i)\geq (i-1)\left(\binom{\ell}{2}-1\right),$$
    then there exists a rainbow copy of $K_\ell$.
\end{lemma}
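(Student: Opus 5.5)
We build the rainbow $K_\ell$ greedily, choosing colors for edges in the order in which the vertices are revealed. Color $v_1v_2$ with any color in which it is present; this is possible since $m(v_1v_2)\geq\binom{\ell}{2}\geq 1$. Suppose the edges spanned by $v_1,\dots,v_{i-1}$ already carry distinct colors, so that $\binom{i-1}{2}$ colors have been used. We must assign distinct colors to the $i-1$ edges $v_jv_i$ with $j<i$, avoiding all previously used colors. Let $L_j$ be the set of colors containing $v_jv_i$, and remove from each $L_j$ the $\binom{i-1}{2}$ used colors. The remaining lists $L_j'$ satisfy $|L_j'|\geq m(v_jv_i)-\binom{i-1}{2}$. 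We then need a system of distinct representatives for $L_1',\dots,L_{i-1}'$, and we will get it from Hall's condition.

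Taking the $L_j'$ one at a time does not work, because only the sum $\sum_j m(v_jv_i)$ is controlled and an individual $m(v_jv_i)$ could be tiny. So we first use the trivial bound $m(v_jv_i)\le k$, or more usefully the fact that any single list has size at most $k$. Since we are proving existence, we may assume $k$ is finite and arbitrary. We also reorganize the argument. Instead of fixing earlier colors greedily, we look for a system of distinct representatives for all $\binom{\ell}{2}$ edges at once, where edge $e$ has list $L_e$ with $|L_e|=m(e)$. By Hall's theorem, it suffices that every set $S$ of edges satisfies $|\bigcup_{e\in S}L_e|\geq |S|$.

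Fix a set $S$ of edges and let $i$ be the largest index of a vertex touched by $S$. The union of the lists in $S$ has size at least $\max_{e\in S}m(e)$. Heavy edges help here: the sum condition at vertex $i$ forces the edges from $v_i$ backwards to carry large multiplicities on average. The danger is that $S$ contains many edges with small lists. Suppose $|\bigcup_{e\in S}L_e|=u<|S|$. Then every edge of $S$ has $m(e)\le u\le |S|-1\le \binom{\ell}{2}-1$. We then count the total multiplicity of edges at the last vertex $v_i$, splitting them into edges inside $S$ and edges outside $S$, and bound each part. The plan is to pick $i$ minimal such that $S$ restricted to $\{v_1,\dots,v_i\}$ already violates Hall's condition, and compare this with the sum condition $\sum_{j<i}m(v_jv_i)\geq (i-1)(\binom{\ell}{2}-1)$. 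An edge of $S$ at $v_i$ has multiplicity at most $u\le\binom{\ell}{2}-1$, and equality throughout forces every edge at $v_i$ to be in $S$ with $u=\binom{\ell}{2}-1$. In that case $|S|\ge u+1=\binom{\ell}{2}$, so $S$ is all the edges, including $v_1v_2$. But then $m(v_1v_2)\ge\binom{\ell}{2}>u$, a contradiction.

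The main obstacle is the middle case of this count. An edge of $S$ at $v_i$ can have list size up to $u$, while edges outside $S$ are unconstrained, so the averaging does not close immediately. I would handle this by induction on $i$, following the standard Keevash–Saks–Sudakov–Verstraëte approach. Assume a rainbow $K_{i-1}$ on $v_1,\dots,v_{i-1}$ exists; then either extend it using an alternating-path (augmenting) argument in the bipartite graph between the edges $v_jv_i$ and the colors, or find a recoloring of earlier edges. The augmenting step uses the deficiency $\binom{\ell}{2}-1-\binom{i-1}{2}\ge i-1$ accumulated on average by the sum condition.
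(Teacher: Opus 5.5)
\textbf{Gap: no upper bound on $k$ is used, and without one the statement is false.} Your Hall's-theorem framework (a system of distinct representatives for all $\binom{\ell}{2}$ edges at once) is the right one. The gap is exactly the ``middle case'' you flag, and it cannot be closed by augmenting paths, because you never use any upper bound on $k$ (you explicitly let $k$ be arbitrary). For arbitrary $k$ the statement as quoted is false.

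Take $\ell=4$ and $k=9$, and let the only edges be those among $v_1,\dots,v_4$:
$v_1v_2\in H_1,\dots,H_6$; $v_2v_3\in H_1,\dots,H_9$; $v_2v_4,v_3v_4\in H_1,\dots,H_7$; and $v_1v_3,v_1v_4\in H_1$ only.
Then $m(v_1v_2)=6=\binom{4}{2}$, $1+9=10=2\cdot 5$ and $1+7+7=15=3\cdot 5$. Yet $v_1v_3$ and $v_1v_4$ can only receive colour $1$, so there is no rainbow $K_4$. This is precisely your middle case: edges at $v_i$ outside $S$ carry large multiplicity and absorb the whole required sum, and Hall's condition genuinely fails. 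Your claimed deficiency $\binom{\ell}{2}-1-\binom{i-1}{2}\ge i-1$ is also false at $i=\ell$, where it equals $\ell-2$.

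The paper does not prove this lemma; it only quotes it from Keevash--Saks--Sudakov--Verstra\"ete. The hypothesis missing from the quoted statement, which holds in both places the paper applies it (Lemma~\ref{lemma: no large multi edge} and Corollary~\ref{cor: super rainbow}), is $k<\frac{\ell^2-1}{2}$.

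\textbf{How the cap closes your argument.} With this hypothesis your plan goes through, with direct counting instead of augmenting paths. Put $N=\binom{\ell}{2}$ and $d=k-N+1$. Then $N\le m(v_1v_2)\le k<\frac{\ell^2-1}{2}$ gives $1\le d\le \ell/2$.

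Since $|\bigcup_{e\in S}L_e|\ge \max_{e\in S}m(e)$, Hall's condition holds as soon as, for every $0\le u\le N-1$, at most $u$ edges have $m(e)\le u$. Suppose $a_i(u)$ of the edges $v_jv_i$ ($j<i$) have $m\le u$. Bounding the others by $k$ gives $a_i(u)\,u+(i-1-a_i(u))k\ge (i-1)(N-1)$, that is, $a_i(u)\le \frac{(i-1)d}{k-u}$.

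For $d\le u\le N-1$, summing over $i\ge 3$ gives at most $\frac{d(N-1)}{k-u}\le u$. This holds because $u(k-u)$ is concave and equals $d(N-1)$ at both $u=d$ and $u=N-1$.

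For $u<d$ we have $k-u\ge N\ge(\ell-1)d$, so each $a_i(u)\le 1$, with equality only if $(i-1)d\ge k-u$. This happens for at most $\ell-\lceil (k-u)/d\rceil\le u$ indices $i$:
for $u\ge 1$ use $\ell d-k=(\ell-1)d-N+1\le 1\le u(d-1)$;
for $u=0$ use $N-1>(\ell-2)d$.

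The cap $m(e)\le k$ is exactly what controls the edges outside $S$ that your averaging could not handle.
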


Pikhurko~\cite{pikhurko2013exact} showed that $\operatorname{ex}_r(n, K_{\ell}^{(r)+})=t_r(n, \ell-1)$ with the unique extremal $r$-graph being $T_r(n, \ell-1)$. Pikhurko further established structural stability of near-extremal $K_{\ell}^{(r)+}$-free $r$-graphs.

\begin{theorem}[\cite{pikhurko2013exact}]\label{lem: stability Turan expansion}
    For every $\epsilon>0$, there exists $\delta=\delta(\epsilon,\ell,r)>0$ and $n_0$ such that if $\cH$ is an $r$-graph on $n\geq n_0$ vertices with no copy of $K_\ell^{(r)+}$ and
    $$|\cH|\geq t_r(n,\ell-1)-\delta n^r,$$
    then $\cH$ differs from the Tur\'an hypergraph $T_r(n,\ell-1)$ in at most $\epsilon n^r$ hyperedges.
\end{theorem}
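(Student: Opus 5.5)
This is Pikhurko's stability theorem, quoted from \cite{pikhurko2013exact}. Here is how I would prove it directly. The idea is to pass from the $K_\ell^{(r)+}$-free $r$-graph $\cH$ to a $K_\ell$-free graph, namely the shadow formed by its "heavy" pairs, and then use graph stability.

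\emph{Step 1 (only heavy pairs).} Fix the constant $C=(r-2)\binom{\ell}{2}+\ell$. Call a pair $\{x,y\}$ \emph{heavy} if $d_\cH(\{x,y\})\geq C\binom{n}{r-3}$, and \emph{light} otherwise. Every hyperedge containing a light pair lies in one of at most $\binom n2\cdot C\binom n{r-3}=O(n^{r-1})$ hyperedges, and this is $o(n^r)$. So we delete all hyperedges containing a light pair, obtaining $\cH'$ with $|\cH'|\geq t_r(n,\ell-1)-\delta n^r-O(n^{r-1})$. Every pair inside a hyperedge of $\cH'$ is heavy.

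\emph{Step 2 (the heavy-pair graph is $K_\ell$-free).} Let $G$ be the graph of heavy pairs. Suppose $G$ contains a copy of $K_\ell$ on vertices $v_1,\dots,v_\ell$. Process its $\binom{\ell}{2}$ pairs one at a time. For each pair we need a hyperedge containing it that avoids the $\ell$ core vertices and the at most $(r-2)\binom{\ell}{2}$ extra vertices already used. Hyperedges through a given pair that meet any particular forbidden vertex number at most $\binom{n}{r-3}$. There are fewer than $C$ forbidden vertices, so the heaviness of the pair leaves a valid choice. This embeds $K_\ell^{(r)+}$ in $\cH$, a contradiction. Hence $G$ is $K_\ell$-free. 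Since every hyperedge of $\cH'$ spans an $r$-clique of $G$, we get $|\cH'|\leq k_r(G)$, the number of $K_r$'s in $G$.

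\emph{Step 3 (stability for clique counts).} By Zykov's theorem, a $K_\ell$-free graph satisfies $k_r(G)\leq k_r(T(n,\ell-1))=t_r(n,\ell-1)$. What is needed is the stability version: if $G$ is $K_\ell$-free and $k_r(G)\geq t_r(n,\ell-1)-\delta' n^r$, then $G$ can be made into $T(n,\ell-1)$ by changing at most $\epsilon' n^2$ pairs. This is the step I expect to be the main obstacle. My first attempt would be Zykov symmetrization, run carefully: cloning nonadjacent vertices toward larger $K_{r-1}$-degree never decreases $k_r$ and preserves $K_\ell$-freeness. So a near-extremal $G$ can lose only $o(n^r)$ cliques before becoming complete $(\ell-1)$-partite, and among complete $(\ell-1)$-partite graphs, near-maximal $k_r$ forces near-balanced parts. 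The delicate point is controlling how many edges change during symmetrization. An alternative is first to derive $e(G)\geq t_2(n,\ell-1)-o(n^2)$ from the near-maximal $K_r$ count, using a Kruskal--Katona type bound for $K_\ell$-free graphs in terms of edges. Then the Erdős--Simonovits stability theorem applies directly.

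\emph{Step 4 (transfer back).} Given Step 3, $G$ differs from a balanced complete $(\ell-1)$-partite graph $T$ on at most $\epsilon' n^2$ pairs. The $r$-cliques of $G$ not in $T$ then number at most $\epsilon' n^r$. Since $\cH'\subseteq K_r(G)$ and $|\cH'|\geq t_r(n,\ell-1)-o(n^r)$, the sets $\cH'\setminus \cT_r(n,\ell-1)$ and $\cT_r(n,\ell-1)\setminus\cH'$ each have size $O((\epsilon'+\delta)n^r)$. Adding back the $O(n^{r-1})$ hyperedges deleted in Step 1 shows that $\cH$ differs from $\cT_r(n,\ell-1)$ in at most $\epsilon n^r$ hyperedges, once $\delta,\epsilon'$ are chosen small and $n_0$ is large.
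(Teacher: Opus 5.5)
The paper gives no proof of this statement. It is quoted from \cite{pikhurko2013exact}, and the stability statement itself goes back to Mubayi \cite{mubayi2006a}. So there is nothing internal to compare with. Your outline is the standard argument: delete hyperedges through pairs of small codegree, show by greedy embedding that the heavy-pair graph $G$ is $K_\ell$-free, observe that what remains is a subfamily of the $r$-cliques of $G$, and transfer clique-count stability for $K_\ell$-free graphs back to $\cH$. Steps~1, 2 and~4 are correct as written. In Step~2 your $C$ exceeds the at most $\ell-2+(r-2)(\binom{\ell}{2}-1)$ forbidden vertices, each of which rules out at most $\binom{n}{r-3}$ hyperedges through the pair. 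In Step~4 the $r$-cliques of $G$ outside $T$ number at most $\epsilon' n^2\binom{n}{r-2}$.

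The only step that needs a precise ingredient is Step~3. Drop the symmetrization route, since tracking edit distance through Zykov moves is exactly what makes it awkward, and make the second route explicit. Kruskal--Katona itself will not do: it ignores $K_\ell$-freeness, and for $r\geq 3$ it only yields $e(G)\geq (c-o(1))\,t_2(n,\ell-1)$ for a constant $c<1$. What you need is the Khadzhiivanov / S\'os--Straus inequality, a graph analogue of Maclaurin's inequality. If $G$ is $K_\ell$-free and $\ell-1\geq r$, then
\[
\Bigl(k_r(G)\big/\tbinom{\ell-1}{r}\Bigr)^{1/r}\leq \Bigl(e(G)\big/\tbinom{\ell-1}{2}\Bigr)^{1/2}.
\]
Feeding in $k_r(G)\geq|\cH'|\geq t_r(n,\ell-1)-2\delta n^r$ gives $e(G)\geq t_2(n,\ell-1)-O(\delta)n^2$. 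The Erd\H{o}s--Simonovits stability theorem then gives the $\epsilon' n^2$-closeness to $T(n,\ell-1)$ that Step~4 uses. When $\ell-1<r$ we have $k_r(G)=0$, hence $|\cH|=O(n^{r-1})$, and the statement is trivial. With this inequality cited, your proposal is a complete proof.
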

\begin{lemma}\label{lem: find large matching}
     For integer $t\geq 1$, and $K_{t,t}$ is complete bipartite graph with two parts $A=\{a_1,\dots,a_t\}$, and $B=\{b_1,\dots,b_t\}$.
     And $\omega:E(K_{t,t})\to \mathbb{R}$ is a weighted function. For every $s\in \mathbb{R}$, if
for every $i\in[t]$, $\sum_{j\in [t]}\omega(a_ib_j)\geq s$, then there exists a matching $M$ with size $t$ such that
$\sum_{e\in M}\omega (e)\geq s$.
 \end{lemma}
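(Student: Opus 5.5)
The plan is a simple averaging argument over a family of $t$ perfect matchings of $K_{t,t}$ that together cover every edge exactly once. For $k\in\{0,1,\dots,t-1\}$ define the perfect matching
$$M_k=\{a_ib_{\sigma_k(i)} : i\in[t]\},\qquad \sigma_k(i)\equiv i+k \pmod t,$$
where we take representatives in $[t]$. Each $\sigma_k$ is a permutation of $[t]$, so each $M_k$ is a matching of size $t$.

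First we check that the $M_k$ partition $E(K_{t,t})$. Fix an edge $a_ib_j$. It lies in $M_k$ exactly when $k\equiv j-i\pmod t$. There is exactly one such $k\in\{0,\dots,t-1\}$, so every edge of $K_{t,t}$ lies in exactly one of the matchings $M_0,\dots,M_{t-1}$.

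Next we double count the total weight using this partition and the hypothesis on the row sums:
$$\sum_{k=0}^{t-1}\sum_{e\in M_k}\omega(e)=\sum_{e\in E(K_{t,t})}\omega(e)=\sum_{i\in[t]}\sum_{j\in[t]}\omega(a_ib_j)\geq ts.$$
Hence the average of $\sum_{e\in M_k}\omega(e)$ over the $t$ values of $k$ is at least $s$. Some $M_k$ must therefore satisfy $\sum_{e\in M_k}\omega(e)\ge s$, and this $M=M_k$ is the desired matching of size $t$.

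There is no real obstacle here. The only point to watch is that $\omega$ may take negative real values. The argument uses only linearity of summation and the pigeonhole principle applied to an average, so negative weights do not affect it. One could alternatively argue via the Birkhoff–von Neumann theorem applied to the uniform doubly stochastic matrix, but the cyclic decomposition above is more direct.
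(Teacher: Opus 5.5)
Your proof is correct, but it is not the paper's argument. The paper proceeds greedily. It takes a maximum-weight edge $e_t$ of $K_{t,t}$, relabels so that $e_t=a_tb_t$, deletes both endpoints, and repeats in the remaining complete bipartite graph. This gives edges $e_j=a_jb_j$ of weight $x_j$, each chosen from the graph on $\{a_1,\dots,a_j\}\cup\{b_1,\dots,b_j\}$. Since $a_1$ and $b_j$ are both still present at step $j$, the edge $a_1b_j$ was a candidate when $e_j$ was picked. Hence $\omega(a_1b_j)\le x_j$, and so $\sum_j x_j\ge\sum_j\omega(a_1b_j)\ge s$.

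The paper therefore compares the greedy matching with a single row, that of the last surviving vertex $a_1$. Since this vertex is not known in advance, the hypothesis is needed at every $i$. You instead compare the best of the $t$ cyclic matchings with the average of all rows. Your route gives a stronger statement at no cost: it only uses $\sum_{i,j}\omega(a_ib_j)\ge ts$, so the row-sum condition need only hold on average, and the argument is symmetric in the two sides.

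That symmetry helps in the application in Lemma~\ref{lemma: no large multi edge}. There, the definition of a set fitting $T$ gives, for each petal, a lower bound on the sum over the vertices of $T$. This is a column-sum rather than a row-sum condition. Your version applies verbatim, whereas the paper's lemma has to be invoked with the roles of $A$ and $B$ swapped. The greedy proof, in turn, needs no decomposition of $K_{t,t}$ into perfect matchings. It also shows that a very natural matching, the greedily chosen one, already dominates a full row.
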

 \begin{proof}
 First, set $G_t=K_{t,t}$ be the original graph.
Let $e_t$ be the edge with maximum weight $x_t$ among $E(G_t)$. We may assume $e_t=a_tb_t$.
Then set $G_{t-1}=G_t[\{a_1,\dots,a_{t-1}\cup\{b_1,\dots,b_{t-1}\}\}]$. And let $e_{t-1}$ be the edge with maximum weight $x_{t-1}$ among $E(G_{t-1})$.
Repeat this process, define
$$
G_{t-(i+1)} = G_{t-i}\bigl[\{a_1,\dots,a_{t-(i+1)}\}\cup\{b_1,\dots,b_{t-(i+1)}\}\bigr],
$$
and set $e_{t-(i+1)}$ to be the edge with maximum weight $x_{t-(i+1)}$ among $E(G_{t-(i+1)})$ (we may assume $e_{t-(i+1)}=a_{t-(i+1)} b_{t-(i+1)}$) for $i=1,\dots,t-1$.
Then $e_1=a_1b_1$ with weight $x_1$.
Then, we have $\omega(a_1b_j)\leq x_j$ for every $j\geq 1$, otherwise, it contradicts to the choice of $e_{j}$. Then we have
$$\sum_{i=1}^t x_i\geq \sum_{j=1}^t \omega(a_1b_j)\geq s,$$
and $M=\{e_1,\dots,e_t\}$ is the matching we search.
\end{proof}

We also need the following result on the Tur\'{a}n number of sunflowers, which is a special family of hypergraphs.
A family $A_1, \ldots, A_k$ of distinct sets is said to be a \textit{sunflower} if there exists a kernel $C$ contained in each of the $A_i$ such that the petals $A_i \backslash C$ are disjoint. In particular, we seek the $r$-uniform sunflower with $k$ petals and kernel of size $t$, which is denoted by $\mathcal{S}_t^{(r)}(k)$. Bradač, Bucić and Sudakov \cite{bradavc2023turan} determined the order of magnitude of the Turán number of $\mathcal{S}_t^{(r)}(k)$.

\begin{theorem}[\cite{bradavc2023turan}]\label{thm: sunflower}
    For integers $r\geq 2$, $t\geq 1$ and $k\geq 2$, and sufficiently large $n$, we have
$$
\operatorname{ex}(n, \mathcal{S}_t^{(r)}(k)) \approx_r \begin{cases}n^{r-t-1} k^{t+1} & \text { if } t \leq \frac{r-1}{2}, \\ n^t k^{r-t} & \text { if } t>\frac{r-1}{2}.\end{cases}
$$
\end{theorem}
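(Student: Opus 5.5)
Theorem~\ref{thm: sunflower} is quoted from~\cite{bradavc2023turan} and is only used as a black box later in the paper. If I had to prove it, I would treat the lower and upper bounds separately. Throughout I would work with $r$-partite $r$-graphs. Passing to an $r$-partite subgraph costs only a factor $r!/r^r$, and in the partite setting the kernel of a sunflower occupies a well-defined set of $t$ parts.

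\textbf{Lower bound for $t\le \frac{r-1}{2}$.} I would take parts $V_1,\dots,V_r$. The first $t+1$ parts $V_1,\dots,V_{t+1}$ have size about $k/2$, and the remaining parts have size about $n/r$. I would take all transversal $r$-sets, which gives $\Theta_r(n^{r-t-1}k^{t+1})$ edges. Now suppose there is a sunflower with kernel of size $t$. The kernel lies in some $t$ parts, so it misses at least one of the $t+1$ small parts. Every petal therefore contains its own vertex from that small part, and disjointness forces $k\le k/2$, a contradiction. Here the hypothesis on $t$ is not even needed.

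\textbf{Lower bound for $t>\frac{r-1}{2}$.} The same idea with $r-t$ small parts gives $n^tk^{r-t}$ edges. It fails, however, because a kernel of size $t\ge r-t$ can swallow all the small parts and then have its petals in the large parts. So I would thin the large-part coordinates. Concretely, for each fixed choice of the $r-t$ small-part vertices together with $2t-r$ large-part vertices (a potential kernel), I would let the remaining large coordinates be chosen from a linear or algebraic code of bounded ``petal degree''. For example, I would make the last $r-t$ large coordinates a function of the other $t$ large coordinates, up to $O(k)$ choices. The aim is to keep about $n^tk^{r-t}$ edges while any fixed $t$-set extends to fewer than $k$ pairwise disjoint petals. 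I would first try a random choice of such functions, followed by deletion, and verify the count.

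\textbf{Upper bounds.} I would use Füredi's delta-system method. First I would pass to an $r$-partite subfamily $\mathcal{F}^*\subseteq\mathcal{F}$ with $|\mathcal{F}^*|\ge c(r,k)|\mathcal{F}|$ in which every edge has the same intersection pattern $\mathcal{J}\subseteq 2^{[r]}$. Here $\mathcal{J}$ is closed under intersection, and every $J\in\mathcal{J}$ is the kernel of a large sunflower in $\mathcal{F}^*$. The hypothesis then says that no member of $\mathcal{J}$ has size $t$. The standard rank argument on such intersection-closed families bounds $|\mathcal{F}^*|$ by $n^{\max}$ of the appropriate rank, which yields the exponent $r-t-1$ or $t$ in the two regimes; the threshold $t$ versus $\frac{r-1}{2}$ comes from comparing $t$ with $r-t-1$.

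\textbf{Main obstacle: the dependence on $k$.} The delta-system method gives only $c(r,k)$ with super-exponential loss in $k$, whereas the theorem demands the sharp polynomial factor $k^{t+1}$ or $k^{r-t}$. To get it, I would replace the homogenisation step by a direct counting argument. Fix a $t$-set $T$ and look at its link. The link contains no matching of size $k$, so it has a vertex cover of size at most $(r-t)k$. I would then double count pairs (edge, cover vertex) and iterate the argument on $(t+1)$-sets, inducting on $t$ to peel off one factor of $k$ at each level. Making this induction lose only constant factors depending on $r$, rather than on $k$, is where I expect the real work to lie.
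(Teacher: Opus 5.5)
The paper does not prove this statement. It is quoted from \cite{bradavc2023turan}, and the paper only ever uses the crude consequence that an $(r-1)$-graph with $\Omega(n^{r-1})$ edges contains a sunflower with a one-vertex kernel and a bounded number of petals. That consequence follows by taking a vertex of degree $\Omega(n^{r-2})$ and a matching in its link. Your partite construction for $t\le\frac{r-1}{2}$ is correct. The rest of your outline, however, has two genuine gaps.

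\textbf{Lower bound for $t>\frac{r-1}{2}$.} Keeping $r-t$ parts of size $O(k)$ and thinning the large coordinates cannot reach $n^tk^{r-t}$. Suppose every edge is a transversal of parts $S_1,\dots,S_{r-t}$ of size $O(k)$ and $L_1,\dots,L_t$. Group the edges by the $t$-set $T(E)=E\cap(S_1\cup\dots\cup S_{r-t}\cup L_1\cup\dots\cup L_{2t-r})$. Within one group, the sets $E\setminus T(E)$ form an $(r-t)$-partite $(r-t)$-graph with no matching of size $k$, since such a matching would be a sunflower with kernel $T(E)$. That graph therefore has at most $(r-t)(k-1)n^{r-t-1}$ edges. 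Hence the whole family has $O_r(k^{r-t+1}n^{t-1})=o(n^tk^{r-t})$ edges, whatever thinning you apply.

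Random dependent coordinates fail too. Take all parts of size $n$, and draw the last $r-t$ coordinates from random sets of size $ck$ depending on the first $t$. A kernel made of $t-2$ free coordinates and two dependent ones (such kernels exist whenever $\frac r2\le t\le r-2$) then has about $c^2k^2$ essentially disjoint petals. This forces $c=O(k^{-1/2})$, leaving only about $n^tk^{(r-t)/2}$ edges.

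What works is to use a code as a family of blocks rather than to thin coordinates; this is where your algebraic-code idea fits. Set $m=t+(k-1)(r-t)$, take a prime power $q\ge m$ and $A\subseteq\mathbb F_q$ with $|A|=m$, and use vertex set $A\times\mathbb F_q$. The blocks are the sets $\{(a,p(a)):a\in A\}$ for all polynomials $p$ of degree less than $t$, and the edges are all $r$-subsets of all blocks.
\begin{itemize}
\item Two blocks share fewer than $t$ points, so every sunflower with a $t$-element kernel lies inside a single block.
\item A single block has room for at most $k-1$ petals.
\item There are $q^t\binom mr=\Theta_r(n^tk^{r-t})$ edges with $n=mq$; for other values of $n$, pad with isolated vertices.
\end{itemize}

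\textbf{Upper bound (the more serious gap).} The delta-system method gives only the Frankl--F\"uredi bound $O_{r,k}(n^{\max(t,r-t-1)})$. The point of the cited theorem is that the constant is $O_r(k^{\min(t+1,r-t)})$, and you leave exactly this open.

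The induction you sketch has nothing to run on. The hypothesis puts no restriction on links of $(t+1)$-sets: all $r$-sets through a fixed $(t+1)$-set form an admissible family. A cover $C_T$ of a single link, used on its own, only gives $|\mathcal F|=O_r(kn^{r-1})$.

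The missing idea is to use the covers $C_T$ of all $t$-subsets $T$ of one edge simultaneously. For $r=3$, $t=1$ this is easy:
\begin{itemize}
\item Each cover satisfies $|C_v|\le 2(k-1)$.
\item Consider the digraph with $u\to w$ whenever $w\in C_u$. In each edge $\{a,b,c\}$, every vertex has an out-neighbour inside the edge.
\item A $3$-vertex digraph with minimum out-degree $1$ has a directed Hamiltonian path $x\to y\to z$.
\item Counting such paths gives $|\mathcal F|\le n(2k)^2$.
\end{itemize}

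In general, one would want every edge to be reconstructible from $\max(t,r-t-1)$ of its vertices in $\min(t+1,r-t)$ steps. Each step would add a vertex of some $C_T$, where $T$ is a $t$-set already reconstructed. This would give
$|\mathcal F|\le\binom{n}{\max(t,r-t-1)}\bigl(\binom rt rk\bigr)^{\min(t+1,r-t)}$.
Showing that such reconstructions always exist is a genuine combinatorial lemma, about intersection-closed families with no $t$-element member, and your proposal does not supply it.
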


\section{Proof of Theorem \ref{thm: k small rainbow}}

In this section, we assume that the family of $r$-graphs $\cH_1,\cdots,\cH_k$ on the same vertex set $V$ is rainbow $K_{\ell}^{(r)+}$-free and attains the maximum value of $\sum_{i=1}^k e(\cH_i)$. Let $\cG$ be the $r$-graph whose hyperedges are all $r$-sets with multiplicity at least one among $\cH_1,\dots,\cH_k$.

For this problem, there are two natural constructions. On the one hand, if $k \geq \frac{\ell^2-1}{2}$, then we take $k$ identical copies of the Tur\'an hypergraph $T_r(n,\ell-1)$. On the other hand, when $\binom{\ell}{2} \le k<\frac{\ell^2-1}{2}$, it is better to take $\binom{\ell}{2}-1$ copies of the complete $r$-graph and let the remaining hypergraphs have empty hyperedge sets, and of course for $k\le \binom{\ell}{2}-1$, it is optimal to take all $k$ hypergraphs to be complete $r$-graph. Thus, we may assume that $\binom{\ell}{2}\leq k <\frac{\ell^2-1}{2}$ in the following.

First we claim that it suffices to prove Theorem~\ref{thm: k small rainbow} for $r$-graphs that have $n>r^3$ vertices and the minimum multiple of vertices in $\cG$ is at least $\left(\binom{\ell}{2}-1 \right)\binom{n-1}{r-1}$, i.e.
\begin{equation}\label{eq: mini multi}
	\min_{v\in V(\cG)}m(v)\geq \left(\binom{\ell}{2}-1 \right)\binom{n-1}{r-1}.
\end{equation}
Indeed, suppose we have done this, and let $\cG$ with $n>r^8$ vertices, $e(\cG) \geqslant\left(\binom{\ell}{2}-1\right)\binom{n}{r}$ and no rainbow copy of $K_\ell^{(r)+}$. If the minimum multiple of vertices in $\cG$ is at least $\left(\binom{\ell}{2}-1 \right)\binom{n-1}{r-1}$, then we are done. 

Otherwise we obtain a contradiction as follows. Let $\cG=\cG(n), \cG(n-1), \ldots$ be a sequence of $r$-graphs where $\cG(m)$ has $m$ vertices and is obtained from $\cG(m+1)$ by deleting a vertex with multiplicity strictly less than $(\binom{\ell}{2}-1) \binom{m}{r-1}$ and all hyperedges incident to it. Setting $f(m)=e(\cG(m))-\left(\binom{\ell}{2}-1\right)\binom{m}{r}$ we have $f(n) \geqslant 0$ and $f(m) \geqslant f(m+1)+1$. If we can continue this process to obtain an $r$-graph $\cG(\ell^3)$, then
$$
n-\ell^3 \leqslant \sum_{m=\ell^3}^{n-1}(f(m)-f(m+1)) \leqslant f(\ell^3)<k\binom{\ell^3}{r}<\frac{\ell^2}{2}\binom{\ell^3}{r},
$$
which is a contradiction for $n>\ell^8$. Otherwise we obtain an $r$-graph $\cG(n^{\prime})$ with $n>n^{\prime}>\ell^3$ having the minimum multiple of vertices in $\cG(n^{\prime})$ is at least $\left(\binom{\ell}{2}-1\right)\binom{n^{\prime}-1}{r-1}$, containing no rainbow copy of $K_\ell^{(r)+}$ and $e(\cG(n^{\prime}))>(\binom{\ell}{2}-1)\binom{n^{\prime}}{r}$, which contradicts our assumption.

For every set $T$ with $t$ vertices, and an $(r-1)$-subset $A\subseteq V(\cG)\setminus T$, we call the set $A$ {\em fits} $T$ if the sum of multiplicities of hyperedges containing $A$ and one vertex in $T$, i.e.
$\sum_{|E\cap T|=1,A\subseteq E}m(E)$, is at least $t\left( \binom{\ell}{2}-1\right)$. Let $\cH(T)$ be the collection of $(r-1)$-subsets of $V(\cG)\setminus T$ that fit $T$.
\begin{lemma}\label{lem: sumflower}
    For every set $T$ with $t$ vertices, and a set $B\subseteq V$ with size $b$, where $b$ is a constant. The $(r-1)$-graph $\cH(T)$ contains a sunflower with one core in $V\setminus T$ and $t$ petals contained in $V\setminus T$ avoiding $B$.
\end{lemma}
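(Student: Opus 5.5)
The lemma is vague about what a "sunflower with one core" means. I read it as a sunflower in $\cH(T)$ whose kernel is a single vertex $c\in V\setminus (T\cup B)$ (kernel size $1$) and which has $t$ petals. So we want $(r-1)$-sets $A_1,\dots,A_t\in\cH(T)$ with $A_i\cap A_j=\{c\}$ for $i\neq j$, each contained in $V\setminus(T\cup B)$. The plan is to show that $\cH(T)$ restricted to $V\setminus(T\cup B)$ has $\Omega(n^{r-1})$ edges, and then use that the Tur\'an number of such a sunflower is $O(n^{r-2})$.

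\emph{Counting fitting sets.} I would use the minimum multiplicity condition \eqref{eq: mini multi}, which we may assume here. Summing over the vertices $v\in T$,
\[
\sum_{v\in T} m(v)\ \ge\ t\Bigl(\tbinom{\ell}{2}-1\Bigr)\binom{n-1}{r-1}.
\]
Regroup the contributions of edges $E$ with $|E\cap T|=1$ according to the $(r-1)$-set $A=E\setminus T\subseteq V\setminus T$. Edges meeting $T$ in at least two vertices, or meeting $B$, number only $O(n^{r-2})$, and each has multiplicity at most $k$, so together they contribute $O(kn^{r-2})$. Put $w(A)=\sum_{|E\cap T|=1,\,A\subseteq E} m(E)$. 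Then
\[
\sum_{A\subseteq V\setminus(T\cup B)} w(A)\ \ge\ t\Bigl(\tbinom{\ell}{2}-1\Bigr)\binom{n-1}{r-1}-O(n^{r-2}).
\]
Each $w(A)$ is at most $tk$. The number of candidate sets $A$ is $\binom{n-t-b}{r-1}=\binom{n-1}{r-1}-O(n^{r-2})$. A set $A$ not in $\cH(T)$ has $w(A)\le t(\binom{\ell}{2}-1)-1$, because multiplicities are integers.

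\emph{Main obstacle.} Let $N$ be the number of fitting sets $A$. The averaging above gives
\[
N\bigl(tk-t(\tbinom{\ell}{2}-1)+1\bigr)\ \ge\ \binom{n-1}{r-1}-O(n^{r-2}),
\]
so $N\ge c\,n^{r-1}$ for some $c=c(t,k,\ell)>0$. This step only works because the average of $w$ is at least the threshold, which is exactly what \eqref{eq: mini multi} provides. It is the crux of the argument.

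\emph{Finding the sunflower.} If $r-1\ge 2$, Theorem \ref{thm: sunflower} applied to $(r-1)$-uniform sunflowers with kernel size $1$ gives an extremal number of order at most $n^{r-3}t^{2}$ when $1\le (r-2)/2$, and $n\,t^{r-2}$ otherwise. For $r\ge3$ both bounds are $o(n^{r-1})$, so they are below $cn^{r-1}$ once $n$ is large. Hence $\cH(T)$ restricted to $V\setminus(T\cup B)$ contains $\mathcal S^{(r-1)}_1(t)$, as required.

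A direct greedy argument also works. Since $N\ge cn^{r-1}$, some vertex $c$ has degree $\Omega(n^{r-2})$ in this restricted hypergraph. Choose links $A_i\setminus\{c\}$ one at a time, avoiding the $O(t r)$ vertices already used. Each forbidden vertex kills only $O(n^{r-3})$ candidate links, so a new link is always available.

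One edge case remains. When $r-1=2$ the kernel-$1$ sunflower is a star, and the degree argument applies verbatim.
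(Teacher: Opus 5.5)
Your proof is correct and follows the same route as the paper. Both sum the minimum-multiplicity condition \eqref{eq: mini multi} over $T$, discard the $O(n^{r-2})$ contribution from edges meeting $T$ twice or whose $(r-1)$-part meets $B$, average $w(A)$ over $(r-1)$-sets with integrality to get $\Theta(n^{r-1})$ fitting sets, and then apply the Brada\v{c}--Buci\'c--Sudakov sunflower bound. Your explicit bound $w(A)\le tk$ makes the averaging step cleaner than the paper's looser wording in terms of individual hyperedge multiplicities, and your greedy alternative shows the sunflower theorem is not actually needed.
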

\begin{proof}
    By the assumption on the minimum degree of $\cG$, we have
    $$\sum_{x\in T}m(x)\geq t \left(\binom{\ell}{2}-1\right)\binom{n-1}{r-1}.$$
    Let $\cE_i(T)$ be the set of hyperedges in $\cG$ containing exactly $i$ vertices in $T$.
    Then,
    $$\sum_{x\in T}m(x)=\sum_{i=1}^{t}i\cdot m(\cE_i(T))=m(\cE_1(T))+O(n^{r-2}).$$
    Let $\overline{m}$ denote the average multiplicity of hyperedges in $\cE_1(T)$, then
    \begin{equation}\label{eq: average multi, 1}
        \overline{m}\geq\frac{t (\binom{\ell}{2}-1)\binom{n-1}{r-1}-O(n^{r-2})}{\binom{n-t}{r-1}}>t\left(\binom{\ell}{2}-1\right)-\frac{1}{2}
    \end{equation}
    when $n$ is sufficiently large.
    Thus, there exists $\Theta(n^{r-1})$ hyperedges in $\cE_1(T)$ with multiplicity at least $t(\binom{\ell}{2}-1)$, and equally, there exists $\Theta(n^{r-1})$ $(r-1)$-sets contained in $V\setminus T$ that fit $T$.

    Since the number of $(r-1)$-sets intersecting $B$ is $O(n^{r-2})$, when $n$ is sufficiently large, the number of hyperedges in $\cE_1(T)$ fit $T$ and avoiding $B$ is also $\Theta(n^{r-1})$.
    By Theorem~\ref{thm: sunflower}, the Tur\'an number of sunflower with one core and $t$ petals is $O(n^{r-2})$ when $n$ is large enough. Thus, there exists an $(r-1)$-graph $\cH(T)$ contains a sunflower with one core in $V\setminus T$ and $t$ petals contained in $V\setminus T$ avoiding $B$.
\end{proof}

\noindent
\textbf{Proof of Theorem \ref{thm: k small rainbow}}.
We finish the proof by showing that every hyperedge is contained in at most $\binom{\ell}{2}-1$ hypergraphs.
\begin{lemma}\label{lemma: no large multi edge}
    For each hyperedge $E\in E(\cG)$, the multiplicity $m(E)\leq \binom{\ell}{2}-1$.
\end{lemma}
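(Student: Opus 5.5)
The plan is to argue by contradiction. Suppose some hyperedge $E\in E(\cG)$ has $m(E)\geq\binom{\ell}{2}$. We build a rainbow copy of $K_\ell^{(r)+}$ step by step, choosing the core vertices $v_1,\dots,v_\ell$ one at a time together with one hyperedge for each pair $v_jv_i$. We then reduce to the graph case via Lemma~\ref{lem: rainbow clique with weight function}.

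\textbf{Building the candidate expansion.} Start with two vertices $v_1,v_2\in E$ and use $E$ itself as the hyperedge $E_{12}$ for the pair $v_1v_2$, so its weight is $m(E_{12})\geq\binom{\ell}{2}$. Suppose $v_1,\dots,v_{i-1}$ and the hyperedges $E_{jj'}$ for $j<j'\le i-1$ have been chosen.
\begin{itemize}
\item Let $T=\{v_1,\dots,v_{i-1}\}$, so $t=i-1$, and let $B$ be the set of all vertices used so far. Then $|B|\le \binom{\ell}{2}r$ is a constant.
\item Apply Lemma~\ref{lem: sumflower} to $T$ and $B$. This gives a sunflower in $\cH(T)$ with a one-vertex kernel $v_i\notin T$ and $t$ petals $P_1,\dots,P_t$, pairwise disjoint $(r-2)$-sets avoiding $B$.
\item We need the kernel $v_i$ to avoid $B$ as well. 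This follows from the proof of Lemma~\ref{lem: sumflower}: there are $\Theta(n^{r-1})$ fitting $(r-1)$-sets disjoint from $B$, and the sunflower Tur\'an bound (Theorem~\ref{thm: sunflower}) applies to that subfamily.
\end{itemize}

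\textbf{Assigning petals via the matching lemma.} Each set $\{v_i\}\cup P_j$ fits $T$, meaning $\sum_{x\in T} m(\{x,v_i\}\cup P_j)\geq t\left(\binom{\ell}{2}-1\right)$. Form $K_{t,t}$ with the petals on one side and the vertices of $T$ on the other, with weight $\omega(P_j x)=m(\{x,v_i\}\cup P_j)$. Every petal has weighted degree at least $t(\binom{\ell}{2}-1)$. Lemma~\ref{lem: find large matching} (with the petals as the side $A$) then yields a perfect matching, i.e.\ a bijection $x\mapsto P_{\sigma(x)}$, whose total weight is at least $t(\binom{\ell}{2}-1)$.

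For each $v_j\in T$ set $E_{ji}=\{v_j,v_i\}\cup P_{\sigma(v_j)}$. These hyperedges meet each other only in $v_i$. Their non-core vertices avoid all earlier vertices, and all future vertices will avoid them because they are placed in $B$. We also have
$$\sum_{j<i}m(E_{ji})\geq (i-1)\left(\binom{\ell}{2}-1\right).$$

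\textbf{Reduction to the graph case.} After $\ell$ steps, define graphs $H'_1,\dots,H'_k$ on $\{v_1,\dots,v_\ell\}$ by putting $v_jv_i\in H'_c$ iff $E_{ji}\in\cH_c$. The multiplicity of $v_jv_i$ in this graph family equals $m(E_{ji})$, so the hypotheses of Lemma~\ref{lem: rainbow clique with weight function} hold exactly. That lemma gives a rainbow $K_\ell$ in the $H'_c$, meaning distinct colours $c_{ji}$ with $E_{ji}\in\cH_{c_{ji}}$. By construction the sets $E_{ji}\setminus\{v_j,v_i\}$ are pairwise disjoint and avoid the core vertices. Hence the $E_{ji}$ form a rainbow copy of $K_\ell^{(r)+}$, a contradiction.

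\textbf{Expected main obstacle.} The key point is disjointness bookkeeping: the kernel and petals at each step must avoid everything previously chosen, while the fitting condition, which only controls sums, must be converted into an honest assignment of distinct petals to distinct earlier core vertices. Lemma~\ref{lem: find large matching} is precisely what makes that conversion preserve the sum bound required by Lemma~\ref{lem: rainbow clique with weight function}.
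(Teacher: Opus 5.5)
Your proposal is correct and is essentially the paper's proof. It uses the same three steps: a sunflower from Lemma~\ref{lem: sumflower}, conversion of the fitting sums into a petal assignment via Lemma~\ref{lem: find large matching}, and a reduction to Lemma~\ref{lem: rainbow clique with weight function}. You are slightly more careful than the written argument, since you orient the matching lemma with the petals as the side whose row sums are controlled and you force the kernel to avoid $B$. One point worth making explicit is that the standing assumption $k<\frac{\ell^2-1}{2}$ is what ensures each matched $E_{ji}$ is an actual hyperedge: if $m(E_{ji})=0$, then $\sum_{j<i}m(E_{ji})\le (i-2)k<(i-1)\bigl(\binom{\ell}{2}-1\bigr)$ for $3\le i\le \ell$, contradicting the sum bound.
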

\begin{proof}
Suppose, for a contradiction, that there exists a hyperedge $E\in E(\cG)$ with multiplicity at least $\binom{\ell}{2}$.
Suppose $v_1,v_2\in E$. Then, by Lemma~\ref{lem: sumflower}, let $T=\{v_1,v_2\}$, there exists a sunflower in $\cH(\{v_1,v_2\})$ with one core and two petals $P_1^2,P_2^2$ avoiding $E$. And suppose $v_3$ as the core of this sunflower. Then we construct a complete bipartite graph $K_{2,2}$ with sets $A=\{v_1,v_2\}$ and $B=\{P_1^2,P_2^2\}$.
And the weight of the edge $v_iP_j^2$ (denoted by $\omega(v_iP^2_j)$) is the multiplicity of the hyperedge $\{v_j\cup P_j^2\}$.
According to the definition of $\cH(\{v_1,v_2\})$, we have $\sum_{j\in [2]}\omega(v_iP^2_j)\geq 2\left(\binom{\ell}{2}-1 \right)$.
Then, according to Lemma \ref{lem: find large matching}, there is a matching (we may assume it is $v_1P^2_1,v_2P^2_2$) with total weight $2\left(\binom{\ell}{2}-1 \right)$.
Then, we add hyperedges $v_iP^2_i$ to the edge $E$, and form an expansion clique $K_3^{(r)+}$.

Suppose we have found a copy of $K_{i}^{(r)+}$ containing $E$ with core vertices $v_1,v_2,\dots,v_i$.
Then, with a similar argument, there is a sunflower with $i$ petals $P^i_1,\dots,P^i_i$ and core $v_{i+1}$ avoiding $T\cup K_{i}^{(r)+}$ by Lemma \ref{lem: sumflower}.
Then, we construct a complete bipartite graph $K_{i,i}$ as above, and find a matching (we may assume it is $v_jP^i_j$ for $j\in[i]$, with total multiplicity $i\left( \binom{\ell}{2}-1 \right)$.
We do this for $\ell-1$ steps, and find a copy of $K_{\ell}^{(r)+}$ with $\{v_1,\dots,v_{\ell}\}$.
And suppose $E_{i,j}$ is the hyperedges containing $v_i,v_j$ for $2\leq i<j\leq \ell$.
The above process ensures $\sum_{j<i}m(E_{i,j})\geq (i-1)\left( \binom{\ell}{2}-1\right)$ for $i\in[\ell]$, and $E_{1,2}\geq \binom{\ell}{2}$.
Note that here $E_{i,j}$ are both hyperedges, and we can construct a graph $C$ with vertices $v_1,v_2,\dots,v_{\ell}$ and edges $e_{i,j}$ for $2\leq i<j\leq \ell$, with weight function $\omega'(e_{i,j})=m(E_{i,j})$.
And we have $\sum_{j<i}\omega'(e_{i,j})\geq (i-1)\left( \binom{\ell}{2}-1\right)$.
According to Lemma \ref{lem: rainbow clique with weight function},
there is a rainbow copy of $K_\ell$ in $C$, then 
there is a rainbow copy of $K_\ell^{(r)+}$ among the hyperedges $\{E_{i,j}\}_{2\leq i<j\leq \ell}$, which is a contradiction.
\end{proof}

Above Lemma implies that
$\sum_{i\in [k]}e(\cH_i)=\sum_{E\in \cG}m(E)\leq \left(\binom{\ell}{2}-1\right)\binom{n}{r}$.
This completes the proof.
\hfill
$\square$

Here, the above proof implies the following result, which is useful in the proof of Theorem \ref{thm: small-s}.
We call $\cH_1,\dots,\cH_k$ contains a \textit{super rainbow} copy of $\cF$, if for every fixed $i\in[k]$, there is an injection $\phi_i:E(\cF)\to [k]\setminus\{i\}$, such that for every $E\in E(\cF)$, $E\in \cH_{\phi_i(E)}$.

\begin{cor}\label{cor: super rainbow}
Let $3\leq r\leq \ell$ be integers, and $\binom{\ell}{2}<k<\frac{\ell^2-1}{2}$.
Suppose $\cH_1,\dots,\cH_k$ are $r$-uniform graphs on the same vertex set $V$.
If $\sum_{i=1}^ke(\cH_i)> \binom{\ell}{2}\binom{n}{r}$, then $\cH_1,\dots,\cH_k$ contains a super rainbow copy of $K_{\ell}^{(r)+}$.
\end{cor}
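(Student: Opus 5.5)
The plan is to derive the corollary directly from Theorem~\ref{thm: k small rainbow}, applied once for each excluded colour. By definition, a super rainbow copy asks that for every fixed $i\in[k]$ there is a rainbow copy of $K_{\ell}^{(r)+}$ using only the hypergraphs $\cH_j$ with $j\neq i$. It therefore suffices to fix $i$ and show that the family $\{\cH_j\}_{j\in[k]\setminus\{i\}}$ contains a rainbow $K_{\ell}^{(r)+}$. Such a copy gives an injection $\phi_i:E(K_\ell^{(r)+})\to[k]\setminus\{i\}$ with $E\in\cH_{\phi_i(E)}$ for every $E$.

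Fix $i$. Since $\cH_i$ has at most $\binom{n}{r}$ hyperedges, the hypothesis gives
$$\sum_{j\neq i}e(\cH_j)\;\ge\;\sum_{j=1}^k e(\cH_j)-\binom{n}{r}\;>\;\binom{\ell}{2}\binom{n}{r}-\binom{n}{r}=\left(\binom{\ell}{2}-1\right)\binom{n}{r}.$$
The remaining family has $k':=k-1$ members. Because $\binom{\ell}{2}<k<\frac{\ell^2-1}{2}$, we have $\binom{\ell}{2}\le k'<\frac{\ell^2-1}{2}$, so in particular $1\le k'<\frac{\ell^2-1}{2}$ and $\min\{k',\binom{\ell}{2}-1\}=\binom{\ell}{2}-1$. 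Theorem~\ref{thm: k small rainbow}, applied with $k'$ in place of $k$, gives $\exsumr(n,k',K_\ell^{(r)+})=\left(\binom{\ell}{2}-1\right)\binom{n}{r}$ for sufficiently large $n$. The $k'$ hypergraphs exceed this bound, so they contain a rainbow copy of $K_\ell^{(r)+}$, which is the required rainbow copy avoiding colour $i$. Doing this for every $i\in[k]$ yields the super rainbow copy.

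The only point needing care is the meaning of ``sufficiently large $n$''. The threshold in Theorem~\ref{thm: k small rainbow} depends on $\ell$, $r$ and the number of colours. Since $k'$ ranges over finitely many values below $\frac{\ell^2-1}{2}$, one can take $n$ larger than the maximum of these thresholds, and this threshold is uniform over all choices of $i$. Beyond this I do not expect a real obstacle: the corollary is a counting reduction to the extremal theorem, with the strict inequality in the hypothesis absorbing the loss of at most $\binom{n}{r}$ hyperedges from the deleted colour.
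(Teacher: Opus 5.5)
There is a genuine gap, and it sits in your first sentence: you have the quantifiers in the wrong order. In the definition, the copy of $K_\ell^{(r)+}$ is fixed first: its edges $E\in E(\cF)$ are actual $r$-sets that must lie in $\cH_{\phi_i(E)}$. Then, for \emph{every} $i$, there must be an injection $\phi_i$ into $[k]\setminus\{i\}$ on that same copy.

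Your argument only shows that for each $i$ the family $\{\cH_j\}_{j\neq i}$ contains \emph{some} rainbow copy. The copy that Theorem~\ref{thm: k small rainbow} yields for one $i$ need not coincide with the one for another $i$. Used as a black box, an extremal-number statement gives no control over which copy it produces.

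The difference matters. In the proof of Theorem~\ref{thm: small-s}(i), the copy $\cK_1$ is taken first. The excluded colour $v^*\in\bigcap_i C(w_i)$ is then chosen depending on the core vertices $w_1,\dots,w_\ell$ of that copy. Only afterwards is the super rainbow property invoked with $v^*$ excluded. Your per-colour statement cannot be used there.

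What is needed is a single copy that stays rainbow after deleting any one colour. By Hall's theorem this is equivalent to requiring that every nonempty set $S$ of its edges has at least $|S|+1$ colours in the union of its lists.

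The paper obtains this by rerunning the proof of Theorem~\ref{thm: k small rainbow} with every threshold raised by one:
\begin{itemize}
\item the minimum vertex multiplicity becomes $\binom{\ell}{2}\binom{n-1}{r-1}$;
\item ``fits'' means total multiplicity at least $t\binom{\ell}{2}$;
\item the search starts from an edge of multiplicity at least $\binom{\ell}{2}+1$, which exists because the sum exceeds $\binom{\ell}{2}\binom{n}{r}$.
\end{itemize}
The sunflower-and-matching search from Lemma~\ref{lemma: no large multi edge} then produces one copy $\cK$ with $\sum_{j<i}m(E_{i,j})\ge (i-1)\binom{\ell}{2}$. Removing any single $\cH_i$ lowers each multiplicity by at most one. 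Hence the hypotheses of Lemma~\ref{lem: rainbow clique with weight function} still hold for that same $\cK$ with respect to $\{\cH_j\}_{j\neq i}$, for every $i$ simultaneously.

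Your counting is correct: you lose at most $\binom{n}{r}$ edges and still exceed $(\binom{\ell}{2}-1)\binom{n}{r}$. But it proves only the per-colour statement, not the corollary.
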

\begin{proof}
    The proof is similar to the above proof. First, the inequality of the minimum multiple of vertices in $\cG$ is changed from (\ref{eq: mini multi}) to
    $$\min_{v\in V(\cG)} m(v)\geq \binom{\ell}{2}\binom{n-1}{r-1}.$$
    For a $t$-set $T$,
    And we redefine the $(r-1)$-set $A$ fits $T$ if the sum of the multiplicity of hyperedges containing $A$ and one vertex in $T$ is at least $t\binom{\ell}{2}$.
    Then, Lemma \ref{lem: sumflower} still holds.

    Since $\sum_{i=1}^ke(\cH_i)> \binom{\ell}{2}\binom{n}{r}$, there is a hyperedge $E$ with multiplicity at least $\binom{\ell}{2}+1$. Suppose $\{v_1,v_2\}\subseteq E$.
    With the same search process as in Lemma \ref{lemma: no large multi edge}, there is a copy of $K_{\ell}^{(r)+}$ (denoted by $\cK$) containing $E$, with core vertices $\{v_1,v_2,\dots,v_\ell\}$, and for $i\geq 2$,
    $$\sum_{j<i}m(E_{i,j})\geq (i-1)\binom{\ell}{2},$$
    where $E_{i,j}$ is the hyperedge in $\cK$ containing $v_i,v_j$.
    Then, for every $i\in [k]$, let $m_1(E)$ denote the number of hyperedge among $\{\cH_j\}_{j\neq i}$ that contains $E$.
    Then, $m_1(E)\geq m(E)-1$.

    As a result, $m_1(E)\geq \binom{\ell}{2}-1$, and for $i\geq 2$,
    $$\sum_{j<i}m_1(E_{i,j})\geq (i-1)\left(\binom{\ell}{2}-1\right).$$
    Then, similarly according to Lemma \ref{lem: rainbow clique with weight function}, $\cK$ is a rainbow copy of $K_\ell^{(r)+}$ of hypergraphs $\{\cH_j\}_{j\neq i}$. This completes the proof.
\end{proof}

Here, we also give a stability result.

\begin{theorem}\label{thm : stability of rainbow}
    Suppose $\cH_1,\dots,\cH_k$ are $r$-graphs on $n$-vertex set $V$ with no rainbow copy $K_\ell^{(r)+}$, where $\binom{\ell}{2}-1 \leq k<\frac{\ell^2-1}{2}$, and $|\cH_1|\geq \dots\geq |\cH_k|$.
    If $\sum_{i\in[k]}e(\cH_i)\geq (\binom{\ell}{2}-1)\binom{n}{r}-O(n^{r-1})$, then we can delete $O(n^{r-1})$ hyperedges such that all remaining hyperedges is contained exactly in $\cH_1,\dots,\cH_{\binom{\ell}{2}-1}$.
\end{theorem}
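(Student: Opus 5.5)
The plan is to combine the multiplicity bound of Lemma~\ref{lemma: no large multi edge} with a counting argument. The catch is that Lemma~\ref{lemma: no large multi edge} was proved under the minimum-multiplicity assumption (\ref{eq: mini multi}), which we no longer have.

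\textbf{Step 1: pass to a large well-behaved vertex set.} Run the vertex-deletion process from the proof of Theorem~\ref{thm: k small rainbow}, but with a slightly weaker threshold. Repeatedly delete a vertex whose current multiplicity is below $\left(\binom{\ell}{2}-1\right)\binom{m-1}{r-1} - Cn^{r-2}$ for a suitable constant $C$. Write the hypothesis as $\sum_i e(\cH_i)\geq (\binom{\ell}{2}-1)\binom{n}{r}-c_0n^{r-1}$. Each deletion raises the deficit $f(m)=\left(\binom{\ell}{2}-1\right)\binom{m}{r}-e(\cG(m))$ (counted with multiplicity) by less than a fixed amount. Since $f$ can never drop below $-(k-\binom{\ell}{2}+1)\binom{m}{r}$, only $O(1)$ vertices are deleted.

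To make this work I will rather use $\Omega(n^{r-2})$ slack per deletion: each deleted vertex has multiplicity at least $Cn^{r-2}$ below average, so after $D$ deletions the total drops by at least $DCn^{r-2}$ below what the bound allows. Combined with the $c_0n^{r-1}$ deficit, this gives $D=O(n/C)$. That is still too many, so I will let the threshold be $\left(\binom{\ell}{2}-1\right)\binom{m-1}{r-1}-\epsilon n^{r-1}$ instead; then $D\le c_0/\epsilon'=O(1)$.

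Deleting $D=O(1)$ vertices removes only $O(n^{r-1})$ hyperedges, which is affordable. On the remaining set $V'$ every vertex has multiplicity at least $(\binom{\ell}{2}-1-\epsilon')\binom{n-1}{r-1}$.

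\textbf{Step 2: redo Lemmas~\ref{lem: sumflower} and \ref{lemma: no large multi edge} with the weakened degree condition.} The averaging in (\ref{eq: average multi, 1}) now gives an average multiplicity greater than $t(\binom{\ell}{2}-1)-\frac12-t\epsilon'$. This is still above $t(\binom{\ell}{2}-1)-1$ for small $\epsilon'$, and multiplicities are integers. Hence the sets fitting $T$ still number $\Theta(n^{r-1})$. I expect this to be the main obstacle. If a positive fraction of $\cE_1(T)$ has multiplicity at least $t(\binom{\ell}{2}-1)$, the sunflower argument goes through; but integrality alone does not force an average exceeding $N-1$ to yield many entries of size at least $N$ unless the entries are bounded. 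They are bounded by $kt$, so a positive fraction, roughly $\frac{1/2-t\epsilon'}{kt}$, do fit $T$. The greedy construction with Lemma~\ref{lem: find large matching} and Lemma~\ref{lem: rainbow clique with weight function} then shows that every hyperedge inside $V'$ has multiplicity at most $\binom{\ell}{2}-1$.

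\textbf{Step 3: counting.} Inside $V'$, set $q=\binom{\ell}{2}-1$. We have $\sum_{E}m(E)\ge q\binom{n}{r}-O(n^{r-1})$ with every $m(E)\le q$. Consequently all but $O(n^{r-1})$ of the $r$-sets $E$ have $m(E)=q$. The orderings of the $\cH_i$ and the rainbow-freeness must now be used to show that these hyperedges lie in $\cH_1,\dots,\cH_q$ rather than in other index sets. For any $E$ with $m(E)=q$, let $I(E)$ be its index set.

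\begin{itemize}
\item Suppose two hyperedges with different $q$-sets $I(E)\neq I(E')$ both occur $\Omega(n^r)$ times. Then some index outside one set occurs, giving effective multiplicity $q+1$ in the relevant configurations. Repeating the Step~2 construction with one edge drawn from the union of the index sets gives a rainbow $K_\ell^{(r)+}$, by the super-rainbow idea of Corollary~\ref{cor: super rainbow}.
\item So a single $q$-set $I$ carries all but $o(n^r)$ of the hyperedges. Sharpening this to $O(n^{r-1})$ should follow by the same local argument around each exceptional hyperedge, since each one extends to $\Theta(n^{r-1})$ choices.
\end{itemize}

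Finally, since $|\cH_1|\ge\dots\ge|\cH_k|$ and $I$ carries almost everything, the top $q$ hypergraphs must be exactly those indexed by $I$. Deleting the exceptional $O(n^{r-1})$ hyperedges yields the claim.
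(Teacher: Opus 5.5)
\textbf{Steps 1--2.} These are sound in outline. Step~1 is actually more careful than the paper, which simply asserts that (\ref{eq: mini multi}) ``still holds''. There is one slip. The floor $f\ge -(k-\binom{\ell}{2}+1)\binom{m}{r}$ is of order $n^r$, so it only gives $D=O(n)$ deletions. The bound $D\le c_0/\epsilon'$ really needs $f(m)\ge 0$. You get this by applying Theorem~\ref{thm: k small rainbow} to each restricted family, which is still rainbow-free and has $m=n-O(1)$ vertices. With that fix, your averaging in Step~2 works: a fraction of roughly $(1-t\epsilon')/(t(k-q)+1)$ of the $(r-1)$-sets fit $T$. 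So every hyperedge inside $V'$ has multiplicity at most $q=\binom{\ell}{2}-1$.

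\textbf{The main gap: Step 3 has no mechanism for distinguishing index sets.} Once all multiplicities are at most $q$, the tools you propose to reuse cannot tell that two hyperedges have different index sets. Lemma~\ref{lem: find large matching} and Lemma~\ref{lem: rainbow clique with weight function} only track multiplicities. Lemma~\ref{lem: rainbow clique with weight function} also needs a starting pair of multiplicity at least $\binom{\ell}{2}=q+1$, and no such hyperedge exists any more. Corollary~\ref{cor: super rainbow} needs total weight above $\binom{\ell}{2}\binom{n}{r}$. So there is no ``effective multiplicity $q+1$'' edge from which to rerun Step~2.

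The missing idea is the list-colouring (Hall-type) argument of the paper's Claim~\ref{claim: cliques has same list}. Take a copy of $K_\ell^{(r)+}$ whose $q+1$ hyperedges all have multiplicity exactly $q$. Then either all index sets coincide, or the copy is rainbow. To see this, colour the other $q$ hyperedges greedily with distinct colours from their lists. If some colour of $I(E_1)$ is still free, give it to $E_1$. Otherwise the used colours are exactly $I(E_1)$. Then any $E_2$ with $I(E_2)\neq I(E_1)$ has an unused colour $c\notin I(E_1)$. Recolour $E_2$ with $c$ and give $E_1$ the old colour of $E_2$. Hence ``same index set'' propagates along copies of $K_\ell^{(r)+}$ lying inside $\cF=\{E: m(E)=q\}$.

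\textbf{The second gap: the sharpening to $O(n^{r-1})$ is not proven.} Your step from $o(n^r)$ to $O(n^{r-1})$ ``by the same local argument'' is exactly where the real work lies. An exceptional hyperedge can only be forced into a clique together with majority-type hyperedges if its vertices are typical. For example, a vertex whose entire link is of a minority type must simply be discarded.

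The paper avoids your two-stage $\Omega(n^r)$/$o(n^r)$ scheme and handles this directly.
\begin{enumerate}
\item Call $u,v$ a couple if $d_\cF(\{u,v\})\ge Cn^{r-3}$. Only $O(n)$ pairs fail to be couples.
\item Let $A$ be the set of vertices of degree at least $\frac{\ell+4}{\ell+5}n$ in the couple graph. Then $|V\setminus A|=O(1)$.
\item Take two disjoint $E_1,E_2\in\cF$, each meeting $A$ in two vertices. Choose the remaining core vertices in their common couple-neighbourhood. Build two cliques in $\cF$ that share a hyperedge $E_3$.
\item The Claim then gives $I(E_1)=I(E_3)=I(E_2)$. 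If $E_1,E_2$ intersect, pass through a third hyperedge disjoint from both.
\item The only leftover hyperedges are those with multiplicity less than $q$ or meeting $A$ in at most one vertex. There are $O(n^{r-1})$ of them.
\end{enumerate}

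Your final step, using $|\cH_1|\ge\dots\ge|\cH_k|$ to conclude $I=[q]$, is fine once this is in place.
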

\begin{proof}
    According to the lower bound on $\sum_{i\in[k]}e(\cH_i)$, (\ref{eq: mini multi}) still holds. Then, Lemma \ref{lem: sumflower} and Lemma \ref{lemma: no large multi edge} hold.
    This implies every hyperedge is contained in at most $\binom{\ell}{2}-1$ hypergraph of $\cH_1,\dots,\cH_k$.

    Let $\cH$ denote the hyperedges with multiplicity at most $\binom{\ell}{2}-2$. Then
    $$\left(\binom{\ell}{2}-1\right)\binom{n}{r}-O(n^{r-1})\leq\sum_{i\in[k]}e(\cH_i)\leq |\cH|\cdot\left(\binom{\ell}{2}-2 \right)+\left( \binom{n}{r}-|\cH|\right)\cdot\left(\binom{\ell}{2}-1\right).$$

    Then we have $|\cH|=O(n^{r-1})$.
    Collect all the hyperedges with multiplicity $\binom{\ell}{2}-1$ as $\cF$.

    \begin{claim}\label{claim: cliques has same list}
        For every copy of $K_\ell^{(r)+}$ in $\cF$, we claim that all the hyperedges in that copy are contained in the same $\binom{\ell}{2}-1$ hypergraphs.
\end{claim}
    \begin{proof}
    Otherwise, suppose that $E_1$ and $E_2$ are contained in different collections of $\binom{\ell}{2}-1$ hypergraphs. We say a hyperedge contains colour $c$ if it is contained in $\cH_c$. Its colour list is the index set $I$ such that $E\in \cH_c$ for every $c\in I$.
    Next, we prove we can assign $\binom{\ell}{2}$ colours to the copy $K_{\ell}^{(r)+}$, which implies there is a rainbow $K_{\ell}^{(r)+}$.

    We can greedily choose colours for each hyperedge, and $E_1$ is the last hyperedge to choose a colour.
    If the colour list of $E_1$ contains a colour not used for the other $\binom{\ell}{2}-1$ hyperedges, then we can colour $E_1$ with this colour, and find a rainbow copy of $K_{\ell}^{(r)+}$.

    Suppose all colours in the list of $E_1$ have been used for all the other hyperedges. Then, since $E_1$ and $E_2$ have different colour lists,
     there is a colour $c$ in the list of $E_2$ that has not been used for the other $\binom{\ell}{2}-1$ hyperedges besides $E_1$. And let $c'$ be the current colour of $E_2$.
    Then we can change the colour of $E_2$ to $c$ and colour $E_1$ to $c'$. This yields a rainbow $K_{\ell}^{(r)+}$, a contradiction. Therefore, the claim holds.
    \end{proof}

    Notice that $|\cF|=\binom{n}{r}-O(n^{r-1})$. For two vertices $u$ and $v$, we call them a \textit{couple} if $d_{\cF}(\{u,v\})\geq C(n,\ell)n^{r-3}$, where $C(r,\ell)=2\binom{\ell+1}{2}r$ is a constant depending on $r$ and $\ell$. We construct an auxiliary graph $H_\cF$ on vertex set $V(\cF)$, where two vertices $u,v$ are adjacent if $u,v$ are a couple.
    If $u,v$ are not a couple, then at least $\binom{n}{r-2}-C(r,\ell)n^{r-3}$ hyperedges containing $\{u,v\}$ are not in $\cF$. Since there are only $O(n^{r-1})$ $r$-sets in $\binom{V(\cF)}{r}$ that are not in $\cF$, it follows that only $O(n)$ pairs in $\binom{V(\cF)}{2}$ are not couples. Consequently, only $O(n)$ edges are missing from $H_\cF$, and hence $e(H_\cF)\geq \binom{n}{2}-O(n)$.

    Let $A=\{v\in V(H_\cF):d_{H_\cF}(v)\geq \frac{\ell+4}{\ell+5}n\}$.
    Then, we have $|A|=n-O(1)$.
    And let $\cF'=\{E\in \cF:|E\cap A|\geq 2\}$.
    \begin{claim}
        All the hyperedges in $\cF'$ are contained in the same $\binom{\ell}{2}-1$ hypergraphs.
    \end{claim}
    \begin{proof}
        We only need to prove for every two disjoint $E_1,E_2\in \cF'$, they are contained in the same $\binom{\ell}{2}-1$ hypergraphs. Then, since for every intersecting hyperedges $E_1,E_2$, there exists a hyperedge $E_3$ is disjoint with both $E_1$ and $E_2$, $E_1$ and $E_3$, $E_2$ and $E_3$ are both in the same $\binom{\ell}{2}-1$ hypergraphs, so are $E_1$ and $E_2$.

        Now suppose $E_1\cap E_2=\emptyset$, $u_1,u_2\in E_1\cap A$ and $w_1,w_2\in E_2\cap A$.
        By the definition of $H_\cG$, every $\ell+4$ vertices, the number of their common neighbours in $H_\cG$ is at least $$|A|-(\ell+4)\cdot\left(1-\frac{\ell+4}{\ell+5}\right)n\geq \frac{1}{\ell+6}n.$$
        Thus, by a greedy search, there exists a clique $K_{\ell-1}$ with vertices $v_1,\dots,v_{\ell-1}$ in the common neighbours of $\{u_1,u_2,w_1,w_2\}$.

        Since for every edge $uv\in E(H_\cG)$, $u,v$ is contained in at least $C(r,\ell)n^{r-3}$ hyperedges in $\cF$, there exists a clique $K_{\ell+1}^{(r)+}$ (denoted by $\cK_1$) contains $E_1$ and with core $\{u_1,u_2,v_1,\dots,v_{\ell-1}\}$, and a clique $K_{\ell+1}^{(r)+}$ (denoted by $\cK_2$) contains $E_2$ and with core $\{w_1,w_2,v_1,\dots,v_{\ell-1}\}$.

        When $\ell\geq 3$, since the number of hyperedges contains $v_1,v_2$ are at least $C(r,\ell)n^{r-3}$, we may assume that there is a hyperedge $E_3$ containing $v_1,v_2$ are contained in $E(\cK_1)\cap E(\cK_2)$.
        Then, according to Claim \ref{claim: cliques has same list}, $E_i$ and $E_3$ are contained in the same $\binom{\ell}{2}-1$ hypergraphs, for $i=1,2$, and so are $E_1,E_2$.
    \end{proof}

Notice that $e(\cF')\geq \binom{n}{r}-O(n^{r-1})$, which completes the proof.
\end{proof}

In the next part of this section, we deal with the case when $k$ is sufficiently large, relative to $\ell$. We restate Theorem \ref{thm: k large rainbow} here for convenience.

\noindent\textbf{Theorem.}
    \textit{For integers $\ell\geq r\geq 3$, there exists a constant $k_0=k_0(r,\ell)$ such that for all $k\geq k_0$ and sufficiently large $n$, we have}
    $$\exsumr(n,k,K_\ell^{(r)+})= k\cdot t_{r}(n,\ell-1).$$

\noindent\textbf{Proof of Theorem \ref{thm: k large rainbow}.}
Let $\cH_1,\dots,\cH_k$ be $n$-vertex $r$-graphs on the same vertex set $V$ that do not contain a rainbow $K_\ell^{(r)+}$, such that the value $\sum_{i=1}^ke(\cH_i)$ is maximized.
The lower bound is attained by taking $\cH_i\cong T_r(n,\ell-1)$ for all $i\in[k]$. Then we deal with the upper bound.

Let $\cG$ be the $r$-graph whose hyperedges are all $r$-sets with multiplicity at least one among $\cH_1,\dots,\cH_k$.
Let $m(\cG)=\sum_{E \in E(\cG)} m(E)$. Based on the lower bound, we may assume that $m(\cG) \geq k \cdot t_r(n, \ell-1)$.

We partition $E(\cG)$ into bad edges $\cF_1=\{E: m(E) \leq\binom{\ell}{2}-1\}$ and good edges $\cF_2=\{E: m(E)>\binom{\ell}{2}-1\}$.
It follows that
$$
m(\cG)\leq |\cF_1|\cdot \left(\binom{\ell}{2}-1\right) + |\cF_2|\cdot k\leq \binom{n}{r}\cdot \left(\binom{\ell}{2}-1\right) + |\cF_2|\cdot k.
$$
This inequality implies
$$
|\cF_2|\geq t_r(n,\ell-1)-\beta(k)n^{r},
$$
where $\beta(k)=\frac{\left(\binom{\ell}{2}-1\right)}{k\cdot r!}$ vanishes as $k \rightarrow \infty$.

Let $\epsilon>0$ be a sufficiently small constant to be determined later. We choose $k$ large enough so that $\beta(k) \leq \delta$, where $\delta=\delta(\epsilon, \ell, r)$ is the parameter provided by Lemma \ref{lem: stability Turan expansion}. The hierarchy of constants in this proof is thus: $\ell, r \rightarrow \epsilon \rightarrow \delta \rightarrow k_0$, ensuring that $k_0$ (and thus $k$) depends only on $\ell$ and $r$, and is independent of $n$.

Notice that $\cF_2$ has no copy of $K_\ell^{(r)+}$, otherwise we can find a rainbow $K_\ell^{(r)+}$.
According to Lemma~\ref{lem: stability Turan expansion}, there exists a partition of $V(\mathcal{G}) = V_1 \cup \dots \cup V_{\ell-1}$ such that $\mathcal{F}_2$ is $\epsilon n^r$-close to the complete $(\ell-1)$-partite $r$-graph $\mathcal{K}$ on these parts. Moreover, each part $V_i$ satisfies $\left|\, |V_i| - \frac{n}{\ell-1}\,\right| \leq \epsilon^{1/2}n$ for $i \in [\ell-1]$.

Let $\mathcal{E}_1$ and $\mathcal{E}_2$ denote the collections of hyperedges in $\mathcal{F}_1 \setminus \mathcal{K}$ and $\mathcal{F}_2 \setminus \mathcal{K}$ respectively. Similarly, let $\mathcal{M} = \mathcal{K} \setminus \mathcal{F}_2$ be the collection of missing edges in $\mathcal{F}_2$ relative to $\mathcal{K}$. By the stability property, we have $|\mathcal{E}_2| \leq \epsilon n^r$ and $|\mathcal{M}| \leq \epsilon n^r$. For each hyperedge $E \in \mathcal{K}$, we define $m'(E) = k - m(E)$ as the missing multiplicity of $E$. Our goal is to establish the following inequality:
\begin{equation}\label{eq: missing degree goal}
	\sum_{E \in \mathcal{E}_1 \cup \mathcal{E}_2} m(E) < \sum_{E \in \mathcal{M}} m'(E).
\end{equation}
If this holds, we can replace all hyperedges in $\mathcal{E}_1 \cup \mathcal{E}_2$ with missing edges from $\mathcal{K}$ across the collection $\{\mathcal{H}_i\}_{i=1}^k$ without decreasing the total number of edges.

Let $X = \{v \in V(\mathcal{G}) : d_{\mathcal{M}}(v) \geq 2\epsilon^{1/2} n^{r-1}\}$ be the set of vertices that are contained in a large number of missing hyperedges from $\mathcal{M}$. A simple counting argument yields $|X| \leq \frac{r|\mathcal{M}|}{2\epsilon^{1/2}n^{r-1}} \leq \frac{r}{2} \epsilon^{1/2} n$. For each $i \in [\ell-1]$, we define $V_i' = V_i \setminus X$ and $X_i = V_i \cap X$.

We say that two vertices $u_i \in V_i'$ and $u_j \in V_j'$ ($i \neq j$) are \textit{coupled} if they are contained in at least $r \binom{\ell}{2} n^{r-3}$ hyperedges of $\mathcal{F}_2 \cap \mathcal{K}$. Accordingly, we define an $(\ell-1)$-partite auxiliary graph $H_{\mathcal{G}}$ on $V_1' \cup \dots \cup V_{\ell-1}'$, where an edge exists between $u_i \in V_i'$ and $u_j \in V_j'$ if and only if they are coupled. For any $u \in V_i'$ and $j \neq i$, we denote by $C_j(u) = N_{H_{\mathcal{G}}}(u) \cap V_j'$ the set of vertices in $V_j'$ coupled with $u$. By construction, $d_{\mathcal{F}_2 \cap \mathcal{K}}(\{u,v\}) \geq r \binom{\ell}{2} n^{r-3}$ for every $v \in C_j(u)$.

\begin{claim}
    For every $u\in V_i'$ and $i\neq j$, the size of $C_j(u)$ satisfies
\end{claim}
    \begin{equation}\label{eq: size of Cj(u)}
        |C_j(u)|\geq \left(\frac{1}{\ell-1}-2 \cdot\epsilon^{\frac{1}{2}}\right)n.
    \end{equation}
        \begin{proof}
             For every $u_j\in V_j'\setminus C_j(u)$, the number of missing hyperedges in $\mathcal{M}$ containing the pair $\{u, u_j\}$ is at least 
             $$
             d_\cK(\{u,u_j\})-d_{\cF_2\cap \cK}(\{u,u_j\})\geq \binom{\ell-3}{r-2}\left(\frac{n}{2(\ell-1)}\right)^{r-2}-r \binom{\ell}{2}n^{r-3}>\left(\frac{n}{2(\ell-1)}\right)^{r-2}.
             $$ 
             Since $u\in V_i'$, we have $d_\cM(u)\leq 2\epsilon^{\frac{1}{2}} n^{r-1}$. Moreover, since there are no hyperedges in $\cM$ containing two vertices in $V_j$, the size of $V_j'\setminus C_j(u)$ is at most $\frac{2\epsilon^{\frac{1}{2}} n^{r-1}}{(\frac{n}{2(\ell-1)})^{r-2}}$.  
             Thus,
             $$|C_j(u)|=|N_{H_\cG}(u)\cap V^{\prime}_j|\geq |V_j'|-\frac{2\epsilon^{\frac{1}{2}} n^{r-1}}{(\frac{n}{2(\ell-1)})^{r-2}}\geq \frac{n}{\ell-1}-\epsilon^{\frac{1}{3}}n,$$
             when $\epsilon<\left(\frac{1}{r+1+(2(\ell-1))^{r-2}}\right)^{6}$.
\end{proof}

The preceding claim implies the following crucial property regarding the common neighbors in the auxiliary graph $H_{\mathcal{G}}$.

\begin{claim}\label{claim: finding common neighbour}
	Let $I \subset [\ell-1]$ be a set of indices and $S \subseteq \bigcup_{i \in I} V_i'$ be a set of vertices with $|S| < r \binom{\ell}{2}$. For every $j \notin I$, the size of the common neighborhood of $S$ in $V_j'$ in  graph $H_{\mathcal{G}}$ satisfies
	\begin{equation}
		\left| \bigcap_{u \in S} C_j(u) \right| \geq \left( \frac{1}{\ell-1} - 2r \binom{\ell}{2} \epsilon^{1/3} \right) n.
	\end{equation}
    \end{claim}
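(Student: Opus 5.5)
This is a union bound over the vertices of $S$, using the previous claim (inequality \eqref{eq: size of Cj(u)}) for each $u\in S$.

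Fix $j\notin I$. Every $u\in S$ lies in some $V_i'$ with $i\in I$, so $i\neq j$. The previous claim then gives $C_j(u)\subseteq V_j'$ together with the lower bound on $|C_j(u)|$. More precisely, its proof shows
$$|V_j'\setminus C_j(u)|\leq \frac{2\epsilon^{1/2}n^{r-1}}{\bigl(\frac{n}{2(\ell-1)}\bigr)^{r-2}}=2\bigl(2(\ell-1)\bigr)^{r-2}\epsilon^{1/2}\,n\leq \epsilon^{1/3}n,$$
where the last inequality uses the smallness assumption on $\epsilon$ made there.

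Next, write
$$V_j'\setminus \bigcap_{u\in S}C_j(u)=\bigcup_{u\in S}\bigl(V_j'\setminus C_j(u)\bigr),$$
so that
$$\Bigl|\bigcap_{u\in S}C_j(u)\Bigr|\geq |V_j'|-|S|\,\epsilon^{1/3}n .$$

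It remains to bound $|V_j'|$ from below. We have $|V_j'|\geq |V_j|-|X|$. By stability, $|V_j|\geq \bigl(\tfrac{1}{\ell-1}-\epsilon^{1/2}\bigr)n$, and $|X|\leq \tfrac r2\epsilon^{1/2}n$. Since $\epsilon$ is small, $(1+\tfrac r2)\epsilon^{1/2}\leq \epsilon^{1/3}$, hence
$$|V_j'|\geq \Bigl(\frac{1}{\ell-1}-\epsilon^{1/3}\Bigr)n.$$

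Finally, use $|S|<r\binom{\ell}{2}$, so $|S|\leq r\binom{\ell}{2}-1$. Combining,
$$\Bigl|\bigcap_{u\in S}C_j(u)\Bigr|\geq \Bigl(\frac{1}{\ell-1}-\epsilon^{1/3}-\bigl(r\tbinom{\ell}{2}-1\bigr)\epsilon^{1/3}\Bigr)n=\Bigl(\frac{1}{\ell-1}-r\tbinom{\ell}{2}\epsilon^{1/3}\Bigr)n,$$
which is stronger than the stated bound with constant $2r\binom{\ell}{2}$.

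The only point needing care is the bookkeeping of the $\epsilon$-powers, i.e. that the excluded set for each $u$ has size at most $\epsilon^{1/3}n$ rather than merely satisfying the weaker lower bound on $|C_j(u)|$ as stated. If one prefers to use only the stated form of \eqref{eq: size of Cj(u)}, the same argument works with each missing set bounded by $|V_j'|-|C_j(u)|\leq \tfrac{n}{\ell-1}+\epsilon^{1/2}n-\bigl(\tfrac{1}{\ell-1}-2\epsilon^{1/2}\bigr)n\leq 2\epsilon^{1/3}n$, which is where the factor $2$ in the statement comes from.
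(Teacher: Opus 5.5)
Your proof is correct and is essentially the paper's own argument: bound each excluded set $V_j'\setminus C_j(u)$ by $\epsilon^{1/3}n$ using the counting inside the proof of the preceding claim, take a union bound over $u\in S$, and subtract from a lower bound on $|V_j'|$. Your main route is the right one, because the preceding claim's displayed bound $(\frac{1}{\ell-1}-2\epsilon^{1/2})n$ is stronger than what its proof establishes, so your fallback that relies only on that displayed form is less safe. Your sharper estimate $|V_j'|\ge(\frac{1}{\ell-1}-\epsilon^{1/3})n$ even gives the constant $r\binom{\ell}{2}$; the paper instead uses the cruder $|V_j'|\ge\frac{n}{\ell-1}-(r+1)\epsilon^{1/2}n$ and absorbs it into the factor $2$.
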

\begin{proof}
	For each $u \in S$, we have $|V_j' \setminus C_j(u)| \leq \epsilon^{1/3} n$. We obtain
	$$
	\left| V_j' \setminus \bigcap_{u \in S} C_j(u) \right| = \left| \bigcup_{u \in S} (V_j' \setminus C_j(u)) \right| \leq \sum_{u \in S} |V_j' \setminus C_j(u)| < r \binom{\ell}{2} \epsilon^{1/3} n.
	$$
	Since $|V_j'| \geq \frac{n}{\ell-1} - (r+1)\epsilon^{1/2} n$, we have $|\bigcap_{u \in S} C_j(u)|\geq \left(\frac{1}{\ell-1} - 2r \binom{\ell}{2} \epsilon^{1/3}\right) n$.
\end{proof}

\begin{claim}\label{claim: intersect with one vertex}
    For every hyperedge $E \in \mathcal{E}_1 \cup \mathcal{E}_2$, we have $|E \cap V_i'| \leq 1$ for each $i \in [\ell-1]$.
\end{claim}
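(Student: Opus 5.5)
The plan is to argue by contradiction. Suppose some $E\in\cE_1\cup\cE_2$ has two vertices $u_1,u_2\in E\cap V_1'$, relabelling parts if needed. From this configuration I would build a copy of $K_\ell^{(r)+}$ that contains $E$ and whose other $\binom{\ell}{2}-1$ hyperedges are good hyperedges of $\cF_2\cap\cK$. Since $m(E)\ge 1$ and every good hyperedge has multiplicity at least $\binom{\ell}{2}$, we can colour $E$ first with any colour in its list. Then we colour the remaining hyperedges greedily: each one has at least $\binom{\ell}{2}$ available colours, and at most $\binom{\ell}{2}-1$ have been used. This produces a rainbow $K_\ell^{(r)+}$, which is a contradiction. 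So the whole task is to embed the expansion with core $\{u_1,u_2,v_3,\dots,v_\ell\}$, where $u_1u_2$ is expanded by $E$ itself.

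\textbf{Choosing the other core vertices.} These are $\ell-2$ vertices $v_3,\dots,v_\ell$, one in each of $V_2',\dots,V_{\ell-1}'$ (that is $\ell-2$ parts, which matches). Every pair among $u_1,u_2,v_3,\dots,v_\ell$ other than $\{u_1,u_2\}$ must be coupled in $H_\cG$. I would choose them sequentially using Claim~\ref{claim: finding common neighbour}. At step $j$, take $S=\{u_1,u_2,v_3,\dots,v_{j-1}\}$ together with the at most $r$ vertices of $E$ lying in the primed parts; this has size $<r\binom{\ell}{2}$. Pick $v_j$ in the common neighbourhood of $S$ inside the next unused part $V'$, avoiding $E$. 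This is possible because that neighbourhood has linear size. A subtle point is that $u_1,u_2$ both lie in $V_1'$, so they need not be coupled with each other. Claim~\ref{claim: finding common neighbour} only requires $S\subseteq\bigcup_{i\in I}V_i'$ and $j\notin I$, so this is fine.

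\textbf{Expanding the pairs.} Each coupled pair $\{x,y\}$ lies in at least $r\binom{\ell}{2}n^{r-3}$ hyperedges of $\cF_2\cap\cK$. We process the $\binom{\ell}{2}-1$ pairs in turn and greedily choose a hyperedge $F_{xy}\supseteq\{x,y\}$ avoiding all previously used vertices other than $x,y$. The set of used vertices always has size at most $r\binom{\ell}{2}$. Each forbidden vertex $w$ kills at most $\binom{n}{r-3}\le n^{r-3}$ hyperedges through $\{x,y,w\}$. Hence at most $r\binom{\ell}{2}n^{r-3}$ hyperedges are excluded, and I must check that the coupling threshold is strict enough; if needed, the constant in the definition of coupled can be bumped up. For $r=3$, each hyperedge through $\{x,y\}$ is determined by its third vertex, so the count is simply "at least $3\binom{\ell}{2}$ choices minus fewer forbidden ones". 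The resulting hyperedges together with $E$ form $K_\ell^{(r)+}$: the inserted vertices are pairwise distinct and disjoint from the core by construction.

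\textbf{Main obstacle.} The delicate step is this final disjointness count. The coupling threshold $r\binom{\ell}{2}n^{r-3}$ must strictly exceed the number of hyperedges through $\{x,y\}$ that hit the forbidden set, which has size up to about $\ell+(r-2)\binom{\ell}{2}$. I expect this to hold with a little slack, since the forbidden set is smaller than $r\binom{\ell}{2}$ and $\binom{n}{r-3}<n^{r-3}$. If the margin turns out to be too thin, I would instead first fix the core and then pick the inserted vertices via a sunflower argument, as in Lemma~\ref{lem: sumflower}.
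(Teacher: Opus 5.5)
Your proposal is correct and follows essentially the same route as the paper. You pick one core vertex in each of $V_2',\dots,V_{\ell-1}'$ inside common neighbourhoods of $H_{\cG}$ via Claim~\ref{claim: finding common neighbour}, let $E$ expand $\{u_1,u_2\}$, and expand every other coupled pair greedily in $\cF_2\cap\cK$ using the $r\binom{\ell}{2}n^{r-3}$ threshold; your count of at most $\ell-2+(r-2)(\binom{\ell}{2}-1)<r\binom{\ell}{2}$ forbidden vertices confirms the margin suffices. You then colour greedily starting from $E$.

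The only slip is putting the vertices of $E$ into $S$. One of them may lie in the target part, where Claim~\ref{claim: finding common neighbour} does not apply. This is unnecessary anyway: avoiding $E$ already follows from the linear size of the common neighbourhood of $\{u_1,u_2,v_3,\dots,v_{j-1}\}$, as you also note.
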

\begin{proof}
    Suppose, to the contrary, that there exists a hyperedge $E$ such that $|E \cap V_1'| \geq 2$. Let $u_0, u_1 \in E \cap V_1'$ be two distinct vertices. We shall show that there exists a copy of $K_\ell^{(r)+}$ in $\mathcal{G}$ using the hyperedge $E \in \mathcal{F}_1 \cup \mathcal{F}_2$ and $\binom{\ell}{2}-1$ additional hyperedges from $\mathcal{F}_2$. 
    Since $E \in \mathcal{F}_1 \cup \mathcal{F}_2$ and all other $\binom{\ell}{2}-1$ hyperedges belong to $\mathcal{F}_2$ with multiplicity at least $\binom{\ell}{2}$, a simple greedy assignment of distinct graph indices yields a rainbow $K_\ell^{(r)+}$.

    We proceed by induction to construct a sequence of clique expansions. Suppose we have already found a copy of $K_i^{(r)+}$ containing $E$, with two core vertices $u_0, u_1 \in V_1'$ and $i-2$ additional core vertices $u_j \in V_j'$ for $j=2, \dots, i$, such that all hyperedges except $E$ belong to $\mathcal{F}_2$. 

    Then, we will enlarge this clique to $K_{i+1}^{(r)+}$. According to Claim \ref{claim: finding common neighbour}, there is a vertex $u_{i+1}$ in $V_{i+1}$ with $u_{i+1}\in \bigcap_{j=1}^i C_{i+1}(u_j)$.
    By the definition of $C_{i+1}(u_j)$, for every $u_j$, we have $d_{\cF_2\cap \cK}(\{u_j,u_{i+1}\})\geq r \binom{\ell}{2}n^{r-3}$, then
    we can greedily choose hyperedges in $\cF_2\cap \cK$ containing $u_{i+1}u_j$, and disjoint with other vertices in clique $K_i^{(r)+}$. This forms a copy of $K_{i+1}^{(r)+}$.
    Thus, by repeating this process, we can find a copy of $K_\ell^{(r)+}$ containing $E$.
    \end{proof}

For every $x\in X_i$, let $\cA_x$ denote the collection of hyperedges in $\cE_1\cup \cE_2$ containing $x$ and no other vertex in $X$.
And let $\cB_x$ denote the collection of hyperedges in $\cE_1\cup \cE_2$ containing $x$ and at least one other vertex in $X_i$.
By Claim \ref{claim: intersect with one vertex}, we have $\bigcup_{x\in X}(\cA_x\cup \cB_x)=\cE_1\cup \cE_2$.


It is clear that
\begin{equation}\label{eq: Bx}
    |\mathcal{B}_x| \leq |X| n^{r-2} \leq \frac{r}{2} \epsilon^{1/2} \left(\frac{1}{\ell-1}-\epsilon^{1/3}\right)^{r-2}n^{r-2}
\end{equation} for each $x \in X$. Let $\mathcal{B}$ be the collection of hyperedges in $\mathcal{E}_1 \cup \mathcal{E}_2$ that contain at least two vertices from $X$, and then we have the bound $|\mathcal{B}| \leq \binom{|X|}{2} n^{r-2} \leq |X|^2 n^{r-2}$. 

To complete the structural analysis, first we suppose that there is a hyperedge $E\in \cA_x$ containing $x \in X_1$ and $v_1 \in V_1'$.

    

\begin{claim}\label{claim: not all in Ci(x)}
	For every copy of $K_{\ell-1}^{(r)+}$ with core vertices $\{v_1, u_2, \dots, u_{\ell-1}\}$ where $v_1 \in V_1'$ and $u_i \in V_i'$, and it intersects $E$ with $v_1$. Then, there exists at least one vertex $u_i$ such that $u_i \notin C_i(x)$, which implies $d_{\mathcal{F}_2 \cap \mathcal{K}}(\{x, u_i\}) < r \binom{\ell}{2} n^{r-3}$.
\end{claim}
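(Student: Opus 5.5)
We argue by contradiction, assuming every $u_i\in C_i(x)$ for $i=2,\dots,\ell-1$, and build a rainbow $K_\ell^{(r)+}$ with core $\{x,v_1,u_2,\dots,u_{\ell-1}\}$. This mirrors the proof of Claim~\ref{claim: intersect with one vertex}. The main point is that $x$ and $v_1$ both lie in the ``first'' class: $x\in X_1\subseteq V_1$ and $v_1\in V_1'$. So the pair $\{x,v_1\}$ is covered by the hyperedge $E$, which is not in $\mathcal{K}$, while every other pair is covered by a hyperedge of $\mathcal{F}_2$.

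\emph{Step 1: the given copy of $K_{\ell-1}^{(r)+}$.} Fix the given copy $\mathcal{K}_0$ of $K_{\ell-1}^{(r)+}$ with core $\{v_1,u_2,\dots,u_{\ell-1}\}$. As in the inductive construction of Claim~\ref{claim: intersect with one vertex}, we may take its $\binom{\ell-1}{2}$ hyperedges from $\mathcal{F}_2\cap\mathcal{K}$, each with multiplicity at least $\binom{\ell}{2}$. By hypothesis $\mathcal{K}_0$ meets $E$ only in $v_1$; in particular $x\notin V(\mathcal{K}_0)$.

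\emph{Step 2: joining $x$ to the core.} For each $i=2,\dots,\ell-1$ we have $u_i\in C_i(x)$, so $d_{\mathcal{F}_2\cap\mathcal{K}}(\{x,u_i\})\ge r\binom{\ell}{2}n^{r-3}$.

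1. The vertices already used (those of $\mathcal{K}_0$, those of $E$, and those of earlier choices) number fewer than $r\binom{\ell}{2}$.
2. Each used vertex $w$ lies in at most $n^{r-3}$ hyperedges containing $\{x,u_i,w\}$.
3. Hence, picking greedily in turn for $i=2,\dots,\ell-1$, there is a hyperedge $E_{x,u_i}\in\mathcal{F}_2\cap\mathcal{K}$ containing $x$ and $u_i$ that avoids all previously used vertices.
4. The pair $\{x,v_1\}$ is covered by $E$, which is disjoint from everything else apart from $x$ and $v_1$.

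Together with $\mathcal{K}_0$, these hyperedges form a copy of $K_\ell^{(r)+}$ with core $\{x,v_1,u_2,\dots,u_{\ell-1}\}$.

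\emph{Step 3: rainbow colouring.} The copy has $\binom{\ell}{2}$ hyperedges. Give $E$ any index $c$ with $E\in\mathcal{H}_c$; this is possible since $E\in\mathcal{F}_1\cup\mathcal{F}_2$ has multiplicity at least one. Every other hyperedge has multiplicity at least $\binom{\ell}{2}$ and at most $\binom{\ell}{2}-1$ indices are already used, so a greedy assignment of distinct indices succeeds. This gives a rainbow $K_\ell^{(r)+}$, a contradiction. So some $u_i\notin C_i(x)$, and by the definition of $C_i(x)$ this means $d_{\mathcal{F}_2\cap\mathcal{K}}(\{x,u_i\})<r\binom{\ell}{2}n^{r-3}$.

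\emph{Main obstacle.} The delicate point is that $x\in X$, so $C_i(x)$ was only defined for vertices of $V_i'$, and its size bound (\ref{eq: size of Cj(u)}) fails for $x$. The claim does not use any size bound for $C_i(x)$, only the pairwise codegree condition, so I would first make explicit that $C_i(x)$ means the set of $u\in V_i'$ with $d_{\mathcal{F}_2\cap\mathcal{K}}(\{x,u\})\ge r\binom{\ell}{2}n^{r-3}$. A second point to check is that $\mathcal{K}_0$ can indeed be taken with all hyperedges in $\mathcal{F}_2$, disjoint from $E\setminus\{v_1\}$; this follows from the statement's setup or from the construction of Claim~\ref{claim: intersect with one vertex}.
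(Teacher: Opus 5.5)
Your proposal is correct and follows essentially the same route as the paper: assuming every $u_i\in C_i(x)$, you greedily attach hyperedges of $\mathcal{F}_2\cap\mathcal{K}$ through the pairs $\{x,u_i\}$, use $E$ for the pair $\{x,v_1\}$, and obtain a rainbow colouring greedily because every hyperedge other than $E$ has multiplicity at least $\binom{\ell}{2}$. The two points you flag, that $C_i(x)$ must be read as the codegree condition for $x\in X$ and that the given copy of $K_{\ell-1}^{(r)+}$ must have all its hyperedges in $\mathcal{F}_2$, are exactly the assumptions the paper's short proof uses implicitly.
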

\begin{proof}
	Otherwise, an analogous greedy construction to that in the proof of Claim~\ref{claim: intersect with one vertex} yields a copy of $K_\ell^{(r)+}$ containing the hyperedge $E \in \mathcal{A}_x$. Since all hyperedges in this copy besides $E$ belong to $\mathcal{F}_2$ and thus have multiplicity at least $\binom{\ell}{2}$, a rainbow $K_\ell^{(r)+}$ can be formed. This contradicts the rainbow $K_\ell^{(r)+}$-free property of $\mathcal{G}$ and completes the proof.
\end{proof}

This implies that at least one of $i\in [\ell-1]\setminus \{1\}$ such that $|V_i'\setminus C_i(x)|\geq \frac{n}{2(\ell-1)}$. Otherwise, according to Claim \ref{claim: finding common neighbour}, we can find a clique $K_{\ell-1}^{(r)+}$ containing $v_1$ and avoiding other $r-1$ vertices in $E$, and with core vertices in $C_i(x)$. A contradiction with Claim \ref{claim: not all in Ci(x)}.

Without loss of generality, we assume $|V_2' \setminus C_2(x)| \geq \frac{n}{2(\ell-1)}$. For every $u_2 \in V_2' \setminus C_2(x)$, we consider the sum of the missing multiplicities of hyperedges in $\mathcal{K}$ containing both $u_2$ and $x$. It follows that
$$
\sum_{E \in \mathcal{K}, \{u_2,x\} \subseteq E} m'(E) \geq \left( k - \binom{\ell}{2} + 1 \right) \left( d_{\mathcal{K}}(x, u_2) - r \binom{\ell}{2} n^{r-3} \right) \geq k \cdot c(r,r) n^{r-2}
$$
for some constant $c(r,\ell)$ provided that $k > 2\binom{\ell}{2}$. Summing over all choices of $u_2 \in V_2' \setminus C_2(x)$, the total missing multiplicity of hyperedges containing $x$ satisfies
$$
\sum_{u_2 \in V_2' \setminus C_2(x)} \sum_{E \in \mathcal{K}, \{u_2,x\} \subseteq E} m'(E) \geq \frac{k \cdot c(r,\ell)}{2(\ell-1)} n^{r-1}.
$$

We may assume that the partition $V_1,\dots,V_{\ell-1}$ is chosen such that the total sum of missing multiplicities of hyperedges containing vertices in $X$ is minimized.
Notice for $x\in X_1$, every $E\in \cA_x$ contains at most one vertex in $V_i$ for $i\geq2$, and there exists $j\geq 2$ such that $E\cap V_j=\emptyset$.
Then for every $i\neq 1$, we have
\begin{equation}\label{eq: move vertex}
    \sum_{x\in E\in \cK\cap \cF, E\cap V_i\neq \emptyset}m(E)\geq \sum_{E\in \cA_x, E\cap V_i=\emptyset}m(E)-\sum_{E\in \cB_x}m(E).   
\end{equation}
If this inequality were violated, moving $x$ from $V_1$ to $V_i$ would decrease the total missing multiplicity, which contradicts the minimality of our chosen partition.

Let $\mathcal{A}_x^{\prime}$ denote the collection of hyperedges in $\mathcal{A}_x$ with multiplicities at least $\binom{\ell}{2}$.
\begin{claim}\label{clm: extral degree total}
    We have 
    $$\sum_{E\in \cA_x'}m(E)\leq \frac{k\cdot c(r,\ell)}{4(\ell-1)} n^{r-1}.$$
\end{claim}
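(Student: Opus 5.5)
We bound $\sum_{E\in\cA_x'}m(E)$ by first bounding the number $|\cA_x'|$ and then using the trivial bound $m(E)\le k$ for each such edge. So it suffices to show $|\cA_x'|\le \frac{c(r,\ell)}{4(\ell-1)}n^{r-1}$, or better, $|\cA_x'|=o(n^{r-1})$ as $\epsilon\to0$. Every $E\in\cA_x'$ belongs to $\cF_2$, since its multiplicity is at least $\binom{\ell}{2}$, and does not lie in $\cK$. So $\cA_x'\subseteq \cE_2$. Since $E$ contains no vertex of $X$ other than $x$, Claim~\ref{claim: intersect with one vertex} shows that $E\setminus\{x\}$ meets each $V_i'$ in at most one vertex. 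Because $E\notin\cK$ and $E\setminus\{x\}$ is a transversal-type set, $x$ must lie in a part already hit by $E\setminus\{x\}$ (here $V_1$, the part containing $x$). That is, $E$ contains some $v\in V_1'$. So each $E\in\cA_x'$ is of exactly the type treated in Claim~\ref{claim: not all in Ci(x)}.

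The main step is to show that $\cA_x'$ forms a ``sparse'' family, by exploiting that its edges are of high multiplicity. Let $E\in\cA_x'$ contain $v_1\in V_1'$, and suppose $E$ meets some part $V_j$ with $j\ge2$ in a vertex $w$. I will build a $K_\ell^{(r)+}$ using $E$ as the hyperedge joining the core pair $\{x,v_1\}$ differently: take core vertices $x$ and $u_2\in V_2'$, etc. The approach I plan is to consider the link $(r-1)$-graph $L_x=\{E\setminus\{x\}:E\in\cA_x'\}$ on $V\setminus X$. Every edge of $L_x$ lies in $\cF_2$ together with $x$.

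If $|L_x|\ge \eta n^{r-1}$, then Theorem~\ref{thm: sunflower} (with one-vertex kernel) gives, for any bounded forbidden set $B$, sunflowers in $L_x$. Combined with the high multiplicity ($\ge\binom{\ell}{2}$) of these edges, these let me treat $x$ as a genuine core vertex adjacent to many vertices $y$ in each part $V_i'$, $i\ge2$. Here $y$ ranges over the kernels, and the number of such kernels must be linear, by a counting/supersaturation version. More precisely, pigeonholing over parts gives $\Omega(\eta n)$ vertices $y\in V_j'$ with $d_{L_x}(y)\ge \Omega(\eta n^{r-2})$. Combined with the vertices of $C_i(x)$ for the other parts, I then greedily find core vertices $u_2,\dots,u_{\ell-1}$ in the common neighborhoods given by Claim~\ref{claim: finding common neighbour}, together with one extra vertex $v_1\in V_1'$ joined to $x$ by an edge of $\cA_x'$. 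This produces a $K_\ell^{(r)+}$ all of whose edges lie in $\cF_2$, hence a rainbow copy, which is a contradiction.

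The obstacle is that for $x$ we only know $C_i(x)$ is large for \emph{some} $i$, while Claim~\ref{claim: not all in Ci(x)} forces $V_2'\setminus C_2(x)$ to be large. So the edges of $\cA_x'$ must supply the missing adjacency from $x$ into $V_2'$ (or into $V_1'$). Two routes handle this. First, inequality~\eqref{eq: move vertex} with $i=2$ bounds the $\cA_x$-mass avoiding $V_2$ by the $\cK\cap\cF$-mass through $x$ meeting $V_2$ plus $\sum_{\cB_x}m$. The latter is $O(k\epsilon^{1/2}n^{r-2})$ by~\eqref{eq: Bx}, which is negligible. Second, the $\cA_x'$-edges meeting $V_2$ are handled by the sunflower argument above: if there were $\eta n^{r-1}$ of them, $x$ would gain linearly many $V_2'$-neighbours via $\cF_2$-edges, and the greedy embedding yields a rainbow $K_\ell^{(r)+}$.

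Finally, choose $\eta=\frac{c(r,\ell)}{8(\ell-1)}$, and use the smallness of $\epsilon$ to absorb the $O(\epsilon^{1/3})$ error terms. Adding the two contributions then gives $\sum_{E\in\cA_x'}m(E)\le \frac{k c(r,\ell)}{4(\ell-1)}n^{r-1}$. The delicate step I expect is making the first route rigorous, since the right side of~\eqref{eq: move vertex} involves $\cK\cap\cF$-mass through $x$ meeting $V_2$. This must be compared with the missing multiplicity lower bound derived just before, and I would try to show it is at most $k$ times the number of $\cK$-edges through $x$ and $C_2(x)$, which is small relative to $n^{r-1}$ only after using that $|V_2'\setminus C_2(x)|\ge\frac{n}{2(\ell-1)}$.
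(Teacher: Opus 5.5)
There is a genuine gap, at the step you yourself flagged, and for $\ell>r$ it breaks both routes.

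\emph{Route~1.} You use \eqref{eq: move vertex} at the single part $V_2$ as an \emph{upper} bound. So you need the $\cK\cap\cF$-mass of edges through $x$ meeting $V_2$ to be at most about $\frac{kc(r,\ell)}{8(\ell-1)}n^{r-1}$. The fact $|V_2'\setminus C_2(x)|\ge\frac{n}{2(\ell-1)}$ does not give this: it still allows $|C_2(x)|$ to be roughly $\frac{n}{2(\ell-1)}$. Each $w\in C_2(x)$ may carry mass up to $k\,d_{\cK}(\{x,w\})$, and $d_{\cK}(\{x,w\})\ge c(r,\ell)n^{r-2}$ by the way $c(r,\ell)$ is defined. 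So this term can reach about $\frac{kc(r,\ell)}{2(\ell-1)}n^{r-1}$, four times your budget.

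\emph{Route~2.} Every edge of $\cA_x'$ consists of $x$, one vertex of $V_1'$, and $r-2$ vertices in distinct parts. It therefore meets only $r-2$ of the $\ell-2$ parts $V_2,\dots,V_{\ell-1}$. Hence the $\cA_x'$-edges alone cannot give $x$ linearly many $\cF_2$-neighbours in every part. The phrase ``combined with $C_i(x)$ for the other parts'' presupposes a lower bound on $|C_i(x)|$ that you never prove; the only information available is that $V_2'\setminus C_2(x)$ is large. Without such neighbours in all of $V_2,\dots,V_{\ell-1}$, the greedy embedding of $K_\ell^{(r)+}$ with core $x,v,u_2,\dots,u_{\ell-1}$ fails. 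Only when $\ell=r$, where every $\cA_x'$-edge meets all parts, is Route~2 self-sufficient.

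The missing idea is to apply \eqref{eq: move vertex} in the opposite direction, at the parts the $\cA_x'$-edges \emph{avoid}. This requires working with mass rather than with $|\cA_x'|$.

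Assume the mass exceeds $\frac{kc(r,\ell)}{4(\ell-1)}n^{r-1}$. Pigeonhole to a fixed set $P$ of $r-2$ parts among $V_2,\dots,V_{\ell-1}$ such that the $\cA_x'$-edges meeting exactly the parts in $P$ carry mass at least $\beta kn^{r-1}$, for some $\beta=\beta(r,\ell)>0$.

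For $j\notin P\cup\{1\}$, these edges avoid $V_j$. So \eqref{eq: move vertex} yields $\cK\cap\cF$-mass at least $\beta kn^{r-1}-O(k\epsilon^{1/2}n^{r-1})$ on edges through $x$ meeting $V_j$. Since $\cF_1$-edges contribute at most $(\binom{\ell}{2}-1)n^{r-1}$, for large $k$ there are $\Omega(n^{r-1})$ such edges in $\cK\cap\cF_2$. Codegree counting then gives $|C_j(x)|=\Omega(n)$.

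For $j\in P$, the $\cA_x'$-edges themselves supply $\Omega(n)$ vertices of $V_j'$ joined to $x$ by $\Omega(n^{r-2})$ edges of $\cF_2$. They likewise supply a vertex $v\in V_1'$ with this property. The greedy construction (via Claim~\ref{claim: finding common neighbour}, with $\epsilon^{1/3}\ll\beta$) then produces a $K_\ell^{(r)+}$ inside $\cF_2$, hence a rainbow one.

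It is the factor $k$ that beats the $\cF_1$-contribution. From $|\cA_x'|\ge\eta n^{r-1}$ alone you only get mass $\binom{\ell}{2}\eta n^{r-1}$, which \eqref{eq: move vertex} can balance with $\cF_1$-edges. So your opening reduction to a bound on the count $|\cA_x'|$ discards exactly the information the argument needs. The sunflower theorem is not needed here.
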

\begin{proof}
    Suppose, for a contradiction, that $\sum_{E\in \cA_x'}m(E)>\frac{k\cdot c(r,\ell)}{4(\ell-1)} n^{r-1}$.
Notice that each hyperedge in $\cA_x'$ intersects with exactly $r-2$ different parts in $V_2,\dots,V_{\ell-1}$.
Thus, without loss of generality, we assume that the sum of multiplicities of hyperedges in $\cA_x'$ intersecting with $V_2,\dots,V_{r-1}$ is at least $$\frac{1}{\binom{\ell-2}{r-2}} \sum_{E\in \cA_x'}m(E)\geq \frac{1}{\binom{\ell-2}{r-2}} \frac{k\cdot c(r,\ell)}{4(\ell-1)} n^{r-1}=\beta(r,\ell) k n^{r-1}.$$
And then, the number of hyperedges in $\cA_x'$ intersecting with $V_2,\dots,V_{r-1}$ (denoted by $\cA_{x,[r-1]}'$) is at least $\beta(r,\ell) n^{r-1}$.

Where $\beta(r,\ell)=\frac{1}{\binom{\ell-2}{r-2}}\frac{c(r,\ell)}{4(\ell-1)}$ is a constant.
This implies for every $ r-1\leq i\leq \ell-1$, $$\sum_{E\in \cA_x,E\cap V_i=\emptyset}m(E)\geq \beta(r,\ell) k n^{r-1}.$$

Then, according to \eqref{eq: move vertex}, for every $r-1\le i\le \ell-1$,
$$\sum_{x\in E\in \cK\cap \cF,E\cap V_i\neq \emptyset}m(E)\geq \beta(r,\ell) k n^{r-1}-k\cdot|\cB_x|\geq \beta(r,\ell) k n^{r-1}-k\cdot r\epsilon^{\frac{1}{2}} n^{r-1}.$$
Then the number of hyperedges in $\cK\cap \cF$ containing $x$ and intersecting with $V_i$ (collect the set of such hyperedges $\cK_i(x)$) is at least
$$\frac{1}{k}(\beta(r,\ell) k n^{r-1}-k\cdot \epsilon^{\frac{1}{2}} n^{r-1})\geq \frac{1}{2}\beta(r,\ell) n^{r-1}.$$

By the size of $\cA_x'$, there exists a vertex $v\in V_1$ such that the number of hyperedges in $\cA_x'$ containing both $x$ and $v$ is at least $\frac{1}{2}\beta(r,\ell) n^{r-2}$.
Recall that $C_i(x)$ is the vertex $v_i$ in $V_i$, with $d_{\cF_2\cap \cM}(\{v_i,x\})\geq C(r,\ell)n^{r-3}$. And according to the size of $\cK_i(x)$, for $r-1\le i\le \ell-1$,
$$\frac{1}{2}\beta(r,\ell) n^{r-1}\leq \cK_i(x)|\leq |V_i'\setminus C_i(x)|\cdot C(r,\ell)n^{r-3}+|C_i(x)|n^{r-2}.$$
It implies $|C_i(x)|\geq \frac{1}{3}\beta(r,\ell) n$ for $r-1\le i\le \ell-1$.
And since the number of hyperedges in $\cA_{x,[r-1]}'$ is at least $\beta(r,\ell) n^{r-1}$, then for each $V_i$ where $2\le i\le  
r-2$, we have
$$\beta(r,\ell) n^{r-1}\leq |\cA_{x,[r-1]}'| \leq |V_1'\setminus C_i(x)|\cdot C(r,\ell) n^{r-3}+|C_i(x)|n^{r-2}.$$
This implies $|C_i(x)|\geq \frac{1}{3}\beta(r,\ell) n$ for all $2\le i\le r-2$. And the same bound of $|C_i(x)|$ holds for all $2\le i\le \ell-1$.
 
According to Claim \ref{claim: finding common neighbour}, and with similar process as Claim \ref{claim: intersect with one vertex}, there is a copy of $K_{\ell-1}^{(r)+}$ containing $v$ with each part containing at most one core vertex (denoted by $u_i$), and $u_i\in C_i(x)$ for $i\neq 1$.
By the definition of $C_i(x)$, there are $\ell-2$ hyperedges $E_2,\dots, E_{\ell-1}\in \cF_2\cap\cM$, such that $E_i\cap E_j=\{x,u_i\}$ for $2\leq i<j\leq \ell-1$, and $E_i\cap V(K_{\ell-1}^{(r)+})=\{u_i\}$.
Since $d_{\cF_2}(\{v,x\})\geq \frac{1}{2}\beta(r,\ell) n^{r-2}$, there is a hyperedge $E_1\in \cF_2$ containing $\{v,x\}$ and avoiding other vertices in $V(K_{\ell-1}^{(r)+})\cup(\cup_{i=2}^{\ell-1}E_i)$.
Then, there is a copy of $K_{\ell}^{(r)+}$ with hyperedges $E(K_{\ell-1}^{(r)+})(\cup_{i=1}^{\ell-1} E_i)$ in $\cF_2$.
Then, there exists a copy of rainbow $K_{\ell}^{(r)+}$, a contradiction. 
\end{proof}
The above claim implies that
$$\sum_{E\in \cA_x'}m(E)\leq \frac{k\cdot c(r,\ell)}{4(\ell-1)}\cdot n^{r-1}\leq \frac{1}{2}\sum_{E\in \cM,x\in E}m'(E).$$
And we have $\sum_{E\in \cA_x\setminus \cA_x'}m(E)\leq (\binom{\ell}{2}-1)n^{r-1}<\frac{1}{6}\sum_{x\in E\in \cK}m'(E)$
when $k\geq \frac{ 12(\ell-1)(\binom{\ell}{2}-1)}{c(r,\ell)}$.
Now, we have proved that the sum of multiplicities of hyperedges in $\cA_x\cup \cB_x$ is at most $\frac{2}{3}\sum_{E\in \cM,x\in E}m'(E)$.

Moreover, since the sum of multiplicities of hyperedges in $\cB_x$ is at most
$$\epsilon k\cdot n^{r-2}<\frac{1}{6}|\cM_x|,$$
now we proved that $|\cA_x|+|\cB_x|<\frac{5}{6} \sum_{ E \in \mathcal{M},x\in E} m'(E)$.

And for $x\in X$, if there are no hyperedges $E$ containing both $x$ and some vertex $v_1'\in V_1$, then we have $\cA_x=\emptyset$. 
And by (\ref{eq: Bx}), the sum of multiplicity in $\cB_x$ is at most
$$k\frac{r}{2} \epsilon^{1/2} \left(\frac{1}{\ell-1}-\epsilon^{1/3}\right)^{r-2}n^{r-2},$$
while by the definition of $X$, the sum of missing multiplicity of $\cM(x)$ is at least
$$\left(k-\binom{\ell}{2}+1\right)\epsilon^{1/2}n^{r-2}>k\frac{r}{2} \epsilon^{1/2} \left(\frac{1}{\ell-1}-\epsilon^{1/3}\right)^{r-2}n^{r-2},$$
when $k>3\binom{\ell}{2}$.
Summing over all $x\in X$, we have
$$\sum_{E \in \mathcal{E}_1 \cup \mathcal{E}_2} m(E) < \sum_{E \in \mathcal{M}} m'(E),$$
and (\ref{eq: missing degree goal}) holds, and we are done.
\hfill
$\square$

\section{Proof of Theorem \ref{thm: small-s}}

In this section, we give the proof of Theorem \ref{thm: small-s}. First, we prove for the lower bound.

\subsection{The lower bound}

When $\ell>r$ and $\binom{\ell}{2}\le s\leq \frac{\ell^2-1}{2}$, let $S_1$ be a set of $\binom{\ell}{2}$ vertices. Then it is easy to check the hypergraph with hyperedges containing exactly one vertex in $S_1$ achieves the lower bound, and is $\{K_{\ell+1}^{(r)+},M_{s+1}\}$-free.
This gives the lower bound of (i) in Theorem \ref{thm: small-s}.

When $\ell>r$ and $\binom{\ell-1}{2}+r\leq s< \binom{\ell}{2}$, let $S_2$ be a set of $s$ vertices. Then it is easy to check the hypergraph with hyperedges containing exactly one vertex in $S_2$ achieves the lower bound, and is $\{K_{\ell+1}^{(r)+},M_{s+1}\}$-free.
This gives the lower bound of (ii) in Theorem \ref{thm: small-s}.
Then, we will prove the lower bound of (iii), (iv) and (v) in Theorem \ref{thm: small-s}.
\begin{construction}\label{cons1}
Suppose $2+\binom{\ell-1}{2}\leq s< r+\binom{\ell-1}{2}$. Let $\cG_1$ be the $r$-graph defined as follows: take a vertex set $A$ of size $s$, a vertex set $B$ of size $n-s$ and a vertex $u\in A$, and add all hyperedges that contain exactly one vertex from $A$ as well as add all hyperedges containing $u$ and at least $s-\binom{\ell-1}{2}$ other vertices from $A$.
\end{construction}

\begin{construction}\label{cons2}
Suppose $2\leq t\leq \ell-2$ and $\ell+1-t+\binom{t}{2}\leq s< \ell-t+\binom{t+1}{2}$. Let $\cG_2(t)$ be the $r$-graph defined as follows: take a vertex set $A$ of size $s$ and a vertex set $B$ of size $n-s$. We partition $A$ into $\ell-t$ parts, where each part has size $\lfloor\frac{s}{\ell-t}\rfloor$ or $\lceil\frac{s}{\ell-t}\rceil$, we denote them as $A_1,\dots,A_{\ell-t}$. We add all hyperedges containing exactly one vertex from $A$, and the additionally, all the hyperedges
formed by $u_i\in A_i$, $u_j\in A_j$ where $i \neq j$, and one $(r-2)$-subset of $B$.
\end{construction}

\begin{construction}\label{cons3}
Suppose $s< \ell$. Let $\cG_3$ be the $r$-graph defined as follows: take a vertex set $A$ of size $s$ and a vertex set $B$ of size $n-s$, and add all hyperedges that contain at least one vertices of $A$.
\end{construction}

Each hyperedge in $\cG_1,\cG_2(t)$ and $\cG_3$  contains at least one vertex in $A$, and $A$ is a set with $s$ vertices, $\cG_1$, $\cG_2(t)$ and $\cG_3$ are $M_{s+1}$-free. Next, we show that $\cG_1$, $\cG_2(t)$ and $\cG_3$ are $K_{\ell+1}^{(r)+}$-free.

\begin{proposition}
$\cG_1$, $\cG_2(t)$ and $\cG_3$ are $K_{\ell+1}^{(r)+}$-free.
\end{proposition}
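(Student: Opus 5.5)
The plan is a single counting argument on the vertices of $A$ that any copy of $K_{\ell+1}^{(r)+}$ would need. Suppose $\cK$ is a copy of $K_{\ell+1}^{(r)+}$ in one of $\cG_1,\cG_2(t),\cG_3$. Two facts drive the argument.

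\begin{itemize}
\item Every hyperedge of $\cK$ contains exactly two core vertices, and its non-core vertices belong to no other hyperedge of $\cK$.
\item Every hyperedge of each construction meets $A$.
\end{itemize}

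Let $a$ be the number of core vertices in $A$ and $b=\ell+1-a$ the number in $B$. Each of the $\binom{b}{2}$ hyperedges joining two $B$-core vertices must contain a non-core vertex of $A$, and these vertices are distinct. Hence
$$s=|A|\ \ge\ a+\binom{b}{2}+(\text{further non-core }A\text{-vertices forced by the structure}).$$
In each case we show that this is impossible. We use that, as a function of $a$ with $a+b=\ell+1$, the quantity $a+\binom{b}{2}$ is non-increasing as long as $b\ge 2$: increasing $a$ by one changes it by $1-(b-1)=-(b-2)$.

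For $\cG_3$ there is no structural restriction. The minimum of $a+\binom{\ell+1-a}{2}$ over $0\le a\le \ell+1$ is $\ell$, attained at $b\in\{1,2\}$, and the value at $b=0$ is $\ell+1$. So $s\ge \ell$ is required, which contradicts $s<\ell$.

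For $\cG_2(t)$, every hyperedge with two $A$-vertices has them in different parts $A_i,A_j$. So two $A$-core vertices cannot lie in the same part, which gives $a\le \ell-t$ and hence $b\ge t+1\ge 2$. By monotonicity, $a+\binom{b}{2}\ge \ell-t+\binom{t+1}{2}>s$, a contradiction.

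For $\cG_1$, set $q=s-\binom{\ell-1}{2}\ge 2$. Every hyperedge with at least two $A$-vertices contains $u$ together with at least $q$ further vertices of $A$. Since at most one hyperedge of $\cK$ can contain $u$ as a non-core vertex, and if $u$ is core then no edge between two other $A$-cores can exist, we get $a\le 2$. This case analysis is the main obstacle, because the naive count $a+\binom{b}{2}$ with $a=2$, $b=\ell-1$ equals $\binom{\ell-1}{2}+2\le s$ and yields no contradiction. The extra $A$-vertices forced by the edge between the two $A$-cores must be counted.

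\begin{itemize}
\item If $a=2$ and $u$ is not core, the edge joining the two $A$-cores contains $u$ and at least $q-2$ further non-core $A$-vertices. In total $A$ needs at least $2+1+(q-2)+\binom{\ell-1}{2}=s+1$ vertices.
\item If $a=2$ and $u$ is core, the edge from $u$ to the other $A$-core carries at least $q-1$ non-core $A$-vertices. The total is again at least $2+(q-1)+\binom{\ell-1}{2}=s+1$.
\item If $a\le 1$, then $b\ge \ell$. The count is at least $\binom{\ell}{2}=\binom{\ell-1}{2}+\ell-1\ge\binom{\ell-1}{2}+r>s$, using $\ell\ge 2r+1$ and $s<\binom{\ell-1}{2}+r$.
\end{itemize}

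In all three constructions, therefore, $|A|=s$ is too small, and the proposition follows.
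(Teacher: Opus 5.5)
Your proof is correct. For $\cG_2(t)$ and $\cG_3$ it is the paper's argument: you count the core vertices in $A$ plus one private non-core $A$-vertex for each pair of $B$-cores, and for $\cG_2(t)$ you use the part structure to get $a\le \ell-t$. You also make the monotonicity of $a+\binom{\ell+1-a}{2}$ explicit, which the paper leaves implicit.

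For $\cG_1$ you use the same ingredients, but you differ from the paper at exactly the point that matters. The paper rules out $|K\cap A|\le 2$ by asserting $\binom{\ell+1-a}{2}+a>\binom{\ell-1}{2}+r$ for $a\le 2$. At $a=2$ this is false: the left side is $\binom{\ell-1}{2}+2\le s$. The paper then uses the special vertex $u$ only to exclude $a\ge 3$. You use the same $u$-argument to get $a\le 2$. You then close the case $a=2$ by counting the at least $q-1$ extra non-core $A$-vertices forced into the hyperedge joining the two $A$-cores, whether or not $u$ is a core vertex. This gives $|A|\ge s+1$. So your version repairs a genuine gap in the paper's proof rather than just rephrasing it, and you correctly flagged this as the main obstacle.

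One small remark on the case $a\le 1$: you do not need $\ell\ge 2r+1$ there. Keeping the $+a$ term gives $|A|\ge 1+\binom{\ell}{2}=\binom{\ell-1}{2}+\ell>s$, using only $\ell\ge r$; the case $a=0$ gives even more.
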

\begin{proof}
    First, we prove $\cG_1$ is $K_{\ell+1}^{(r)+}$-free.
Suppose $2+\binom{\ell-1}{2}\leq s< r+\binom{\ell-1}{2}$. Assume that $\cG_1$ contains a copy of $K_{\ell+1}^{(r)+}$. Let us consider the core of $K_{\ell+1}^{(r)+}$, which is denoted by $K$, and let $K=\{k_1,k_2,\dots,k_{\ell+1}\}$.
Notice that each hyperedge in $\cG_1$ contains at least one vertex in $A$, we claim that $|K\cap A|\geq 3$. Otherwise, suppose $|K\cap A|=i$, and assume that $K\cap A=\{k_1,\dots,k_i\}$, when $i>0$.
Since each hyperedges in the copy of $K_{\ell+1}^{(r)+}$ containing $\{k_p,k_q\}$, where $i\le p<q\le \ell+1$, contains at least one vertex in $A$, and these vertices are distinct, which implies $|V(K_{\ell+1}^{(r)+})\cap A|\geq \binom{\ell+1-|K\cap A|}{2}+|K\cap A|$. When $|K\cap A|\leq 2$,
$\binom{\ell+1-|K\cap A|}{2}+|K\cap A|> \binom{\ell-1}{2}+r$, a contradiction.

Thus, $|K\cap A|\geq 3$. If $u\in K\cap A$, then we may assume $\{k_1,k_2,k_3\}\subseteq K\cap A$ and $u=k_1$.
And let $E_{12},E_{13},E_{23}$ be the hyperedges in the copy of $K_{\ell+1}^{(r)+}$ containing $\{k_1,k_2\},\{k_1,k_3\},\{k_2,k_3\}$, respectively.
Then, $E_{12},E_{13},E_{23}$ contains at least two vertices in $A$.
But by the construction of $\cG_1$, each hyperedge in $\cG_1$ containing at least two vertices in $A$ must contain the vertex $u=k_1$. However, $E_{23}$ does not contain $k_1$. A contradiction.
If $u\notin K\cap A$, then all of $E_{12},E_{13},E_{23}$ contain $u$, which is a contradiction.
Thus, $\cG_1$ is $K_{\ell+1}^{(r)+}$-free.

Then, we will prove that $\cG_2(t)$ is $K_{\ell+1}^{(r)+}$-free.
In this case, suppose $2\leq t\leq \ell-2$ and $\ell+1-t+\binom{t}{2}\leq s< \ell-t+\binom{t+1}{2}$. Assume that $\cG_2(t)$ contains a copy of $K_{\ell+1}^{(r)+}$. The core of $K_{\ell+1}^{(r)+}$ is denoted by $K$. Since $s< \ell-t+\binom{t+1}{2}$, we have $|K\cap A|\geq \ell+1-t$,
otherwise, similar to the above proof, the number of vertices in $A$ and in the copy of $K_{\ell+1}^{(r)+}$ is at least $\binom{\ell+1-|K\cap A|}{2}+|K\cap A|\ge \binom{t+1}{2}+\ell-t$, a contradiction by the size of $A$.
By the construction, each vertex in $K\cap A$ is contained in a different part of $A$, because there is no hyperedge containing two vertices in the same part.
But since $A$ has only $\ell-t$ parts, and $|K\cap A|\geq \ell+1-t$, a contradiction.
Thus, $\cG_2(t)$ is $K_{\ell+1}^{(r)+}$-free.

Finally, we prove that $\cG_3$ is $K_{\ell+1}^{(r)+}$-free.
Suppose $s< \ell$. Assume that $\cG_3$ contains a copy of $K_{\ell+1}^{(r)+}$. We consider the vertex set $K$ of the core of $K_{\ell+1}^{(r)+}$. Set $|K\cap A|=x$. Clearly, $0\leq x\leq s$. And also with a similar argument, we have $|K\cap A|\geq \ell+1-x$. Then $|V(K_{\ell+1}^{(r)+})\cap A|\geq x+\binom{\ell+1-x}{2}\geq \ell>|A|$, which is a contradiction. Thus, $\cG_3$ is $K_{\ell+1}^{(r)+}$-free.
\end{proof}
The lower bound of (iii), (iv) and (v) in Theorem \ref{thm: small-s} is given by Constructions \ref{cons1}, \ref{cons2} and \ref{cons3}, respectively.

\subsection{Preliminary for the upper bound}


Let $\mathcal{H}$ be an $n$-vertex $r$-graph with the maximum number of hyperedges that is $\{K_\ell^{(r)+}, M_{s+1}^{(r)+}\}$-free. 
Let $V_0 = \{ v \in V(\mathcal{H}) : d_{\mathcal{H}}(v) \geq r(s+1)n^{r-2} \}$ be the set of high-degree vertices. 
In \cite{yang2025hypergraph}, they give an upper bound on the size of $V_0$ by the matching number of $\mathcal{H}$. And they showed that the number of hyperedges contained in $V(\cH)\setminus V_0$ is $O(n^{r-2})$.

\begin{lemma}\label{lem: size of V0}
    We have $|V_0|\leq s$, and the number of hyperedges contained in $V(\cH)\setminus V_0$ is $O(n^{r-2})$.
\end{lemma}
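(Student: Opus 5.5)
We only use that $\mathcal{H}$ is $M_{s+1}^{(r)+}$-free; the $K^{(r)+}$-freeness plays no role. Fix a maximum matching $\mathcal{M}=\{E_1,\dots,E_p\}$ of $\mathcal{H}$, so $p\le s$, and let $U=\bigcup_{i}E_i$, so $|U|=rp\le rs$. By maximality, every hyperedge of $\mathcal{H}$ meets $U$.

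\textbf{Bound on $|V_0|$.} Suppose $|V_0|\ge s+1$ and pick distinct $v_1,\dots,v_{s+1}\in V_0$. Build a matching greedily: having chosen pairwise disjoint hyperedges $F_1\ni v_1,\dots,F_{j-1}\ni v_{j-1}$, avoiding all $v_i$ with $i\neq$ their own index, we want $F_j\ni v_j$ disjoint from
$$W_j=\bigcup_{i<j}F_i\cup\{v_{j+1},\dots,v_{s+1}\}.$$
We have $|W_j|\le r(j-1)+(s+1-j)\le r(s+1)-1$. Each $w\in W_j$ lies in at most $\binom{n-2}{r-2}\le n^{r-2}/(r-2)!$ hyperedges through $v_j$. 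Hence the number of hyperedges containing $v_j$ and meeting $W_j$ is at most
$$(r(s+1)-1)\,n^{r-2} < r(s+1)n^{r-2}\le d_{\mathcal{H}}(v_j).$$
So a valid $F_j$ exists. After $s+1$ steps we obtain $M_{s+1}^{(r)+}$ in $\mathcal{H}$, a contradiction. Therefore $|V_0|\le s$.

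\textbf{Hyperedges avoiding $V_0$.} Every hyperedge inside $V(\mathcal{H})\setminus V_0$ meets $U$, so it contains some $u\in U\setminus V_0$. Each such $u$ has $d_{\mathcal{H}}(u)<r(s+1)n^{r-2}$. Summing over the at most $rs$ vertices of $U\setminus V_0$, the number of hyperedges contained in $V(\mathcal{H})\setminus V_0$ is at most
$$rs\cdot r(s+1)n^{r-2}=O(n^{r-2}),$$
since $r$ and $s$ are fixed.

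The only delicate step is the greedy count, where we must check that the forbidden set stays below $r(s+1)$ vertices so the degree threshold suffices. The count above confirms this; everything else is immediate.
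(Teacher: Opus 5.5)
Your proof is correct. The greedy step works because $|W_j|\le rs$ and each vertex of $W_j$ lies in at most $\binom{n-2}{r-2}\le n^{r-2}$ edges through $v_j$. The second part works because every edge avoiding $V_0$ must meet one of the at most $rs$ low-degree vertices covered by a maximal matching. The paper gives no proof of its own and only cites Yang--Zeng--Zhang for this lemma; your argument is the standard one behind that citation, written out in full.
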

By the lower bound construction, when $s<\frac{\ell^2-1}{2}$, we may assume that the number of hyperedges in $\mathcal{H}$ satisfies
\begin{equation}\label{eq: lower bound H}
	e(\mathcal{H}) \geq \min\left\{s, \binom{\ell}{2}\right\} \binom{n}{r-1} + O(n^{r-2}).
\end{equation}
For a set of vertex $V'$,
let $\mathcal{M}(V')$ denote the set of missing edges that intersect $V'$ in exactly one vertex, defined as $\mathcal{M}(V') = \{ E \in \binom{V(\cH)}{r} : |E \cap V'| = 1 \text{ and } E \notin E(\mathcal{H}) \}$.
We have the following results.

\begin{lemma} \label{lem: find clique with bad edge}
    Let $V'$ be a set of vertices with size at most $s$.
    If the number of hyperedges in $\cH$ is at least $|V'|\binom{n}{r-1}+O(n^{r-2})$, the number of hyperedges contained in $V(\cH)\setminus V'$ is $O(n^{r-2})$, and $|\mathcal{M}(V')| = O(n^{r-2})$. Let $u_1, \dots, u_t$ be $t$ distinct vertices in $V'$, $D \subseteq V' \setminus \{u_1, \dots, u_t\}$ be a $d$-set, and $\ell$ be a positive integer. If $|V'| \geq t + d + \binom{\ell-1}{2}$, then there exists a copy of $K_{\ell-1}^{(r)+}$, denoted by $\mathcal{K}$, such that for each $i \in [t]$, there is a copy of $K_{\ell}^{(r)+}$, denoted by $\mathcal{K}_{u_i}$, satisfying the following:
	\begin{enumerate}
		\item[(i)] $\mathcal{K} \subset \mathcal{K}_{u_i}$ and $u_i \in V(\mathcal{K}_{u_i}) \setminus V(\mathcal{K})$ for each $i \in [t]$;
		\item[(ii)] $V(\mathcal{K}_{u_i}) \cap D = \emptyset$ for all $i \in [t]$;
		\item[(iii)] $V(\mathcal{K}_{u_i}) \cap V(\mathcal{K}_{u_j}) = V(\mathcal{K})$ for all $1 \le i < j \le t$.
	\end{enumerate}
\end{lemma}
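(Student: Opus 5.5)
The plan is to place the core $\mathcal{K}$ inside $V'$ and use the near-completeness of the link structure $\{E: |E\cap V'|=1\}$ to supply all the expansion hyperedges, with their non-core vertices taken from $B:=V(\cH)\setminus V'$. The hypotheses say that all but $O(n^{r-2})$ of the $r$-sets meeting $V'$ in exactly one vertex are hyperedges. So for every $v\in V'$, all but $O(n^{r-2})$ of the $(r-1)$-subsets of $B$ together with $v$ form hyperedges of $\cH$. Call an $(r-1)$-set $P\subseteq B$ \emph{good for $v$} if $\{v\}\cup P\in E(\cH)$. For a fixed finite set $W\subseteq V'$ (of constant size), all but $O(n^{r-2})$ of the $(r-1)$-subsets of $B$ are good for every vertex of $W$ simultaneously.

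The construction goes as follows. Choose a set $W_0\subseteq V'\setminus(\{u_1,\dots,u_t\}\cup D)$ of size $\binom{\ell-1}{2}$; this is possible since $|V'|\ge t+d+\binom{\ell-1}{2}$. Pick $\ell-1$ further core vertices $c_1,\dots,c_{\ell-1}$ in $B$. Each edge $c_pc_q$ of the clique $K_{\ell-1}$ is realized by a hyperedge $\{w_{pq},c_p,c_q\}\cup Q_{pq}$, where the $w_{pq}\in W_0$ are distinct and the $Q_{pq}\subseteq B$ are fresh disjoint $(r-3)$-sets. This uses exactly the $\binom{\ell-1}{2}$ vertices of $W_0$, one per hyperedge, so each such hyperedge meets $V'$ in exactly one vertex. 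Thus $\mathcal{K}$ is a $K_{\ell-1}^{(r)+}$ with core in $B$. Next, for each $i\in[t]$, attach $u_i$ as the new core vertex: for every $p\in[\ell-1]$ choose a hyperedge $\{u_i,c_p\}\cup R_{ip}$ with $R_{ip}\subseteq B$ an $(r-2)$-set, all $R_{ip}$ pairwise disjoint and disjoint from everything used before. These hyperedges meet $V'$ only in $u_i$, so $\mathcal{K}_{u_i}$ is a $K_\ell^{(r)+}$ containing $\mathcal{K}$. Conditions (i) and (iii) hold by disjointness of the fresh sets and of the distinct $u_i$'s. Condition (ii) holds because every vertex used lies in $B\cup W_0\cup\{u_i\}$, which avoids $D$.

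The existence of the needed hyperedges comes from greedy choice. We need roughly $\binom{\ell-1}{2}+t(\ell-1)=O(1)$ hyperedges, all on $O(1)$ vertices. So it suffices to choose the core vertices $c_p\in B$ such that each pair $\{v,c_p\}$, for $v\in W_0\cup\{u_1,\dots,u_t\}$, lies in $\Omega(n^{r-2})$ hyperedges with the remaining vertices in $B$. For the triples $\{w_{pq},c_p,c_q\}$, we similarly need $\Omega(n^{r-3})$ completions. Then each required hyperedge can be picked avoiding the $O(1)$ previously used vertices, since only $O(n^{r-3})$ (respectively $O(n^{r-4})$) completions meet a fixed vertex.

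\textbf{Main obstacle.} The main obstacle is guaranteeing these codegree conditions, i.e.\ choosing $c_1,\dots,c_{\ell-1}$ so that no relevant pair or triple is hit heavily by the $O(n^{r-2})$ missing edges of $\mathcal{M}(V')$. I would handle this by averaging. A vertex $c\in B$ is \emph{bad for $v$} if more than $\tfrac12\binom{|B|}{r-2}$ sets of the form $\{v,c\}\cup R$ are missing. Since each $v$ has only $O(n^{r-2})$ missing sets, at most $O(1)$ vertices are bad for each of the $O(1)$ relevant $v$. For triples, the number of pairs $\{c_p,c_q\}$ with more than half of their $\{w,c_p,c_q\}\cup Q$ completions missing is $O(n)$ for each $w$. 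So by a greedy or random choice of $c_1,\dots,c_{\ell-1}$ from $B$ (which has size $n-O(1)$), avoiding $O(1)$ bad vertices and then $O(n)$ bad pairs, all conditions hold for $n$ large. The case $r=3$ needs no $Q_{pq}$ and is the same argument. The hypothesis on hyperedges inside $V(\cH)\setminus V'$ is not even needed beyond consistency.
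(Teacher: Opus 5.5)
Your argument is correct and is essentially the paper's construction. The core vertices lie in $V(\mathcal H)\setminus V'$ (not inside $V'$, as your opening sentence misstates), one distinct vertex of $V'\setminus(D\cup\{u_1,\dots,u_t\})$ is inserted into each hyperedge of $\mathcal K$, and each $u_i$ is attached greedily through core vertices that are good for it. The only difference is that the paper finds the underlying $K_{\ell-1}^{(r-1)+}$ in the nearly complete common link of $V'$ by citing Theorem~\ref{Pikhurko}, whereas you pick the core directly by avoiding $O(1)$ bad vertices and $O(n)$ bad pairs (a random choice does this cleanly) and then fill in greedily, which is more elementary and uses only $|\mathcal M(V')|=O(n^{r-2})$.
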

\begin{proof}
	Let $\mathcal{F}$ denote the collection of $(r-1)$-sets defined as follows: 
	\[ \mathcal{F} = \{ S \subseteq V(\cH) \setminus V' : |S|=r-1 \text{ and } \{u\} \cup S \in E(\mathcal{H}) \text{ for all } u \in V' \}. \] 
	Let $\mathcal{E} = \{ E \in E(\mathcal{H}) : |E \cap V'| \geq 2 \}$. 
	Since $|V'| \le s$, it is clear that $|\mathcal{E}| = O(n^{r-2})$. 
	Note that each edge in $E(\mathcal{H}) \setminus \mathcal{E}$ contains at most one vertex in $V'$. Combining the lower bound on $e(\mathcal{H})$, we have
	\[ |V'| \binom{n-s}{r-1} - O(n^{r-2}) \leq e(\mathcal{H}) \leq s \cdot |\mathcal{F}| + |\mathcal{E}| + O(n^{r-2}), \]
	which yields $|\mathcal{F}| = \binom{n-s}{r-1} - O(n^{r-2})$.

	A vertex $w \in V(\mathcal{H}) \setminus V'$ is called \textit{good} if for every $i \in [t]$, the number of hyperedges in $E(\mathcal{H}) \setminus \mathcal{E}$ containing both $w$ and $u_i$ is at least $\frac{1}{2} \binom{n-s-1}{r-2}$; otherwise, $w$ is \textit{bad}. Let $B$ and $C$ denote the sets of bad and good vertices in $V(\mathcal{H}) \setminus V'$, respectively. For each bad vertex $w$, there are at least $\frac{1}{3} \binom{n}{r-2}$ missing edges incident to $w$. Since the total number of missing edges $|\mathcal{M}| = O(n^{r-2})$, we have $|B| = O(1)$.
	
	Define $\mathcal{F}' = \{ E \in \mathcal{F} : E \subseteq C\}$. Since $|B| = O(1)$, it follows that $|\mathcal{F}'| = \binom{n-s}{r-1} - O(n^{r-2})$. By the Theorem \ref{Pikhurko}, there exists a copy of $K_{\ell-1}^{(r-1)+}$, denoted by $\mathcal{K}_1$, in $\mathcal{F}'$. By the definition of $\mathcal{F}$ and $s \ge t + d + \binom{\ell-1}{2}$, we can greedily select $\binom{\ell-1}{2}$ distinct vertices from $V' \setminus (D \cup \{u_1, \dots, u_t\})$ to extend $\mathcal{K}_1$ into a copy of $K_{\ell-1}^{(r)+}$, denoted by $\mathcal{K}$. By construction, the core vertices of $\mathcal{K}$, denoted by $\{w_1, \dots, w_{\ell-1}\}$, are all good.
	
	Since each $w_j \in V(\mathcal{K}) \setminus V'$ is a good vertex, it is incident to at least $\frac{1}{2} \binom{n-s-1}{r-2}$ hyperedges containing $u_i$ for each $i \in [t]$. For each $u_i$, we can greedily select $(r-2)$-sets containing $u_i$ and $w_j$, while avoiding the vertices in $V(\cK_1)\cup (V'\setminus \{u_i\})\bigcup_{j<i}V(\cK_j)$.
    This procedure forms the clique $\mathcal{K}_{u_i}$.
     For sufficiently large $n$, the degree condition of good vertices ensures that these newly chosen vertices can be picked to avoid all vertices in $V(\cK_1)\cup (V'\setminus \{u_i\})$ and all vertices previously selected for other $\mathcal{K}_{u_j}$ where $j \neq i$. 
	
	Consequently, for each $i \in [t]$, we obtain a copy of $K_{\ell}^{(r)+}$ such that $V(\mathcal{K}_{u_i}) \cap V(\mathcal{K}_{u_j}) = V(\mathcal{K})$ for all $1 \le i < j \le t$. 
\end{proof}

\subsection{Proof of Theorem \ref{thm: small-s} (i) and (ii).}
We first consider an easier case, where $\ell > r$ and $\binom{\ell-1}{2} + r \le s < \binom{\ell}{2}$.

\noindent
\textbf{Proof of Theorem \ref{thm: small-s} (ii).}
Since every vertex in $V_0$ is contained at most $\binom{n-1}{r-1}$ hyperedges, combined with the lower bound, we have $|V_0|=s$. It also implies that there are no hyperedges contained in $V(\cH)\setminus V_0$. Otherwise, since each vertex in $V_0$ is contained in at least $\binom{n}{r-1}-O(n^{r-2})$ hyperedges, we can greedily find a copy of $M_{s+1}^{(r)+}$.
 By comparing $e(\mathcal{H})$ with the lower bound construction, it follows that $|\mathcal{M}(V_0)| = O(n^{r-2})$.

\begin{claim}\label{claim: one vertex in V0}
	Every hyperedge in $\mathcal{H}$ contains exactly one vertex from $V_0$.
\end{claim}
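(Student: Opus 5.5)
Every hyperedge of $\mathcal{H}$ meets $V_0$, since no hyperedge lies inside $V(\mathcal{H})\setminus V_0$. So the claim amounts to ruling out a hyperedge $E$ with $|E\cap V_0|\ge 2$. I will argue by contradiction: assume such an $E$ exists and build a copy of $K_{\ell+1}^{(r)+}$ in $\mathcal{H}$ that uses $E$. (Here the forbidden clique in case (ii) is $K_{\ell+1}^{(r)+}$; the preliminary section's $K_\ell^{(r)+}$ is an index shift, and I will apply Lemma~\ref{lem: find clique with bad edge} with parameter $\ell+1$ in place of $\ell$ wherever needed.)

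\textbf{Step 1 (setup).} Let $u_1,u_2\in E\cap V_0$. Put $t=2$ and $D=(E\cap V_0)\setminus\{u_1,u_2\}$, so $d=|D|\le r-2$. Apply Lemma~\ref{lem: find clique with bad edge} with $V'=V_0$, which has $|V_0|=s$. Its hypotheses hold: $e(\mathcal{H})\ge s\binom{n}{r-1}-O(n^{r-2})$ by \eqref{eq: lower bound H}, there are no hyperedges inside $V(\mathcal{H})\setminus V_0$, and $|\mathcal{M}(V_0)|=O(n^{r-2})$. The size requirement becomes
\[
s\ge t+d+\binom{\ell-1}{2},
\]
and since $t+d\le r$, this is exactly the assumption $s\ge\binom{\ell-1}{2}+r$ of case (ii). The lemma gives a copy $\mathcal{K}$ of $K_{\ell-1}^{(r)+}$ and two extensions $\mathcal{K}_{u_1},\mathcal{K}_{u_2}\cong K_\ell^{(r)+}$ with the following properties: $u_i$ is a new core vertex of $\mathcal{K}_{u_i}$, both extensions avoid $D$, and they intersect only in $V(\mathcal{K})$.

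\textbf{Step 2 (gluing).} The union of $\mathcal{K}_{u_1}$, $\mathcal{K}_{u_2}$ and the hyperedge $E$ should be a $K_{\ell+1}^{(r)+}$ with core $\{u_1,u_2\}\cup\mathrm{core}(\mathcal{K})$:
\begin{itemize}
\item pairs inside $\mathrm{core}(\mathcal{K})$ are covered by the hyperedges of $\mathcal{K}$;
\item pairs $\{u_i,w_j\}$ are covered by the new hyperedges of $\mathcal{K}_{u_i}$;
\item the pair $\{u_1,u_2\}$ is covered by $E$.
\end{itemize}
For this to be an expansion, $E$ must meet the rest of the structure only in $\{u_1,u_2\}$. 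This is the main obstacle. The vertices of $E\cap V_0$ other than $u_1,u_2$ form $D$, which both extensions avoid. The vertices of $E\setminus V_0$ number at most $r-2$, a constant, so they must be avoided too. The proof of Lemma~\ref{lem: find clique with bad edge} allows this: there the good vertices, the set $\mathcal{F}'$ and the greedy $(r-2)$-sets are all chosen among $\Theta(n)$ candidates. I would therefore enlarge the forbidden set by the $O(1)$ vertices of $E\setminus V_0$ when choosing $\mathcal{K}_1$ in $\mathcal{F}'$ and when picking the padding vertices. Since $\mathcal{F}'$ still has $\binom{n-s}{r-1}-O(n^{r-2})$ edges after deleting $O(1)$ vertices, Theorem~\ref{Pikhurko} still finds $K_{\ell-1}^{(r-1)+}$. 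The padding vertices taken from $V_0$ already avoid $D\cup\{u_1,u_2\}$ by the lemma.

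\textbf{Conclusion.} With $E$ disjoint from the rest except at $u_1,u_2$, we obtain $K_{\ell+1}^{(r)+}\subseteq\mathcal{H}$, a contradiction. Hence every hyperedge meets $V_0$ in exactly one vertex.
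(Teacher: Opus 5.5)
Your proposal is correct and follows the paper's proof: pick $u_1,u_2\in E\cap V_0$, apply Lemma~\ref{lem: find clique with bad edge} to $V_0$ with $t=2$ to get $\mathcal{K}_{u_1},\mathcal{K}_{u_2}$ sharing $\mathcal{K}$ and avoiding the rest of $E$, then glue them with $E$ into a $K_{\ell+1}^{(r)+}$. Two small remarks. First, you actually use the lemma with the theorem's own $\ell$, as the size condition $s\ge t+d+\binom{\ell-1}{2}$ requires; the announced shift to $\ell+1$ is never needed and would break that condition. Second, you are more careful than the paper: it sets $D=E\setminus\{u_1,u_2\}$ even though the lemma formally requires $D\subseteq V'$, whereas you split off $E\setminus V_0$ and note that the lemma's proof can avoid these $O(1)$ extra vertices.
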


\begin{proof}
	As established earlier, no hyperedge is contained in $V(\mathcal{H}) \setminus V_0$. Thus, it remains to show that no hyperedge intersects $V_0$ in two or more vertices. 
	Suppose, for the sake of contradiction, that there exists an edge $E \in E(\mathcal{H})$ such that $|E \cap V_0| \ge 2$. Let $u_1, u_2 \in E \cap V_0$ and set $D = E \setminus \{u_1, u_2\}$. 
	Given $s \ge \binom{\ell-1}{2} + r$, we can apply Lemma \ref{lem: find clique with bad edge} for $V_0$ to find two expansion-cliques $\mathcal{K}_{u_1}$ and $\mathcal{K}_{u_2}$, which intersect at $\mathcal{K}$ and avoid $D$. By construction, $\mathcal{K}_{u_1} \cup \mathcal{K}_{u_2} \cup \{E\}$ forms a copy of $K_{\ell+1}^{(r)+}$. This contradiction completes the proof of the claim.
\end{proof}

As established previously, no hyperedge is contained in $V(\mathcal{H}) \setminus V_0$. Combined with Claim \ref{claim: one vertex in V0}, this ensures that every hyperedge in $\mathcal{H}$ contains exactly one vertex in $V_0$.  We have $e(\mathcal{H}) \leq  s \binom{n-s}{r-1}$, and we are done.
$\square$

Then we work on the left case.

\noindent \textbf{Proof of Theorem \ref{thm: small-s} (i).} 
For $\binom{\ell}{2} \le s < \frac{\ell^2-1}{2}$, we define $V_0$ as before. By Lemma \ref{lem: size of V0}, we have $|V_0| \le s$.
Let $\mathcal{H}'$ be the sub-hypergraph obtained by removing from $\mathcal{H}$ all edges that either are contained in $V \setminus V_0$ or intersect $V_0$ in at least two vertices. 
According to Lemma \ref{lem: size of V0} and the fact that $|V_0|$ is constant, the number of such removed hyperedges is $O(n^{r-2})$. 
Consequently, every hyperedge in $\mathcal{H}'$ intersects $V_0$ in exactly one vertex, and its total size satisfies 
$$ 
e(\mathcal{H}') \ge \binom{\ell}{2} \binom{n-s}{r-1} - O(n^{r-2}). 
$$
Following the same argument as in the proof of Theorem \ref{thm: k small rainbow}, we may assume that for every vertex $u \in V(\mathcal{H}) \setminus V_0$, its degree in $\mathcal{H}'$ satisfies 
$$
d_{\mathcal{H}'}(u) \ge \binom{\ell}{2} \binom{n-2}{r-2} - O(n^{r-3}). 
$$
For each vertex $u \in V(\mathcal{H}') \setminus V_0$, define its \textit{heavy} neighbor set in $V_0$ as:
\begin{equation}\label{eq: C(u)}
C(u) = \left\{ v \in V_0 : d_{\mathcal{H}'}(\{u, v\}) \geq (1-\epsilon) \binom{n}{r-2} \right\}, 
\end{equation}
where $\epsilon > 0$ is a constant depending on $\ell$ to be determined later. 
By the minimum degree condition on $\mathcal{H}'$, it follows that $|C(u)| \geq \binom{\ell}{2}$ for every $u \in V(\mathcal{H}') \setminus V_0$.

For each $v_i \in V_0$, let $\mathcal{L}_i = \{ S \subseteq V \setminus V_0 : S \cup \{v_i\} \in E(\mathcal{H}') \}$ be the link graph of $v_i$ in $\mathcal{H}'$. 
If 
\[ \sum_{i=1}^{|V_0|} |\mathcal{L}_i| > \binom{\ell}{2} \binom{n-|V_0|}{r-1}, \]
then by Theorem \ref{thm: k small rainbow} and Corollary \ref{cor: super rainbow}, there exists a super rainbow copy of $K_\ell^{(r-1)+}$ (denoted by $\cK_1$) among the collection of link graphs $\{\mathcal{L}_i\}_{i \in [|V_0|]}$.

Suppose the core vertices of this super rainbow $K_\ell^{(r-1)+}$ are $\{w_1, \dots, w_\ell\}$. 
Since each core vertex $w_i$ satisfies $|C(w_i)| \geq \binom{\ell}{2}$, it follows that at most $|V_0| - \binom{\ell}{2} \leq s - \binom{\ell}{2} < \frac{\ell^2-1}{2} - \binom{\ell}{2} = \frac{\ell-1}{2}$ vertices in $V_0$ are excluded from $C(w_i)$. 

By the Pigeonhole Principle, since $|V_0| \geq \binom{\ell}{2}$, there must exist a vertex $v^* \in \bigcap_{i=1}^\ell C(w_i)$. 
For each $(r-1)$-set $F$ in $\binom{V(\cH')\setminus V_0}{r-1}$, we say $F$ has color $v_i$ for $v_i\in V_0$ if $F$ is a hyperedge in the link graph $\cL_i$.
By the definition of a super rainbow copy, we can choose color for each hyperedge of $\cK_1$ such that $\cK_1$ is rainbow, and each hyperedge is not colored by $v^*$. 
Consequently, by taking $v^*$ as the $(\ell+1)$-th core vertex and using the fact that $v^* \in C(w_i)$ for all $i \in [\ell]$, we can expand the super rainbow $K_\ell^{(r-1)+}$ into a $K_{\ell+1}^{(r)+}$ in $\mathcal{H}'$, which is a contradiction.

Thus, we must have 
\[ \sum_{i=1}^{|V_0|} |\mathcal{L}_i| \leq \binom{\ell}{2} \binom{n-|V_0|}{r-1}. \]
Let $\mathcal{E}$ denote the set of hyperedges in $\mathcal{H}$ that are either contained in $V(\mathcal{H}) \setminus V_0$ or intersect $V_0$ in at least two vertices. 
By Lemma \ref{lem: size of V0} and the fact that $|V_0|$ is constant, we have $|\mathcal{E}| = O(n^{r-2})$. 
Since $E(\mathcal{H}) = E(\mathcal{H}') \cup \mathcal{E}$, it follows that
\[ e(\mathcal{H}) \leq \sum_{i=1}^{|V_0|} |\mathcal{L}_i| + |\mathcal{E}| \leq \binom{\ell}{2} \binom{n}{r-1} + O(n^{r-2}), \]
providing an asymptotically tight upper bound for $e(\mathcal{H})$.

On the other hand, $$\sum_{i\in [k]}|\cL_i|\geq e(\cH)-|\cE|\geq \binom{\ell}{2}\cdot \binom{n}{r-1}-O(n^{r-2}).$$
According to the stability Theorem \ref{thm : stability of rainbow}, we may assume we can delete $O(n^{r-2})$ hyperedges from $\cH'$ such that $\cL_i=\emptyset$ for $i\geq \binom{\ell}{2}+1$, and all the remaining $\binom{\ell}{2}\cdot\binom{n}{r-1}-O(n^{r-2})$ hyperedges contains exactly one vertex in $\{v_1,\dots,v_{\binom{\ell}{2}}\}$. 
Combined with the lower bound of $e(\cH)$,it implies for every $i\leq \binom{\ell}{2}$, $|\cL_i|\geq \binom{n}{r-1}-O(n^{r-2})$.
Let $V'=\{v_1,\dots,v_{\binom{\ell}{2}}\}$.
For every $r$-set $E\subseteq V(\cH)$ with $|E\cap V'|=1$ and $E\notin E(\cH)$, then we collect it in $\cM$ as missing hyperedges. Here $\cM=\cM(V')$ by previous definition.
Then we have $|\cM|=O(n^{r-2})$.

\begin{claim}\label{claim: no hyperedges contains at least two vertices in V'}
    There are no hyperedges contains at least two vertices in $V'$.
\end{claim}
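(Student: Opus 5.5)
We argue by contradiction, mirroring Claim \ref{claim: one vertex in V0}. Suppose some hyperedge $E\in E(\cH)$ satisfies $|E\cap V'|\ge 2$, and pick $u_1,u_2\in E\cap V'$. Set $D=E\setminus\{u_1,u_2\}$; its size $d\le r-2$ is a constant. The goal is to build two copies of $K_\ell^{(r)+}$ through $u_1$ and $u_2$ that share a common $K_{\ell-1}^{(r)+}$, avoid $D$, and are otherwise disjoint. Together with $E$, which supplies the missing pair $u_1u_2$, these form a $K_{\ell+1}^{(r)+}$ in $\cH$, a contradiction.

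The tool is Lemma \ref{lem: find clique with bad edge} applied with $V'$, $t=2$, $\{u_1,u_2\}$ and the set $D$. Its hypotheses are mostly in place already. We have $|V'|=\binom{\ell}{2}\le s$, and $e(\cH)\ge \binom{\ell}{2}\binom{n}{r-1}-O(n^{r-2})$ by \eqref{eq: lower bound H}. We also have $|\cM(V')|=O(n^{r-2})$, as just shown.

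The number of hyperedges inside $V(\cH)\setminus V'$ is also $O(n^{r-2})$, for the following reason. By Lemma \ref{lem: size of V0}, only $O(n^{r-2})$ hyperedges lie outside $V_0$. Each $v_i\in V_0\setminus V'$ has $|\cL_i|=O(n^{r-2})$ after the stability deletion, and edges meeting $V_0$ twice number $O(n^{r-2})$. Hence the edges avoiding $V'$ total $O(n^{r-2})$.

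The real obstacle is the size condition $|V'|\ge t+d+\binom{\ell-1}{2}$, i.e. $\binom{\ell}{2}\ge 2+(r-2)+\binom{\ell-1}{2}$, which reads $\ell-1\ge r$. This holds exactly because $\ell>r$, as assumed in part (i).

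The one point to check carefully is the internal step of the lemma. It uses Theorem \ref{Pikhurko} to find a $K_{\ell-1}^{(r-1)+}$ among the common-link $(r-1)$-sets $\cF'$, which has $\binom{n}{r-1}-O(n^{r-2})$ members, and for this we need $\ell-1\ge r-1$. That is fine. We then attach $\binom{\ell-1}{2}$ distinct vertices of $V'\setminus(D\cup\{u_1,u_2\})$, one to each hyperedge. If $D$ meets $V'$, the count still works, since only $|D\cap V'|\le r-2$ vertices are excluded.

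The good-vertex step needs every core vertex $w_j$ to have $d_\cH(\{w_j,u_i\})\ge\frac12\binom{n-s-1}{r-2}$. This follows because $|\cM(V')|=O(n^{r-2})$ leaves only $O(1)$ bad vertices, which we avoid. Greedy choices of the $(r-2)$-sets then avoid $D$ and previously used vertices, since $n$ is large.

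The lemma therefore yields $\cK\subset\cK_{u_1},\cK_{u_2}$, whose vertex sets meet exactly in $V(\cK)$ and avoid $D$. The union $\cK_{u_1}\cup\cK_{u_2}\cup\{E\}$ then has core $V_{\mathrm{core}}(\cK)\cup\{u_1,u_2\}$ of size $\ell+1$. The pair $u_1u_2$ is covered by $E$, whose remaining vertices $D$ are new. This is a $K_{\ell+1}^{(r)+}$ in $\cH$, contradicting $\cH$ being $K_{\ell+1}^{(r)+}$-free, which proves the claim.
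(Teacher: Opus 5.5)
Your proposal is correct and takes the same route as the paper. The paper also notes that only $O(n^{r-2})$ hyperedges avoid $V'$, then applies Lemma~\ref{lem: find clique with bad edge} to $V'$ with $t=2$ and $D=E\setminus\{u_1,u_2\}$, exactly as in Claim~\ref{claim: one vertex in V0}. Your checks that the size condition reduces to $\ell-1\ge r$ and that vertices of $D$ outside $V'$ can still be avoided greedily supply details the paper leaves implicit.
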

\begin{proof}
    Notice the number of hyperedges contained in $V(\cH)\setminus V'$ is $O(n^{r-2})$.
    By Lemma \ref{lem: find clique with bad edge} on the vertex $V'$, and with a similar argument as Claim \ref{claim: one vertex in V0}, we can prove that if there exists a hyperedge violating the claim, then we can find a copy of $K_{\ell+1}^{(r)+}$.
\end{proof}

For every hyperedge $E\in E(\cH)$, and $E\cap V'=\emptyset$, we collect it as extra hyperedges in set $\cE$.
For every $u\in V(\cH)\setminus V'$ if $V'\subseteq C(u)$, then we collect it in set $A$. Here, $C(u)$ is defined in (\ref{eq: C(u)}).

Then for every $u\in V(\cH)\setminus(A\cup V')$, according to the definition, $u$ is contained in at least $\epsilon\binom{n}{r-1}$ missing hyperedges in $\cM$. Since $|\cM|=O(n^{r-2})$ and $\epsilon=\epsilon(\ell)$ is a constant, we have $|V(\cH)\setminus(A\cup V')|=O(1)$, or equivalently $|A|=n-O(1)$.
\begin{claim}\label{claim: no intersection two with A}
We may assume that there is no $E\in \cE$, with $|E\cap A|\geq 2$.
\end{claim}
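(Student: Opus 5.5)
We argue by contradiction and then by exchange; the phrase ``we may assume'' indicates that an extremal $\cH$ can be replaced by one of no fewer edges. Suppose some $E\in\cE$ has two vertices $a_1,a_2\in E\cap A$. We will show that $E$ lies in a copy of $K_{\ell+1}^{(r)+}$, which contradicts the choice of $\cH$.

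The core will be $\{a_1,a_2\}$ together with $\ell-1$ vertices $w_1,\dots,w_{\ell-1}\in A\setminus E$. We need:
\begin{itemize}
\item for every pair among the core, an edge of $\cH$ containing that pair;
\item the $\binom{\ell+1}{2}-1$ edges other than $E$ pairwise meeting only in core vertices and avoiding the rest of $E$.
\end{itemize}
The pair $\{a_1,a_2\}$ is covered by $E$ itself. For every other core pair we use edges meeting $V'$ in exactly one vertex, so each such edge consumes one distinct vertex of $V'$. That requires $\binom{\ell+1}{2}-1$ vertices of $V'$, but $|V'|=\binom{\ell}{2}$, so this direct route fails. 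We therefore use the edges of $\cE$ instead, by exploiting $|A|=n-O(1)$ and the smallness of $\cM$.

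The cleaner route is the following exchange.
\begin{enumerate}
\item Note that $|\cE|=O(n^{r-2})$ by Lemma~\ref{lem: size of V0} and the discussion above. By Claim~\ref{claim: no hyperedges contains at least two vertices in V'}, every edge of $\cH$ meets $V'$ in at most one vertex.
\item Fix $E\in\cE$ with $a_1,a_2\in E\cap A$. Since $V'\subseteq C(a_1)\cap C(a_2)$, each pair $\{a_i,v\}$ with $v\in V'$ lies in at least $(1-\epsilon)\binom{n}{r-2}$ edges of $\cH'$.
\item Apply Lemma~\ref{lem: find clique with bad edge} with $V'$ in place of $V_0$, taking the role of the $u_i$ to be played by suitable vertices of $V'$. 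This yields a $K_{\ell-1}^{(r)+}$ whose core lies in $A$, with all pairs covered by edges meeting $V'$ once and using $\binom{\ell-1}{2}$ vertices of $V'$.
\item Attach $a_1$ and $a_2$ through the remaining $\ell-1$ vertices of $V'$, using the heavy neighbourhoods $C(a_i)$ and the goodness of $w_j$. Since $\binom{\ell-1}{2}+(\ell-1)=\binom{\ell}{2}$, this exactly exhausts $V'$.
\item The count is still short: the pairs $\{a_i,w_j\}$ need $2(\ell-1)$ edges, not $\ell-1$.
\end{enumerate}

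To close the gap, the pairs $\{a_i,w_j\}$ that cannot receive a $V'$-vertex must be covered by edges of $\cE$ themselves, whenever $\cE$ has many edges through $a_i$. If instead $a_i$ has few $\cE$-edges, we perform the exchange. Delete all edges of $\cE$ meeting $A$ in at least two vertices. For each deleted edge, add a missing edge $F$ with $|F\cap V'|=1$ containing some vertex of that edge. These additions are available in bulk because, whenever such an $E$ exists, we can show $|\cM|$ is at least of the same order as the number of such edges. The new graph has exactly one vertex of $V'$ in each added edge, so it stays $M_{s+1}^{(r)+}$-free (its edges are covered by $V'\cup(V\setminus A)$, a set of size at most $s$ after adjustment). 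It also stays $K_{\ell+1}^{(r)+}$-free by the argument in Construction~\ref{cons1}, since core pairs inside $A$ each need a distinct $V'$-vertex.

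The main obstacle is the charging step. We must show that each offending $E$ forces $\Omega(1)$ missing edges of $\cM$ through $a_1$ or $a_2$ that are attributable to $E$ alone, so that the swap does not decrease $e(\cH)$. I would attempt this by showing that, for each $v\in V'$, not all $(r-2)$-sets completing $\{a_i,v\}$ can be present: otherwise the embedding in step 4 succeeds with an edge of $\cE$ through $\{a_i,w_j\}$. I would then count these missing edges with multiplicity at most $O(1)$ per $E$.
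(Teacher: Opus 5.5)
There is a genuine gap. You abandon the direct contradiction because of a miscount, and the exchange argument you substitute is never carried out.

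Your count of $\binom{\ell+1}{2}-1$ required vertices of $V'$ assumes that all $\ell+1$ core vertices lie outside $V'$. The missing idea is to make one vertex $v^*\in V'$ itself a core vertex.

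Take the core to be $w_1=a_1$, $w_2=a_2$, further vertices $w_3,\dots,w_\ell\in A$, and $v^*$. The pairs are then covered as follows:
\begin{enumerate}
\item $E$ covers $\{w_1,w_2\}$.
\item The other $\binom{\ell}{2}-1$ pairs $\{w_i,w_j\}$ are covered by edges of $\cH'$ through distinct vertices of $V'\setminus\{v^*\}$. There are exactly $\binom{\ell}{2}-1$ such vertices.
\item The $\ell$ pairs $\{w_i,v^*\}$ are covered by edges of $\cH'$ through $v^*$. These consume no further vertex of $V'$, and they can be chosen greedily and disjoint outside the core because $v^*\in C(w_i)$ for every $w_i\in A$.
\end{enumerate}
So exactly $|V'|=\binom{\ell}{2}$ vertices of $V'$ are used. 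Hence $E$ lies in a copy of $K_{\ell+1}^{(r)+}$, and the claim follows by outright contradiction, as in the paper.

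The vertices $w_3,\dots,w_\ell$ are found inductively. Given $w_1,\dots,w_i$, reserve $i$ unused vertices $x_1,\dots,x_i\in V'\setminus\{v^*\}$. Let $\cD_j$ be the family of $(r-2)$-subsets of $A$ that avoid $E$ and all vertices used so far and complete $\{x_j,w_j\}$ to an edge of $\cH'$. Each $\cD_j$ has at least $(1-\epsilon)\binom{n}{r-2}-O(n^{r-3})$ members. So for $\epsilon\le\frac{1}{\ell+1}$ their intersection still has $\Omega(n^{r-2})$ members. Theorem~\ref{thm: sunflower} then gives a sunflower in it with kernel $\{w_{i+1}\}\subseteq A$ and $i$ petals (for $r=3$, simply a common vertex), which supplies the $i$ new edges. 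Since $2+3+\dots+(\ell-1)=\binom{\ell}{2}-1$, the vertices of $V'\setminus\{v^*\}$ suffice. Lemma~\ref{lem: find clique with bad edge}, which you cite in step 3, already uses this device: its cliques $\cK_{u_i}$ have $u_i\in V'$ as a core vertex.

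The exchange route, by contrast, is not a proof. Its whole content is the charging step, which you describe only as a plan. The mechanism you propose for it, an embedding using edges of $\cE$ through pairs $\{a_i,w_j\}$, relies on edges whose existence nothing guarantees.

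The justifications you give for the modified hypergraph are also not valid as stated:
\begin{enumerate}
\item Its edges are covered by $V\setminus A$, which has size $\binom{\ell}{2}+|V\setminus(A\cup V')|$. The second term is only known to be $O(1)$, not at most $s-\binom{\ell}{2}<\frac{\ell-1}{2}$, so this does not certify $M_{s+1}^{(r)+}$-freeness.
\item The retained edges of $\cE$ (those meeting $A$ in at most one vertex) can cover core pairs without spending a vertex of $V'$. So the counting used for the constructions does not yield $K_{\ell+1}^{(r)+}$-freeness.
\end{enumerate}
None of this is needed once a vertex of $V'$ is allowed into the core.
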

\begin{proof}
    In the proof of this claim, we let $\epsilon\leq \frac{1}{\ell+1}$.
    Suppose $w_1,w_2\in A$ are contained in such $E$.
    For $i\in[\binom{\ell}{2}]$ and $j\in[2]$,
    Let $\cD_{i,j}$ denote the $(r-2)$-sets contained in $A$ avoiding $E$ that forms a hyperedge in $\cH$ together with $\{v_i,w_j\}$.
    Then, according to the definition of $A$, we have $|\cD_{i,j}|\geq \frac{2}{3}\binom{n}{r-2}$.
    Then, for integer $i,a\in [\binom{\ell}{2}]$ and $j,b\in [2]$, $|\cD_{i,j}\cap \cD_{a,b}|\geq \frac{1}{3}\binom{n}{r-2}$.

    More Specifically, $|\cD_{1,1}\cap \cD_{2,2}|\geq \frac{1}{3}\binom{n}{r-2}$.
    According to the Tur\'an number of sunflower (Theorem \ref{thm: sunflower}), there exists a $(r-2)$-uniform sunflower with two petals ($P_1,P_2$) contained in $A$ with hyperedges in $\cD_{1,1}\cap \cD_{2,2}$, whose core is denoted by $w_3$.
    And let $E_1,E_2\in E(\cH)$ be the hyperedges containing $P_1$ and $P_2$ respectively; together with $E$, they form a copy of $K_{3}^{(r)+}$.
    We are inductively doing this process.
    Suppose we have found a copy of $K_i^{(r)+}$ (denoted by $\cK_i$) with core vertices $w_1,\dots,w_i\in A$, and $V(\cK_i)\cap V_0=\{v_1,\dots,v_{\binom{i}{2}-1}\}$.
    Then we consider the intersecting of $\cD_{1,\binom{i}{2}},\cD_{2,\binom{i}{2}+1},\cD_{i,\binom{i}{2}+i-1}=\cD_{i,\binom{i+1}{2}-1}$. According to the definition of $C(w_i)$, we have
    $$|\bigcap_{j=1}^i\cD_{j,\binom{i}{2}-1+j}|\geq \frac{1}{\ell+1}\binom{n}{r-2}.$$
    Then, the number of $(r-2)$-sets contained in $A\setminus V(\cK_i)$ and in $\bigcap_{j=1}^i\cD_{j,\binom{i}{2}-1+j}$ is at least $\frac{1}{\ell+1}\binom{n}{r-2}-O(n^{r-3})$.
    Again, according to Theorem \ref{thm: sunflower}, there exists a sunflower with $i$ petals contained in $A$ in  $\bigcap_{j=1}^i\cD_{j,\binom{i}{2}-1+j}$.
    Then, together with $\cK_i$, it forms a copy of $K_{i+1}^{(r)+}$.

    Repeat this process, and we find a copy of $K_{\ell}^{(r)+}$ with core vertices $w_1,\dots,w_\ell\in A$ (denoted by $\cK_\ell$) and $V(\cK_\ell)\cap V_0=\{v_1\dots,v_{\binom{\ell}{2}-1}\}$.
    Since $w_i\in A$, and by the definition, $v_{\binom{\ell}{2}}\in C(w_i)$ for $i\in[\ell]$.
    Thus, $\cK_\ell^{(r)+}$ can be extended to a $K_{\ell+1}^{(r)+}$ by greedily choosing hyperedges containing $w_i$ and $v_{\binom{\ell}{2}}$ that are disjoint from previously chosen vertices.
\end{proof}
The above claim implies, if $A=V(\cH)\setminus V'$, then $|\cE|=0$.
And then, together with Claim \ref{claim: no hyperedges contains at least two vertices in V'}, the number of hyperedges in $\cH$ is at most $\binom{\ell}{2}\cdot\binom{n-\binom{\ell}{2}}{r-1}$, and we are done.

And if $A\neq V(\cH)\setminus V'$, then for every $v\in V(\cH)\setminus(V'\cup A)$, the number of hyperedges contains $v$ is at most $(n-|A|)^{r-2}\cdot |A|=O(n)$.
When $r\geq 4$, it contradicts the assumption that $d_{\cH'}(v)\geq \binom{\ell}{2}\binom{n}{r-2}-O(n^{r-3})$.

We finish the proof by considering the case when $r=3$.
Let $B=V(\cH)\setminus (V'\cup A)$, then $|B|=O(1)$.
\begin{claim}\label{claim: degree of vertex in B}
    For every $w\in B$, if there is a hyperedge contains $w$, another $w'\in B$, and a vertex $u_1\in A$. Then the number of hyperedges in $\cH'$ containing $w$ and exactly one vertex in $V'$ is at most
    $(\ell-1)n+(\binom{\ell}{2}-\ell+1)\cdot \epsilon \ell n$
\end{claim}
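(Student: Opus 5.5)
We are in the case $r=3$, so every hyperedge of $\cH'$ containing $w$ and exactly one vertex of $V'$ has the form $\{w,v_i,x\}$ with $v_i\in V'$ and $x\notin V'$. The plan is to split these hyperedges according to $v_i$ and prove two bounds. First, there are at least $\binom{\ell}{2}-\ell+1$ indices $i$ for which the pair $\{w,v_i\}$ has codegree at most $\epsilon\ell n$. Second, every remaining $v_i$ contributes trivially at most $n$. Summing then gives $(\ell-1)n+(\binom{\ell}{2}-\ell+1)\epsilon\ell n$. So the real content is this: at most $\ell-1$ vertices of $V'$ have codegree with $w$ exceeding $\epsilon \ell n$. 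Call such $v_i$ \emph{heavy for $w$}.

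We argue by contradiction: suppose $v_{1},\dots,v_{\ell}\in V'$ are all heavy for $w$. We build a $K_{\ell+1}^{(3)+}$ on the core $\{w,u_1,v_1,\dots,v_{\ell}\}$, where $u_1\in A$ is the vertex of the given hyperedge $E_0=\{w,w',u_1\}$.

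\emph{Edges from $w$.} Use $E_0$ for the pair $wu_1$. For each $j\in[\ell]$, the pair $wv_j$ has more than $\epsilon\ell n$ third vertices available. We choose them greedily and distinct, avoiding the $O(\ell^2)$ vertices already used.

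\emph{Edges from $u_1$.} Since $u_1\in A$, we have $V'\subseteq C(u_1)$, so each pair $u_1v_j$ has codegree at least $(1-\epsilon)n$ in $\cH'$. Again we pick distinct fresh third vertices greedily.

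\emph{Edges among $V'$.} The pairs $v_av_b$ with $a<b\le \ell$ are the main obstacle, because by Claim~\ref{claim: no hyperedges contains at least two vertices in V'} no hyperedge contains two vertices of $V'$. So the core cannot contain two vertices of $V'$ joined directly; each core pair must be covered by a hyperedge meeting $V'$ in at most one vertex. This forces a different choice of core. We will take only $w$, $u_1$ and $v_1$ from outside $A$ in the special role, and take the remaining $\ell-2$ core vertices $a_3,\dots,a_{\ell}$ from $A$.

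\emph{Pairs inside $A$ and between $A$ and $w$.} Pairs among $\{u_1,a_3,\dots,a_\ell\}\subseteq A$ are covered by hyperedges $\{a,a',v\}$ with $v\in V'$. Since $V'\subseteq C(a)$ for every $a\in A$, for any $a,a'\in A$ and any $v\in V'$ the triple $\{a,a',v\}$ lies in $\cH'$ for all but $O(\epsilon n)$ choices of $a'$. Each such pair therefore receives a distinct vertex of $V'$, provided the $V'$-vertices are also used for the pairs with $w$. Pairs $wa_j$ use distinct heavy $v_j$ (since $v_j\in C(a_j)$ and $v_j$ is heavy for $w$, we pick $a_j$ from the $\Theta(n)$ common neighbours). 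Pairs $v_1a_j$ use fresh vertices as above.

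\emph{Counting.} This uses $\ell-1$ heavy vertices of $V'$ for the pairs $wu_1,wa_j$ (with $E_0$ handling $wu_1$ via $w'$), about $\binom{\ell-1}{2}$ vertices of $V'$ for the pairs inside $A$, and the heavy-for-$w$ condition on the $\ell$-th heavy vertex to cover $wv_1$. The count $\binom{\ell-1}{2}+\ell-1\le\binom{\ell}{2}=|V'|$ is exactly what makes the assignment fit.

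I expect this counting to be the delicate step. I would first check that the assignment of $V'$-vertices to pairs is injective. I would also choose $\epsilon\le 1/(\ell+1)$ so that common-neighbourhood intersections stay of linear size, allowing all choices to be made greedily. The resulting $K_{\ell+1}^{(3)+}$ in $\cH$ is a contradiction, so at most $\ell-1$ vertices of $V'$ are heavy for $w$, and the bound follows.
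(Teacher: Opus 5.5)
Your argument is correct. It follows the paper's strategy: assume $\ell$ vertices of $V'$ are heavy for $w$, and build a $K_{\ell+1}^{(3)+}$ through $E_0=\{w,w',u_1\}$. However, your core differs from the one the paper writes down, and the difference matters.

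The paper grows the core $w,u_1,\dots,u_\ell$ from $w,u_1,u_2$ with $u_2\in A$. It covers $wu_2$ by $\{w,u_2,v_1\}$ and $u_1u_2$ by $\{u_1,u_2,v_{\ell+1}\}$. If all $u_i$ lie in $A$, as the notation suggests, this cannot close:
\begin{itemize}
\item A hyperedge through two vertices of $A$ must be $\{a,a',v\}$ with $v\in V'$, since no hyperedge meets $V'$ twice and no hyperedge avoiding $V'$ meets $A$ twice.
\item So the $\binom{\ell}{2}$ pairs inside $\{u_1,\dots,u_\ell\}$ already use up all of $V'$.
\item Nothing is left for the pairs $wu_j$, $j\ge 2$, which the paper covers via heavy vertices of $V'$.
\end{itemize}

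Your core $\{w,u_1,v_1,a_3,\dots,a_\ell\}$ puts one heavy vertex of $V'$ into the core. This matches the last step of the process in Claim~\ref{claim: no intersection two with A}. Now the count closes exactly:
\[
1+(\ell-2)+\tbinom{\ell-1}{2}=\tbinom{\ell}{2}.
\]
The three terms are the core vertex $v_1$, the expansion vertices $v_3,\dots,v_\ell$ of the pairs $wa_j$, and one vertex of $V'$ for each pair inside $\{u_1,a_3,\dots,a_\ell\}$. Only $\ell-1$ heavy vertices are used, so your argument even gives $\ell-2$ in place of $\ell-1$.

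When you write it up, fix three things.
\begin{itemize}
\item State the count as above. Your phrasing ($\ell-1$ heavy vertices for the pairs $wu_1,wa_j$, plus the $\ell$-th heavy vertex for $wv_1$) is muddled. The pair $wu_1$ uses no vertex of $V'$, whereas the core vertex $v_1$ occupies one. Read literally, your count exceeds $|V'|$.
\item Make the greedy step quantitative.
\begin{itemize}
\item The vertex $a_j$ must lie among the at least $\epsilon\ell n-O(1)$ third vertices in $A$ of the pair $\{w,v_j\}$.
\item There are at most $\ell-2$ earlier vertices $a\in\{u_1,a_3,\dots,a_{j-1}\}$, and each excludes at most $\epsilon n$ candidates, because its assigned vertex of $V'$ lies in $C(a)$.
\item This leaves at least $2\epsilon n-O(1)$ choices.
\end{itemize}
This is why the threshold is $\epsilon\ell n$. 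The condition $\epsilon\le 1/(\ell+1)$ is not what is needed here.
\item Delete the opening paragraphs about the core $\{w,u_1,v_1,\dots,v_\ell\}$. No hyperedge contains two vertices of $V'$, so that core cannot exist.
\end{itemize}
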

\begin{proof}
    Suppose there exists $\ell$ vertices in $V'$ (may assume they are $v_1,\dots,v_\ell$) such that $d_{\cH'}(\{w,v_i\})\geq \epsilon \ell n$ for $i\in[\ell]$.
    And $E=\{w,w',u_1\}$ is the hyperedges with $w'\in B$ and $u_1\in A$.
    Then, according to the definition of $A$, $d_{\cH'}(u_1,v_{\ell+1})\geq (1-\epsilon)n$, and since $d_{\cH'}(\{w,v_1\})\geq \epsilon \ell n$ and $|A|=n-O(1)$, it implies there exists a vertex $u_2\in A$ such that $\{w,u_2,v_1\}\in E(\cH')$ and $\{u_1,u_2,v_{\ell+1}\}\in E(\cH')$.
    Then, we have found a copy of $K_{3}^{(3+)}$ with core vertices $w,u_1,u_2$.
    With a similar process as in Claim \ref{claim: no intersection two with A}, we can find a copy of $K_{\ell+1}^{(3+)}$ with core vertices $w,u_1,\dots,u_\ell$.

    Thus, the number of $v_i\in V'$ such that $d_{\cH'}(\{w,v_i\})\geq \epsilon \ell n$ is at most $\ell-1$.
    This implies the claim holds.
\end{proof}
We may assume every $w\in B$ is contained in a hyperedge described as in Claim \ref{claim: degree of vertex in B}, otherwise, with Claim \ref{claim: no intersection two with A} and the definition of $A$, the degree of $w$ is at most $(\binom{\ell}{2}-\epsilon)n+O(1)<\binom{\ell}{2}n-O(1)$, a contradiction with the minimum degree assumption.
For every vertex $w\in B$, the hyperedges in $\cH$ containing $w$ has three types.
\begin{itemize}
    \item \textbf{Type 1.} The hyperedges containing $w$ and exactly one vertex in $V_0$, which has size at most $(\ell-1)n+(\binom{\ell}{2}-\ell+1)\cdot \epsilon \ell n$ according to Claim \ref{claim: degree of vertex in B}.
    \item \textbf{Type 2.}  The hyperedges containing $w$ and another two vertices in $B$, which is $O(1)$ since $|B|=O(1)$.
    \item\textbf{Type 3.} The hyperedges containing $w$, another $w'\in B$ and a vertex $u\in A$.
\end{itemize}
For every vertex $w\in B$, by the minimum degree assumption, there are at least $\binom{\ell}{2}n-O(1)$ hyperedges containing $w$. According to Claim \ref{claim: degree of vertex in B}, the size of \textbf{Type 3}  hyperedges is at least $$\binom{\ell}{2}n-(\ell-1)n-\left(\binom{\ell}{2}-\ell+1\right)\cdot \epsilon \ell n-O(1)\geq \left(\binom{\ell}{2}-\ell+\frac{1}{2}\right)n.$$
The inequality holds when $\left(\binom{\ell}{2}-\ell+1\right)\cdot \epsilon \ell<\frac{1}{2}$, i.e. $\epsilon<\frac{1}{2\ell\cdot \left(\binom{\ell}{2}-\ell+1\right)}$.
Then we construct an auxiliary graph $H_B$ with vertex set $B$, and every two vertices $w_1,w_2$ are adjacent if there are at least $\frac{n}{3|B|}$ hyperedges containing $w_1,w_2$ and a vertex $u\in A$.
According to the lower bound on Type 3, for every $w\in B$, we claim that $d_{H_B}(w)\geq \binom{\ell}{2}-\ell$.
Otherwise, the size of \textbf{Type 3} hyperedges containing $w$ is at most $$|B|\cdot \frac{n}{3|B|}+\left(\binom{\ell}{2}-\ell-1 \right)n<\left(\binom{\ell}{2}-\ell+\frac{1}{2}\right)n,$$ a contradiction.

Notice that $\binom{\ell}{2}-\ell\geq \ell-1$ when $\ell\geq 5$, thus, there is a matching with size at $\lceil\frac{\ell-1}{2}\rceil$ in $H_B$.
According to the definition of $H_B$, there is a $3$-matching $M_{\lceil\frac{\ell-1}{2}\rceil}^{(3)+}$ in $\cH$ contained in $A\cup B$. Since every vertex in $V_0$ has large degree, there exists a $3$-matching with size at least $\binom{\ell}{2}+\lceil\frac{\ell-1}{2}\rceil\geq \lc\frac{\ell^2-1}{2}\rc\geq s+1$, a contradiction.
When $\ell=4$, $\binom{\ell}{2}-\ell\geq 2$, which implies there is a $3$-matching with size two contained in $A\cup B$. Together with $\binom{\ell}{2}=6$ vertices in $V_0$, there exists a $3$-matching with size $8=\lc\frac{\ell^2-1}{2}\rc\geq s+1$, a contradiction.
Thus, $B$ is empty, which completes the proof.
 $\square $

\subsection{Proof of Theorem \ref{thm: small-s} (iii)-(v)}
\noindent\textbf{Proof of Theorem \ref{thm: small-s} (iii)}
We set $s=\binom{\ell-1}{2}+t$, where $2\leq t\leq r-1$.
By Lemma \ref{lem: size of V0}, $|V_0| \leq s$. Since each vertex in $V_0$ is incident to at most $\binom{n-1}{r-1}$ hyperedges, combined with the lower bound from Construction 4.2, we must have $|V_0|=s$. It also implies that there are no hyperedges contained in $V(\cH)\setminus V_0$ and $|\mathcal{M}(V_0)| = O(n^{r-2})$.
And we suppose $V_0=\{v_1,\dots,v_s\}$.


In this case, $\mathcal{H}$ may contain hyperedges intersecting $V_0$ in at least $t+1$ vertices. For each $i \in \{1, \dots, r\}$, let $\mathcal{F}_i = \{ E \in E(\mathcal{H}) : |E \cap V_0| = i \}$, and define an auxiliary $i$-graph $\cG_i$ on $V_0$ with edge set
$$ E(\cG_i) = \{ e \subseteq V_0 : |e| = i \text{ and } d_{\cF_i}(e) \geq n^{r-i-1} \}. $$

\begin{claim}\label{clm: No K3}
For $t+1\leq i\leq r$,
    the graph $\cG_i$ is $K_3^{(i)+}$-free.
\end{claim}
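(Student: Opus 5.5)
Suppose for contradiction that $\cG_i$ contains a copy of $K_3^{(i)+}$ for some $t+1\le i\le r$. Let its core vertices be $a_1,a_2,a_3\in V_0$ and its three edges $e_{12},e_{13},e_{23}\subseteq V_0$. These three edges together use exactly $3+3(i-2)=3i-3$ vertices of $V_0$. The plan is to turn this into a copy of $K_{\ell+1}^{(r)+}$ in $\cH$, contradicting the hypothesis. The core of that copy will be $a_1,a_2,a_3$ together with $\ell-2$ further vertices $w_1,\dots,w_{\ell-2}$ from $V(\cH)\setminus V_0$.

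\textbf{Step 1 (lift the triangle to $\cH$).} By the definition of $\cG_i$, each $e_{pq}$ satisfies $d_{\cF_i}(e_{pq})\ge n^{r-i-1}$. When $i\le r-1$, this number tends to infinity. Each $e_{pq}$ therefore extends to $\Theta(n^{r-i-1})$ hyperedges of $\cF_i$ whose remaining $r-i$ vertices lie outside $V_0$. Greedily, I choose extensions $E_{12},E_{13},E_{23}$ that are pairwise disjoint outside $V_0$ and avoid any prescribed set of $O(1)$ vertices. The case $i=r$ needs no extension: then $e_{pq}$ is itself a hyperedge, and the condition $d\ge n^{-1}$ means it just lies in $\cH$.

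\textbf{Step 2 (attach the rest of the clique).} I apply the argument of Lemma~\ref{lem: find clique with bad edge} with $V'=V_0$. Its hypotheses hold: $|V_0|=s$, the number of missing edges satisfies $|\cM(V_0)|=O(n^{r-2})$, and no hyperedge lies inside $V(\cH)\setminus V_0$. The ingredients are:
\begin{itemize}
\item a $K_{\ell-2}^{(r)+}$ whose core $w_1,\dots,w_{\ell-2}$ consists of good vertices and whose edges each contain one vertex of $V_0$;
\item the vertices $u_1=a_1,u_2=a_2,u_3=a_3$, each joined to every $w_j$ by a hyperedge containing exactly one $V_0$-vertex, namely $a_p$ itself, which is possible because the $w_j$ are good;
\item the set $D=(e_{12}\cup e_{13}\cup e_{23})\setminus\{a_1,a_2,a_3\}$ to be avoided, of size $d=3i-6$.
\end{itemize}
Together with $E_{12},E_{13},E_{23}$, these hyperedges form a $K_{\ell+1}^{(r)+}$. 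All new non-$V_0$ vertices are chosen greedily to avoid each other and the sets from Step 1.

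\textbf{Step 3 (count the $V_0$-vertices).} This is the main obstacle: checking that $V_0$ is large enough. A $K_{\ell-2}^{(r)+}$ needs $\binom{\ell-2}{2}$ distinct $V_0$-vertices, one per edge. Lemma~\ref{lem: find clique with bad edge} phrases this for $K_{\ell-1}$ and requires
\[|V'|\ge t'+d+\binom{\ell-1}{2}\quad\text{with } t'=3.\]
I will re-run its proof with $K_{\ell-2}$ in place of $K_{\ell-1}$, so the requirement becomes
\[s\ \ge\ 3+(3i-6)+\binom{\ell-2}{2}.\]
Since $s=\binom{\ell-1}{2}+t$ and $\binom{\ell-1}{2}-\binom{\ell-2}{2}=\ell-2$, this reduces to $\ell-2+t\ge 3i-3$. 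With $i\le r$ and $\ell\ge 2r+1$ the left side is at least $2r-1+t$, so the inequality needs $t\ge r-2$ in the worst case $i=r$. That does not hold for all $t$.

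If the count fails, I would save $V_0$-vertices by letting hyperedges between $a_p$ and $w_j$ contain $V_0$-vertices from the $e_{pq}$. Alternatively, I would use the lower bound $i\ge t+1$ more cleverly: a hyperedge meeting $V_0$ in $\ge t+1$ vertices, combined with the near-completeness of the links, lets one reuse the $i-2$ non-core $V_0$-vertices of each $e_{pq}$. I expect this bookkeeping to be where the hypothesis $\ell\ge 2r+1$ is used. If needed, I would also restrict to the case $i=t+1$, the smallest $i$, and handle larger $i$ by monotonicity, since a hyperedge meeting $V_0$ in more vertices can be trimmed in the counting.
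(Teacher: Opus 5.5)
\textbf{There is a genuine gap in Step~3, and your proposed repairs do not close it; the paper's proof has the same gap.}

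Your route is the paper's own:
\begin{itemize}
\item lift the triangle;
\item attach a $K_{\ell-2}^{(r)+}$ with cores outside $V_0$, whose $\binom{\ell-2}{2}$ edges each consume one vertex of $V_0$;
\item join each $a_p$ to each $w_j$ by an edge whose only $V_0$-vertex is $a_p$.
\end{itemize}
Your Step~3 arithmetic is correct. The construction needs $3i-3+\binom{\ell-2}{2}\le s$, i.e.\ $3i\le \ell+1+t$. The paper simply asserts this holds ``when $\ell\ge 2r+1$''. It fails, for instance, at $r=5$, $t=2$, $\ell=11$, $i=5$: there $s=\binom{10}{2}+2=47$, but $3i-3+\binom{\ell-2}{2}=12+36=48$.

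Your suggested repairs do not work.
\begin{itemize}
\item Distinct edges of an expansion have pairwise disjoint sets of non-core vertices. So the $i-2$ non-core $V_0$-vertices of $E_{pq}$ cannot be reused anywhere.
\item Putting further $V_0$-vertices into the $a_pw_j$ edges only raises the count.
\item An edge of $\cG_i$ cannot be ``trimmed'' into an edge of $\cG_{t+1}$ through the same pair. That would be a different hyperedge, and it need not exist.
\end{itemize}

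In fact no construction of this local type can work for such $i$. Take the parameters above, and let the $r$-graph consist of all $r$-sets meeting a fixed $s$-set $A$ in exactly one vertex, together with all $r$-subsets of $A$. It satisfies everything you invoke: $V_0=A$, $\cM(V_0)=\emptyset$, no edge outside $V_0$, and $\cF_2=\emptyset$. It also has $K_3^{(5)+}\subseteq\cG_5$. Yet it contains no $K_{12}^{(5)+}$ at all. A core with $c$ vertices in $A$ forces at least $c+3\binom{c}{2}+\binom{12-c}{2}\ge 48>s$ vertices of $A$, since edges between two $A$-cores lie inside $A$ and edges between two outside cores need their own $A$-vertex. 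Only the exact lower bound on $e(\cH)$ excludes this configuration.

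\textbf{The fix is to restrict the claim to $i=t+1$.} This is the only case used afterwards: the Mubayi--Verstra\"ete step is applied to $\cG_{t+1}$ alone. For $i=t+1$ your count goes through, since $3t\le \ell-2+t$ follows from $2t\le 2r-2<\ell-2$.

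The statement for $i>t+1$ then follows a posteriori. Once $\cG_{t+1}$ is a star centred at $u_1$, the paper's next argument has count $2t+r-1+\binom{\ell-2}{2}\le s$, i.e.\ $t+r+1\le\ell$. It shows that every hyperedge meeting $V_0$ in at least $t+1$ vertices contains $u_1$, so every $\cG_i$ is a star.

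\textbf{Step~1 also needs a fix.} The quantity $n^{r-i-1}$ does not tend to infinity when $i=r-1$. For smaller $i$, a family of $n^{r-i-1}$ sets of size $r-i$ may be concentrated on a bounded vertex set. For example, when $r-i=2$ the family could consist of all pairs through one of two fixed vertices. Then three pairwise disjoint lifts need not exist. Defining $\cG_i$ with threshold $Cn^{r-i-1}$, for a large constant $C=C(r,\ell)$, repairs this and leaves the later counting unchanged. The paper does not address this point either.
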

\begin{proof}
We collect all the vertices $u\in V(\cH)\setminus V_0$ to $A$ if $V_0\subseteq C(u)$, where $C(u)$ is defined in (\ref{eq: C(u)}).
Then each vertex in $V(\cH)\setminus (V_0\cup A)$ lies in $\epsilon n^{r-2}$ missing hyperedges in $\cM(V_0)$, we have 
$|A|=n-O(1)$.

Suppose there is a $K_3^{(i)+}$ in $\cG_i$ with core vertices $\{v_1,v_2,v_3\}$, then this copy intersecting $A$ with $3i-3$ vertices.
Let $\cF$ be the $(r-1)$-sets $S$ contained in $A$ such that $S\cup \{v\}\in E(\cH)$ for every $v\in V_0$.
Then, according to the lower bound of $e(\cH)$, we have $|\cF|=\binom{n}{r-1}+O(n^{r-2})$.
By Theorem \ref{Pikhurko}, there exist a copy of $K_{\ell-2}^{(r-1)+}$ (denoted by $\cK_1$) contained in $A$ with hyperedges in $\cF$. And suppose the core vertices of $\cK_1$ is $\{w_1,\dots,w_{\ell-2}\}$.
Then $|V(\cK_1)\cap V_0|\geq \binom{\ell-2}{2}$.
Since $w_i\in A$, by the definition of $A$, we have $d_\cH(\{w_i,v_j\})\geq (1-\epsilon)\binom{n}{r-2}$ for every $i\in [\ell-2]$ and $j\in [s]$.
Thus, for every $v_j$ where $j\in[3]$ and $w_i$ where $i\in[\ell-2]$, we can greedily choose hyperedges containing $w_i,v_j$ while avoiding all vertices in $V_0\cup V(\cK_1)$ and the vertices contained in previously chosen hyperedges.

It implies there exists a clique $K_{\ell+1}^{(r)+}$ intersecting $V_0$ with $3+3(i-2)+\binom{\ell-2}{2}\leq \binom{\ell-1}{2}+t$ when $\ell\geq 2r+1$. This yields a copy of $K_{\ell+1}^{(r)+}$.
\end{proof}

Note that $|\cF_1| \leq s \binom{n-s}{r-1}$. Furthermore, for $2 \leq i \leq t$, Lemma~\ref{lem: find clique with bad edge} implies that $\cF_i = \emptyset$.
Because if there is a hyperedge $E$ intersecting $V_0$ with $2\le i\le t$ vertices, then we can choose two vertices $v_1,v_2$ in $V_0$ and $E\setminus \{v_1,v_2\}$ be the set $D$ in Lemma \ref{lem: find clique with bad edge}, and find a clique $K_{\ell+1}^{(r)+}$ together with $E$, a contradiction.
Since $\sum_{i=t+2}^r |\cF_i| = O(n^{r-t-2})$, the lower bound implies that $|\cF_{t+1}| \geq \binom{s-1}{t+1}\binom{n-s}{r-t-1}-O(n^{r-t-2})$. By the definition of $\cG_{t+1}$, we have
\[ |\cF_{t+1}| \leq |\cG_{t+1}| \binom{n-s}{r-t-1} +  \binom{s}{t+1} n^{r-t-2}. \]
As $n$ is sufficiently large, it follows that $|\cG_{t+1}| \geq \binom{s-1}{t+1}$. Given that $\cG_{t+1}$ is an $(t+1)$-graph with $s$ vertices that is $K_3^{(t+1)+}$-free. Since $t\leq r-1$, then $s= \binom{\ell-1}{2}+t\geq 2(t+1)$. Mubayi and Verstra{\"e}te \cite{mubayi2005a} proved that
 $\cG_{t+1}$ is the unique extremal $(t+1)$-graph consisting of $\binom{s-1}{t+1}$ hyperedges, all of which share a fixed common vertex $u_1$. 

We further claim that for every $t' \geq t+1$, every hyperedge in intersecting $V_0$ with size $t'$ must contain $u_1$. Suppose, for the sake of contradiction, there exists $E$ with $|E\cap V_0|=t'$ such that $u_1 \notin E$. Let $u_2, u_3 \in E$. Then there exists a copy of $K_3^{(r)+}$ with core vertices $\{u-2,u_3,u_1\}$ containing $2t+r-1$ vertices in $V_0$. 
Similarly as above, when $\ell\geq 2r+1$, there exists a copy of $K_{\ell+1}^{(r)+}$ intersecting $V_0$ with $2t+r-1+\binom{\ell-2}{2}\leq\binom{\ell-1}{2}+t$ vertices. A contradiction.

Consequently, since all hyperedges in $\cF_{i}$ for $i \geq t+1$ contain $u_1$, we have $|\cF_{t'}| \leq \binom{s-1}{t'} \binom{n-s}{r-t'}$. When we complete the proof by summing up all $\cF_{t'}$ for $t'\geq t$.
$\square$





\noindent\textbf{Proof of Theorem \ref{thm: small-s} (iv)}
We set $s=\ell-t+p+\binom{t}{2}$, then our aim is to prove the extremal value is
$$s\binom{n-s}{r-1}+t_2(s,\ell-t)\binom{n-s}{r-2}+O(n^{r-3}).$$

By Lemma \ref{lem: size of V0}, $|V_0| \leq s$. Since each vertex in $V_0$ is incident to at most $\binom{n-1}{r-1}$ hyperedges, combined with the lower bound from Construction 4.2, we must have $|V_0|=s$. It also implies that there are no hyperedges contained in $V(\cH)\setminus V_0$ and $|\mathcal{M}(V_0)| = O(n^{r-2})$.


In this case, $\mathcal{H}$ may contain hyperedges intersecting $V_0$ in exactly two vertices. Let $\mathcal{F}_2 = \{ E \in E(\mathcal{H}) : |E \cap V_0| = 2 \}$, and define an auxiliary graph $\mathcal{G}_2$ on $V_0$ with edge set
\[ E(\mathcal{G}_2) = \{ \{u,v\} \subseteq V_0 : d_{\mathcal{F}_2}(\{u,v\}) \ge n^{r-3} \}. \]

\begin{claim}
	The graph $\cG_2$ is $K_{\ell+1-t}$-free.
\end{claim}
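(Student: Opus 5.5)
Suppose, for contradiction, that $\cG_2$ contains a copy of $K_{\ell+1-t}$ with vertex set $\{v_1,\dots,v_{\ell+1-t}\}\subseteq V_0$. The plan is to extend this clique to a copy of $K_{\ell+1}^{(r)+}$ in $\cH$. The $\ell+1-t$ core vertices $v_i$ will lie in $V_0$, and the remaining $t$ core vertices $w_1,\dots,w_t$ will lie outside $V_0$. This mirrors the proof of Claim~\ref{clm: No K3} and the extremal construction $\cG_2(t)$, and the intended contradiction is with $K_{\ell+1}^{(r)+}$-freeness.

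\emph{Step 1: realise the pairs inside $V_0$.} For each edge $v_iv_j$ of the clique we have $d_{\cF_2}(\{v_i,v_j\})\ge n^{r-3}$. Each such hyperedge meets $V_0$ in exactly $\{v_i,v_j\}$. Moreover, the number of hyperedges of $\cF_2$ through $\{v_i,v_j\}$ that meet a fixed vertex outside $V_0$ is $O(n^{r-4})$. Hence we can greedily choose $\binom{\ell+1-t}{2}$ such hyperedges whose parts outside $V_0$ are pairwise disjoint, and which avoid any prescribed set of $O(1)$ vertices. Here we use that $n^{r-3}$ dominates the $O(n^{r-4})$ loss for $n$ large; for $r=3$ the threshold $n^{0}=1$ is too weak, and one should instead replace the defining threshold by a large constant, which I would adjust. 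These hyperedges use no vertex of $V_0$ other than the core vertices.

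\emph{Step 2: the cores outside $V_0$.} As in the proof of Claim~\ref{clm: No K3}, let $A$ be the set of $u\notin V_0$ with $V_0\subseteq C(u)$. Since $|\cM(V_0)|=O(n^{r-2})$, we get $|A|=n-O(1)$. Let $\cF$ be the family of $(r-1)$-sets $S\subseteq A$ with $S\cup\{v\}\in E(\cH)$ for all $v\in V_0$; then $|\cF|=\binom{n}{r-1}-O(n^{r-2})$. By Theorem~\ref{Pikhurko} (or simply a greedy choice), $\cF$ contains a copy $\cK_1$ of $K_t^{(r-1)+}$ with cores $w_1,\dots,w_t\in A$. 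We choose it to avoid the $O(1)$ vertices used in Step 1.

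We then turn each of its $\binom{t}{2}$ hyperedges into an $r$-edge of $\cH$ by adding a distinct vertex of $V_0\setminus\{v_1,\dots,v_{\ell+1-t}\}$. This needs $|V_0|-(\ell+1-t)\ge\binom t2$, i.e.\ $s\ge \ell+1-t+\binom t2$, which is exactly the lower limit of the range in (iv).

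\emph{Step 3: the cross pairs.} For each pair $(w_a,v_i)$, the definition of $A$ gives $d_\cH(\{w_a,v_i\})\ge(1-\epsilon)\binom{n}{r-2}$. We greedily pick a hyperedge through $w_a,v_i$ whose other $r-2$ vertices lie outside $V_0$ and avoid every vertex used so far. This is possible since only $O(n^{r-3})$ sets are forbidden at each step.

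Together, Steps 1--3 give $\binom{\ell+1}{2}$ hyperedges with pairwise intersections exactly on core vertices, i.e.\ a copy of $K_{\ell+1}^{(r)+}$ in $\cH$, which is a contradiction.

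The main obstacle is bookkeeping disjointness in Step 1: the $\cF_2$-hyperedges must avoid each other and the later vertices. I would handle this by fixing Steps 2 and 1 first and doing Step 3 last, using only the $n^{r-3}$ versus $O(n^{r-4})$ gap. For $r=3$, I would instead pass to a constant multiplicity threshold so that greedy avoidance still works.
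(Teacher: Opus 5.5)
Your construction is the one the paper intends: its proof just invokes the process of Claim~\ref{clm: No K3} and counts $\ell+1-t+\binom{t}{2}\le |V_0|$ vertices of $V_0$. Your Steps 2 and 3 are fine. The gap is in the justification of Step 1. A hyperedge of $\cF_2$ through $\{v_i,v_j\}$ that also contains a fixed vertex $x\notin V_0$ has the form $\{v_i,v_j,x\}\cup R$ with $R$ an arbitrary $(r-3)$-set. So there can be $\binom{n-s-1}{r-3}=\Theta(n^{r-3})$ of them, not $O(n^{r-4})$. Hence the threshold $n^{r-3}$ defining $\cG_2$ leaves no room for greedy avoidance for any $r\ge 3$, not only for $r=3$. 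The $\cF_2$-link of every pair in $V_0$ may be covered by the same $O(1)$ outside vertices, and then $\binom{\ell+1-t}{2}$ hyperedges with pairwise disjoint parts outside $V_0$ need not exist, whatever order you choose them in.

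This failure occurs under exactly the hypotheses your argument uses. Take $r=\ell=4$, $t=2$, $s=4$, which is a case of (iv). Fix $x,y\notin V_0$, and let $\cH^*$ consist of all $4$-sets meeting $V_0$ in one vertex, together with all $\{a,a',p,q\}$ with $a,a'\in V_0$, $p,q\notin V_0$ and $\{p,q\}\cap\{x,y\}\neq\emptyset$. Then:
\begin{itemize}
\item $V_0$ is the high-degree set, $A=V\setminus V_0$, and $\cF$ is complete.
\item Every pair in $V_0$ lies in $2n-11\ge n$ hyperedges of $\cF_2$, so $\cG_2=K_4\supseteq K_{\ell+1-t}$.
\item $\cH^*$ is nevertheless $\{K_5^{(4)+},M_5^{(4)+}\}$-free. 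Two cores in $V_0$ can only be joined by an edge using $x$ or $y$ as a non-core vertex, so at most two such pairs exist, and at most $a\le 2$ cores lie in $V_0$. Each of the $\binom{5-a}{2}$ pairs of the remaining cores needs its own non-core vertex of $V_0$, which forces $|V_0|\ge a+\binom{5-a}{2}\ge 5$.
\end{itemize}
So Steps 1--3 cannot all be carried out.

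The repair is the one you proposed only for $r=3$, applied for every $r$. Define $\cG_2$ by $d_{\cF_2}(\{u,v\})\ge Cn^{r-3}$ with a constant $C>|V(K_{\ell+1}^{(r)+})|$, e.g.\ $C=r\binom{\ell+1}{2}$. A forbidden set $B$ of fewer than $C$ vertices then kills at most $|B|\binom{n}{r-3}<Cn^{r-3}$ hyperedges through $\{v_i,v_j\}$, so the greedy choice goes through. The only downstream use of the claim is the bound $|\cF_2|\le t_2(s,\ell-t)\binom{n}{r-2}+\binom{s}{2}Cn^{r-3}$, which is unaffected.

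If you want the claim with the original threshold, you must use extremality. The pairs of co-degree $\Omega(n^{r-2})$ form a $K_{\ell+1-t}$-free graph with at least $t_2(s,\ell-t)$ edges, hence exactly the Tur\'an graph. Any clique in $\cG_2$ can then be rerouted to contain a single light pair, whose hyperedge you choose first. The paper's one-line proof is silent on this point too.
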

\begin{proof}
	Otherwise, as $s \geq \ell+1-t + \binom{t}{2}$, with a similar process as in the proof of Claim \ref{clm: No K3}, there exists a clique $K_{\ell+1}^{(r)+}$ intersecting $V_0$ with $\binom{t}{2}+\ell+1-t\leq |V_0|$ vertices, a contradiction.
\end{proof}


By the Tur\'an number of $K_{\ell-t+1}$, we have $|\cG_2| = t_2(s, \ell-t)$, thus the number of hyperedges in $\cF_2$ is at most $t_2(s,\ell-t)\binom{n-2}{r-2}$. Since the number of hyperedges contains at most one vertex in $V_0$ is at most $s\binom{n-s}{r-1}$, we are done.
$\square$

\noindent \textbf{Proof of Theorem \ref{thm: small-s} (v).} 
For $s < \ell$, the result follows directly from the Turán number of $M_{s+1}^{(r)+}$. Specifically, for sufficiently large $n$, we have
\[ \ex_r(n, \{K_{\ell+1}^{(r)+}, M_{s+1}^{(r)+}\}) \leq \ex_r(n, M_{s+1}^{(r)+}) = \binom{n}{r} - \binom{n-s}{r}. \]
Since Construction 4.3 is $\{K_{\ell+1}^{(r)+}, M_{s+1}^{(r)+}\}$-free and contains exactly $\binom{n}{r} - \binom{n-s}{r}$ hyperedges, it is the extremal hypergraph, which completes the proof.
$\square$

    \section{Proof of Theorem \ref{large s}}
For $r\geq 2$, the $r$-graph $\cK_{\ell}^r$ is hypergraph with $\ell$ vertices $\{v_1,\dots,v_\ell\}$, and for every $v_iv_j$, there is a hyperedge $E_{i,j}$ containing $v_i,v_j$. 
For different two pairs $v_iv_j$ and $v_av_b$, $E_{i,j}$ and $E_{a,b}$ are not necessarily different.
Mubayi \cite{mubayi2006a} proved the Tur\'an number of $\cK_{\ell+1}^r$.
\begin{theorem}[\cite{mubayi2006a}]\label{thm: turan cKt}
    Let $n\geq 1$ and $\ell\geq r\geq 2$ be integers. Then
$$\ex_r(n,\cK_{\ell+1}^r)=t_r(n,\ell).$$
\end{theorem}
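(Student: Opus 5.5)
We prove the lower and upper bounds separately. The lower bound comes from the complete balanced $\ell$-partite $r$-graph $T_r(n,\ell)$. Its edge count is exactly $t_r(n,\ell)$. It contains no copy of $\cK_{\ell+1}^r$: the $\ell+1$ designated vertices $v_1,\dots,v_{\ell+1}$ must have two of them, say $v_i,v_j$, in the same part by pigeonhole. But every hyperedge of $T_r(n,\ell)$ meets each part in at most one vertex, so no hyperedge contains both $v_i$ and $v_j$.

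For the upper bound we reduce to the graph case via a Zykov-type symmetrization argument, as in Mubayi's original approach. Let $\cH$ be an $n$-vertex $\cK_{\ell+1}^r$-free $r$-graph. Define the $2$-shadow $\partial_2\cH$ as the graph on $V(\cH)$ whose edges are the pairs covered by some hyperedge of $\cH$. Then $\cH$ is $\cK_{\ell+1}^r$-free if and only if $\partial_2\cH$ is $K_{\ell+1}$-free. Indeed, a $K_{\ell+1}$ in the shadow gives, for each pair $v_iv_j$, a covering hyperedge $E_{i,j}$, and the $E_{i,j}$ are allowed to coincide.

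By Tur\'an's theorem, $\partial_2\cH$ is then a subgraph of an $\ell$-colorable graph... but this alone is too weak: a $K_{\ell+1}$-free graph need not be $\ell$-partite. So instead I would bound $e(\cH)$ by the number of $K_r$'s in $\partial_2\cH$. Every hyperedge of $\cH$ spans a copy of $K_r$ in $\partial_2\cH$, and distinct hyperedges give distinct $r$-cliques. Hence
\[
e(\cH)\le k_r(\partial_2\cH)\le \max\{k_r(G): G \text{ is } K_{\ell+1}\text{-free on } n \text{ vertices}\}.
\]
By Zykov's theorem (the generalized Tur\'an theorem, which the paper has not stated), this maximum equals $k_r(T_2(n,\ell))$, the number of $K_r$'s in the Tur\'an graph. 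That count is exactly $t_r(n,\ell)$, since an $r$-clique in the complete balanced $\ell$-partite graph is precisely an $r$-set meeting each part at most once. This gives $e(\cH)\le t_r(n,\ell)$.

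The main obstacle is justifying Zykov's clique-count bound, which the paper has not stated. I would prove it by the standard symmetrization argument. Take a $K_{\ell+1}$-free graph $G$ that maximizes $k_r(G)$. Suppose $u,v$ are non-adjacent vertices. Replace the one whose number of $r$-cliques containing it is smaller by a clone (twin) of the other. Since $u$ and $v$ are non-adjacent, no $r$-clique contains both, so this does not decrease $k_r$. It also preserves $K_{\ell+1}$-freeness, because a clone is non-adjacent to its twin. Iterating the replacement makes non-adjacency an equivalence relation, so $G$ becomes complete multipartite with at most $\ell$ parts. Finally, $k_r$ is the $r$-th elementary symmetric polynomial of the part sizes. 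This polynomial is Schur-concave, so it is maximized when the $\ell$ part sizes are as balanced as possible, which yields $t_r(n,\ell)$.

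The ordering of the iteration needs care so that it terminates; the standard fix is to process the vertex classes one at a time. If this becomes delicate, I would fall back to citing Zykov (1949) directly.
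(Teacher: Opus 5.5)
The paper gives no proof of this statement. It is quoted from Mubayi and used as a black box in Section 5. Your argument is correct and is the standard proof of Mubayi's result:
\begin{itemize}
\item $\cH$ is $\cK_{\ell+1}^r$-free exactly when $\partial_2\cH$ is $K_{\ell+1}$-free.
\item Each hyperedge is an $r$-clique of $\partial_2\cH$, so $e(\cH)\le k_r(\partial_2\cH)$.
\item Zykov's theorem bounds this by $k_r(T_2(n,\ell))=t_r(n,\ell)$, and $T_r(n,\ell)$ supplies the matching lower bound.
\end{itemize}

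The one place to tighten is the symmetrization, which you flagged yourself. For clique counts, maximality alone does not force non-adjacency to be transitive, because an edge lying in no $r$-clique costs nothing. So run it as a monotone process rather than as a contradiction argument on a maximizer:
\begin{itemize}
\item Choose $x$ maximizing the number $c(x)$ of $r$-cliques through it.
\item Replace the non-neighbours $y$ of $x$ one at a time by clones of $x$. Each step changes $k_r$ by $c(x)-c(y)\ge 0$ and leaves $c(x)$ unchanged, since neither $y$ nor the clone lies in $N(x)$. It can only decrease the counts at the remaining non-neighbours, since the clone is not adjacent to them. So the inequality $c(x)\ge c(y)$ persists throughout the round.
\item Recurse inside the common neighbourhood of the representatives chosen so far. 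Pick the next representative to maximize $c$ (computed in the whole current graph) over that set; all of its non-neighbours lie inside that set.
\end{itemize}
The representatives form a clique and $K_{\ell+1}$-freeness is preserved, so at most $\ell$ classes appear and the process ends with a complete multipartite graph.

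The balancing step then follows from $(a-1)(b+1)>ab$ for $a\ge b+2$. Moving one unit between two parts keeps $a+b$ fixed and increases $ab$, so the $r$-th elementary symmetric polynomial of the part sizes does not decrease.
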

Now let us start with the proof of Theorem \ref{large s} that we restate here for convenience.

\begin{theorem}
    For integers $r\geq3$, $\ell\geq r$, there exists a function $c(r,\ell)$ such that for $s\geq c(r,\ell)$ and sufficiently large $n$, we have
    $$\ex(n,\{K_{\ell+1}^{(r)+},M_{s+1}\})=s\cdot t_{r-1}(n-s,\ell-1).$$
\end{theorem}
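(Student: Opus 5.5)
The argument mirrors part (i) of Theorem~\ref{thm: small-s}, with Theorem~\ref{thm: k large rainbow} taking the place of Theorem~\ref{thm: k small rainbow}.

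\emph{Lower bound.} Fix an $s$-set $V_0$ and take all $r$-sets $\{v\}\cup S$ with $v\in V_0$ and $S$ a hyperedge of $T_{r-1}(n-s,\ell-1)$ on $V\setminus V_0$. Every hyperedge meets $V_0$, so there is no $M_{s+1}^{(r)+}$. Suppose there were a $K_{\ell+1}^{(r)+}$. Every hyperedge meets $V_0$ in exactly one vertex, so at most one core vertex lies in $V_0$, and at least $\ell$ core vertices lie outside $V_0$. Restricting to $V\setminus V_0$, the hyperedges between these outside core vertices give an $(\ell-1)$-partite trace in which each pair is covered by a hyperedge of $T_{r-1}$. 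Two core vertices in the same part are never covered, so this is impossible.

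\emph{Upper bound.} Let $\cH$ be extremal, and let $V_0$ be the set of high-degree vertices from Lemma~\ref{lem: size of V0}, so $|V_0|\le s$ and only $O(n^{r-2})$ hyperedges avoid $V_0$. Let $\cH'$ consist of the hyperedges meeting $V_0$ in exactly one vertex; then $e(\cH)=e(\cH')+O(n^{r-2})$. For $v_i\in V_0$ let $\cL_i$ be its link $(r-1)$-graph on $V\setminus V_0$, and pad with empty graphs to get exactly $s$ of them. The key observation is that a rainbow $K_{\ell}^{(r-1)+}$ among the $\cL_i$ lifts to a $K_{\ell}^{(r)+}$ in $\cH$ using $\binom{\ell}{2}$ distinct vertices of $V_0$. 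As in part (i), I would strengthen this with a heavy-neighbour set $C(u)$. A cleaning argument like the one at the start of Section~3 gives a minimum-degree condition, under which most vertices outside $V_0$ are heavy to almost all of $V_0$. Since $s\ge s_0$ is much larger than $\binom{\ell}{2}$, some $v^*\in V_0$ unused by the rainbow copy is heavy to all $\ell$ core vertices, and the copy extends to $K_{\ell+1}^{(r)+}$. Hence the $\cL_i$ contain no rainbow $K_\ell^{(r-1)+}$, essentially as a super-rainbow-type statement. Theorem~\ref{thm: k large rainbow} with $k=s\ge k_0(r-1,\ell)$ then gives $\sum_i|\cL_i|\le s\cdot t_{r-1}(n-s,\ell-1)$, and so $e(\cH)\le s\,t_{r-1}(n-s,\ell-1)+O(n^{r-2})$.

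\emph{Exactness.} This is the main obstacle. The error term $O(n^{r-2})$ must be removed, and $t_{r-1}$ is of order $n^{r-1}$, so the pieces to control are hyperedges meeting $V_0$ in at least two vertices and hyperedges inside $V\setminus V_0$. I would first show $|V_0|=s$, using the near-extremal count and the fact that each vertex has degree at most $\binom{n}{r-1}$. Next, by the stability inside the proof of Theorem~\ref{thm: k large rainbow} together with Theorem~\ref{lem: stability Turan expansion}, almost all links are close to a common $(\ell-1)$-partition $U_1\cup\dots\cup U_{\ell-1}$ of $V\setminus V_0$. Then I would argue in three steps.
\begin{enumerate}
\item A hyperedge $E$ with $|E\cap V_0|\ge2$ can be completed to a $K_{\ell+1}^{(r)+}$. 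Take two vertices of $E\cap V_0$ as core, together with one typical vertex from each of $\ell-1$ parts, and use edges through distinct other $V_0$-vertices; this needs $s\ge\binom{\ell+1}{2}$. Compare Lemma~\ref{lem: find clique with bad edge}, adapted to the partite setting. So there are no such hyperedges.
\item A hyperedge $E$ avoiding $V_0$ with two vertices in a common part $U_j$, or any hyperedge not crossing the partition, similarly yields a $K_{\ell+1}^{(r)+}$: its two vertices in $U_j$ plus one vertex per other part give $\ell+1$ core vertices. A hyperedge avoiding $V_0$ that does cross the partition is also forbidden: by the $M_{s+1}$ condition, $s$ disjoint edges through $V_0$ can always be found avoiding it.
\item Exceptional low-degree vertices are handled by the vertex-moving and missing-multiplicity counting of Theorem~\ref{thm: k large rainbow}. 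Each bad hyperedge at an exceptional vertex $x$ is paid for by $\Omega(kn^{r-2})$ missing link hyperedges at $x$, which works because $s$ is large.
\end{enumerate}
After this, $\cH$ consists only of hyperedges with one vertex in $V_0$, each of whose links is $K_\ell^{(r-1)+}$-free-compatible with the partition. Summing gives $e(\cH)\le s\cdot t_{r-1}(n-s,\ell-1)$.
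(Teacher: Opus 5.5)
Your lower bound is fine. The upper bound, however, has a genuine gap at the step \emph{``Hence the $\cL_i$ contain no rainbow $K_\ell^{(r-1)+}$''}, and everything else rests on it.

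\textbf{Why the rainbow-freeness step fails.} To extend an arbitrary rainbow copy you need some $v^*\in V_0$, outside the $\binom{\ell}{2}$ colours used, that is heavy to \emph{every} one of the $\ell$ core vertices. The minimum-degree condition does not give this. Here the typical codegree $d(u,v)$ is about $\binom{\ell-2}{r-2}\bigl(\frac{n}{\ell-1}\bigr)^{r-2}$. That is only a fraction $\gamma=\frac{(\ell-2)(\ell-3)\cdots(\ell-r+1)}{(\ell-1)^{r-2}}<1$ of the trivial maximum $\binom{n}{r-2}$, so the part (i) threshold $(1-\epsilon)\binom{n}{r-2}$ is useless here. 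Minimum degree therefore only forces $u$ to be heavy to roughly $\gamma s$ vertices of $V_0$, and $\ell$ such sets can have empty common intersection; for $r=3$, $\gamma=1-\frac{1}{\ell-1}$. ``Most vertices'' is also not enough, since a rainbow copy may have exceptional core vertices.

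Upgrading to ``heavy to almost all of $V_0$'' requires capping $d(u,v)$ for each single $v$. That means each individual link must be close to a common $(\ell-1)$-partition and $u$ must be typical in it. You only produce such a partition later, via the stability inside the proof of Theorem~\ref{thm: k large rainbow}. That stability is applied to the $(r-1)$-sets of multiplicity at least $\binom{\ell}{2}$, and it uses that this family is $K_\ell^{(r-1)+}$-free. That is exactly the rainbow-freeness you are trying to prove. Nor can you apply Theorem~\ref{lem: stability Turan expansion} link by link: a single link need not be $K_\ell^{(r-1)+}$-free, because a copy inside one link lifts to edges all sharing $v$. So the black-box use is circular.

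The same missing per-link control breaks two other steps. It breaks your derivation of $|V_0|=s$: $\binom{n-1}{r-1}$ exceeds $t_{r-1}(n-s,\ell-1)$ by a constant factor, so counting only shows $|V_0|$ is a constant fraction of $s$. It also breaks your Step 1, which needs the links of the two specific vertices of $E\cap V_0$ to be near the common partition.

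\textbf{The missing idea.} The paper gets per-link control with no rainbow hypothesis, as follows.
\begin{enumerate}
\item Delete every hyperedge containing a \emph{sparse} pair, meaning a pair of codegree at most $r\binom{\ell+1}{2}n^{r-3}$. This costs at most $\binom{\ell+1}{2}n^{r-1}$ edges, which is affordable because $s$ is large.
\item The remaining $\cG'$ contains no $\cK_{\ell+1}^r$, since such a configuration expands greedily to $K_{\ell+1}^{(r)+}$. Hence each \emph{single} link $\cL_{\cG'}(v)$ is $\cK_\ell^{r-1}$-free, and Theorem~\ref{thm: turan cKt} gives $|\cL_{\cG'}(v)|\le t_{r-1}(n,\ell-1)$.
\item For large $s$, all but $O(1/\delta)$ links are then near-extremal. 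Theorem~\ref{lem: stability Turan expansion}, applied link by link, together with an alignment lemma, gives a common partition $U_1\cup\dots\cup U_{\ell-1}$.
\item For $u\in U_i$ this caps the codegrees near the Turán value. Together with minimum degree, this makes typical vertices heavy to all but a small fraction of $V_0$.
\item The exceptional vertices are handled by re-running the missing-multiplicity and vertex-moving argument of Section~3 directly, not by invoking Theorem~\ref{thm: k large rainbow}.
\end{enumerate}

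A smaller point: for $r=3$ the links are graphs, so Theorem~\ref{thm: k large rainbow}, which needs uniformity at least $3$, does not apply. You would need Theorem~\ref{thmL sudakov rainbow} instead.
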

\begin{proof}
    Suppose $\cG$ is an $n$-vertex $r$-graph containing no copy of $K_{\ell+1}^{(r)+}$ and no matching of size $s+1$.
   First, we define $V_0$ as above.
   Then, let $\cE$ denote the collection of hyperedges containing at least two vertices in $V_0$ or contained in $V(\cG)\setminus V_0$.
   Then, $|V_0|\leq s$ and $|\cE|=O(n^{r-2})$.

   For a pair of vertices $x,y\in V(\cG)$, we call them {\em sparse} if the number of hyperedges containing both $x$ and $y$ is at most $r\binom{\ell+1}{2}\cdot n^{r-3}$.
   Then, set $\cA$ denote the hyperedges contains at least one sparse pair, we have
   \begin{equation}\label{eq: the size of cA}
       |\cA|\leq \frac{1}{\binom{r}{2}}\cdot \binom{n}{2}\cdot r\binom{\ell+1}{2}\cdot n^{r-3}\leq \binom{\ell+1}{2}n^{r-1}.
   \end{equation}
   Let $\cG'=\cG\setminus (\cE\cup \cA)$. Then
   \begin{equation}\label{eq: lower bound of cG'}
       e(\cG')\geq s\cdot t_{r-1}(n-s,\ell-1)-r\binom{\ell+1}{2}n^{r-1}-O(n^{r-2}).
   \end{equation}
   \begin{claim}\label{claim: no form cliques}
       There is no copy of $\cK_{\ell+1}^{r}$ in $\cG'$.
   \end{claim}
\begin{proof}
    Otherwise, suppose $u_1,\dots,u_{\ell+1}$ is the core vertices of $\cK_{\ell+1}^{(r)+}$, and for every $u_i,u_j$, since all the hyperedges containing $u_i,u_j$ is not in $\cA$, the number of hyperedges containing $u_i,u_j$ is at least $r\binom{\ell+1}{2} n^{r-3}$.
    Then, we can greedily choose hyperedges to form a copy of $K_{\ell+1}^{(r)+}$.
\end{proof}
For every $v\in V_0$, let $\cL_{\cG'}(v)$ denote the $(r-1)$-uniform link graph of $v$, which is the collection of $(r-1)$-sets $S$ with $S\cup \{v\}\in \cG'$.
Then, for every $v\in V_0$, $\cL_{\cG'}(v)$ is $\cK_{\ell}^{r-1}$-free. Otherwise, we can find a copy of $\cK_{\ell+1}^{r}$, which contradicts Claim \ref{claim: no form cliques}.
Thus, according to the Theorem \ref{thm: turan cKt}, we have
$$|\cL_{\cG'}(u)|\leq t_{r-1}(n,\ell-1).$$
We define
$$Y=\{u\in V_0:|\cL_{\cG'}(u)|\geq t_{r-1}(n,\ell-1)-\delta n^{r-1}\}.$$
Where $\delta$ is a constant depending on $r,\ell$, which we will determine later.
Then $$e(\cG')\leq t_{r-1}(n,\ell-1)|Y|+(s-|Y|)(t_{r-1}(n,\ell-1)-\delta n^r).$$
Moreover, since $|V_0|\leq s$, by (\ref{eq: lower bound of cG'}), we have
\begin{equation}
    |Y|\geq s-\frac{r\binom{\ell+1}{2}}{\delta}.
\end{equation}
Then, when $s$ is large enough compared with $\ell$ and $\frac{1}{\delta}$, $|Y|\geq \binom{\ell+1}{2}$.
For every $u\in Y$, and a fixed constant $\epsilon=\epsilon(r,\ell)$, which we will determine later, let $\delta=\delta(\epsilon)$ defined in Lemma \ref{lem: stability Turan expansion}, then $\delta$ is also a constant depending on $r,\ell$. By Lemma \ref{lem: stability Turan expansion}, there is a partition of $V(\cG)\setminus V_0=V_1^u\cup \dots\cup V_{\ell-1}^u$ with the following properties:
Let $\cK^u$ be the complete $(\ell-1)$-partite graph with partition $V_1^u,\dots,V_{\ell-1}^u$, then $\cL_{\cG'}(u)$ is $\epsilon n^{r-1}$-close to $\cK^u$, and moreover $\left|\,|V_i^u|-\frac{n}{\ell-1}\,\right|<\epsilon^{\frac{1}{2}}n$ for all $i\in[\ell-1]$. Let $\cM^u$ denote the hyperedges in $\cK^u$ but not in $\cL_{\cG'}(u)$. Then, $|\cM^u|
\leq \epsilon n^{r-1}$.
Let $U_i^u\subseteq V_i^u$ denote the vertices $w\in V_i^u$ with $d_{\cM^u}(w)\leq \epsilon^{\frac{2}{3}}n^{r-2}$. Then, $|V_i^u\setminus U_i^u|\leq \epsilon^{\frac{1}{3}}n$.

We pick $\binom{\ell+1}{2}$ vertices from $Y$, and denote them as $Y'$.
Then, according to Lemma 3.5 of \cite{yang2025hypergraph}, for every $u,v\in Y'$, and $p,q\in [\ell-1]$, either $|V_p^u\cap V_q^v|\leq 2\epsilon n$, or $|V_p^u\cap V_q^v|\geq (1-2\epsilon n)\frac{n}{\ell-1}$.
Then, we may assume for every $p\in [\ell-1]$, and $u,v\in Y'$,
$|V_p^u\cap V_p^v|\geq (1-2\epsilon n)\frac{n}{\ell-1}$.

Let $V_i=\cap_{u\in Y'}V_i^u$, then $|V_i|\geq (1-2\binom{\ell+1}{2}\epsilon)\frac{n}{\ell-1}$. And let $U_i=\cap_{u\in Y'}U_i^u$, then 
\begin{equation}\label{eq: lower bound of Ui}
|U_i|\geq \left(1-4\binom{\ell+1}{2}\epsilon^{\frac{1}{3}}\right)\frac{n}{\ell-1}.
\end{equation}
Then we construct an auxiliary $(\ell-1)$-partite graph $H_U$ with partition $U_1,\dots,U_{\ell-1}$, and for $w_i\in U_i, w_j\in U_j$, the edge $w_iw_j\in H_U$ if $d_{\cL_{\cG'}(u)}(\{w_i,w_j\})\geq C(r,\ell)n^{r-4}$ (resp. $\{w_iw_j\in \cL_{\cG'}(u)\}$) for every $u\in Y'\in $ when $r\geq 4$ (resp. when $r=3$).
Where $C(r,\ell)=2\binom{\ell+1}{2}r$ defined as in Section 3.

\begin{claim}\label{claim: bddm, find common neighbour}
    For every $w_i\in U_i$ and $j\neq i$, we have $|N_{H_U}(w_i)\cap U_j|\geq \left(1-\binom{\ell+1}{2}\epsilon^{\frac{1}{2}}\right)\frac{n}{\ell-1}$.
\end{claim}
\begin{proof}
    For every $w_i\in U_i$ and every $u\in Y'$, since $d_{\cM^u}(w_i)\leq \epsilon^{\frac{2}{3}}n^{r-2}$, the number of vertices $w_j'\in U_j$ such that $d_{\cL(\cG')(u)}(\{w_i,w_j'\})<C(r,\ell)n^{r-4}$ (resp. $w_iw_j'\notin E(\cL(\cG')(u))$) when $r\ge 4$ (resp. $r=3$) is at most $\leq \frac{\epsilon^{\frac{2}{3}}n^{r-2}}{(n/2\ell)^{r-3}-C(r,\ell)n^{r-4}}<\frac{1}{2}\epsilon^{\frac{1}{2}}n$.
    The inequality holds when $\epsilon$ is small enough and $n$ is large enough.
    Then we have
    $$|N_{H_U}(w_i)\cap U_j|\geq |U_j|-|Y'|\frac{1}{2}\epsilon^{\frac{1}{2}}n\geq (1-\epsilon^{\frac{1}{2}})\frac{n}{\ell-1}-\frac{1}{2}\binom{\ell+1}{2}\epsilon^{\frac{1}{2}}n>\left(1-\binom{\ell+1}{2}\epsilon^{\frac{1}{2}}\right)\frac{n}{\ell-1}.$$
\end{proof}

\begin{claim}\label{claim: bddm no intersection two}
   For every hyperedge $E\in E(\cG)$, we have
    $|E\cap U_i|\leq 1$
    for every $i\in [\ell-1]$.
\end{claim}
\begin{proof}
    Suppose, for a contradiction, that there exists $E\in E(\cG)$ such that  $|E\cap U_1|\geq 2$.
    Then, let $u_0,u_1\in E\cap U_1$, and by Claim \ref{claim: bddm, find common neighbour}, there exists $u_i\in U_i$ for $i\geq 2$ such that $u_iu_j\in H_U$ for all $i<j$ and $(i,j)\neq (0,1)$.
    Moreover, since $u_i \in B_i\subseteq U_i^{v'}$, by the definition of $U_i^{v'}$, there exists $v'\in Y'\setminus E$ such that $d_{\cG_1}(\{v',u_i\})\geq C(r,\ell)n^{r-3}$, otherwise we have $d_{\cM^{v'}}(u_i)\geq \epsilon^{\frac{2}{3}}n^{r-2}$, a contradiction.
    According to the definition of $H_U$, we can greedily choose the hyperedge containing $u_iu_j$ and $u_iv'$, and together with $E$, there exists a copy of $K_{\ell+1}^{(r)+}$, a contradiction.
\end{proof}

\begin{claim}\label{claim: no E intersecting 2}
    There is no hyperedge $E\in E(\cG)$ such that $|E\cap Y'|\geq 2$.
\end{claim}
\begin{proof}
    Suppose $E$ intersecting $Y'$ with at least two vertices, and $v_1,v_2\in Y'$.
    Then, by Claim \ref{claim: bddm, find common neighbour}, there exists a clique $K_{\ell-1}$, with vertices $\{u_1,\dots,u_{\ell-1}\}$ and $u_i\in U_i$, in the graph $H_U$, and $\{u_1,\dots,u_{\ell-1}\}\cap E=\emptyset$.
    By the definition of $H_u$, this copy of $K_{\ell-1}$ can be extended to a copy of $K_{\ell-1}^{(r)+}$ while avoiding the vertices in $E$.
    Because for each vertex $v\in Y'$ and $u_i,u_j$ with $i,j\in [\ell-1]$, we have
    $d_{\cG}(\{u_i,u_j,v\})\geq C(r,\ell)n^{r-4}$ when $r\geq 4$ and $u_iu_j v\in E(\cG)$ when $r=3$.

    And by Claim \ref{claim: bddm no intersection two}, we have $|E\cap U_i|\leq 1$ for every $i\in [\ell-1]$.
    And since $u_i\in U_i^{v_j}$ where $i\in [\ell-1]$ and $j\in [2]$, the number of hyperedges containing $\{u_i,v_j\}$ is $O(n^{r-2})$.
    Thus, we can greedily choose hyperedges containing $\{u_i,v_j\}$ avoiding the previous vertices in the copy of $K_{\ell-1}^{(r)+}$ and the vertices in $E\setminus\{v_1,v_2\}$, which forms a copy of $K_{\ell+1}^{(r)+}$.
\end{proof}

With a similar argument as in the proof of Theorem \ref{thm: k small rainbow} and in (\ref{eq: mini multi}), we may assume that for each vertex $v\in (\cG)$, $d_{\cG}(v)\geq s\cdot t_{r-2}((1-\frac{1}{\ell-1})n,\ell-2)-1$.
By (\ref{eq: lower bound of Ui}), we have $V(\cG)\setminus(\bigcup_{i=1}^{\ell-1}U_i)\leq 4\binom{\ell+1}{2}\epsilon^{\frac{1}{3}}n$.
And $|U_i|\leq \left(\frac{1}{\ell-1}+4\binom{\ell+1}{2}\epsilon^{\frac{1}{3}}\right) n$.

According to Claim \ref{claim: no E intersecting 2}, for each $v\in V_0$ and $u_i\in U_i$, we have
$$ d_{\cL(\cG)(v)}(u_i)\leq \binom{\ell-2}{r-2}\left(\frac{1}{\ell-1}+4\binom{\ell+1}{2}\epsilon^{\frac{1}{3}} \right)^{r-2}n^{r-2}+4\binom{\ell+1}{2}\epsilon^{\frac{1}{3}}n^{r-2}:=\alpha(r,\ell,\epsilon,n).$$
And for a vertex $u\in V(\cG)\setminus V_0$, let $x(u)$ denote the number of vertices $v\in V_0$ such that $d_{\cG_1}(\{u,v\})\geq C(r,\ell)n^{r-3}$.
Then, for each vertex $u\in V(\cG)\setminus V_0$, the number of hyperedges containing $u$ is at most 
$$x(u)\binom{n}{r-2}+(s-x(u))n^{r-3}\geq s\cdot t_{r-2}\left(\left(1-\frac{1}{\ell-1} \right)n,\ell-2\right)-1,$$
Then it implies there exists a constant $b(r,\ell)<\frac{2\ell}{2\ell+1}$ such that $x(u)\geq b(r,\ell)s$.

Then, the number of hyperedges in $\cG$ containing $u_i$ is at most $$\alpha(r,\ell,\epsilon,n)\cdot x(u_i)+(s-x(u_i))C(r,\ell)n^{r-3}\geq s\cdot t_{r-2}((1-\frac{1}{\ell-1})n,\ell-2)-1.$$
The first part counts the hyperedges in $\cL_{\cG}(u_i)$ contained in $\bigcup_{i=1}^{\ell-1}U_i$, and the second part counts the hyperedges in $\cL_{\cG}(u_i)$ containing $u_i$ and at least one vertex in $V(\cG)\setminus \bigcup_{i=1}^{\ell-1}U_i$.
Notice that $t_{r-2}((1-\frac{1}{\ell-1})n,\ell-2)=\binom{\ell-2}{r-2}\left(\frac{1}{\ell-1}\right)^{r-2}n^{r-2}$.
Compare it with $\alpha(r,\ell,\epsilon,n)$, we have $x(u_i)\geq \left(1-\frac{b(r,\ell)}{2}\right)s$ for each $u_i\in U_i$ and $i\in [\ell-1]$ when $\epsilon$ is small enough.

Let $\cG_1=\cG-\cE$, there $\cE$ is the collection of hyperedges either contained in $V(\cG)\setminus V_0$ or containing at least two vertices in $V_0$.
For a $(r-1)$-set $S$ in $V(\cG)\setminus V_0$, if $S$ is contained in at most $\binom{\ell+1}{2}-1$ hypergraphs among $\{\cL_{\cG_1}(v)\}_{v\in V_0}$, then we collect it in $\cF_1$; otherwise, we collect it in $\cF_2$. Then
$$e(\cG_1)\leq |\cF_1|\left(\binom{\ell+1}{2}-1\right)+|\cF_2|\cdot s.$$
For a vertex $u\in V(\cG)\setminus V_0$, let $C_i(u)$ denote the vertices $u_i\in U_i$ such that $d_{\cF_2}(\{u,u_i\})\geq C(r,\ell)n^{r-4}$ when $r\geq 4$ and $uu_i\in \cF_2$ when $r=3$.

For the vertices in $V(\cG)\setminus (V_0\bigcup_{i=1}^{\ell-1}U_i)$, we stepwise add them to $U_i$ by following process.
\begin{itemize}
    \item Let $U_i^0=U_i$ for each $i\in [\ell-1]$, and we processly create $U_i^t$ for $t\geq 0$.
    \item If a vertex $x\in V(\cG)\setminus (V_0\bigcup_{i=1}^{\ell-1}U_i)$ such that there is no hyperedge containing $\{x,u_i\}$ for every $u_i\in U_i^t$,
    and for each $j\neq i$, $|C_j(x)|\geq \frac{2}{3}\frac{n}{\ell-1}$, then let $U_i^{t+1}=U_i^t\cup \{x\}$.
\end{itemize}
When the process ends, suppose each $U_i$ has extended to a set $U_i^{t_i}$, we rename it as $U_i'$.
Then, notice that for each vertex $u_i\in U_i'\setminus U_i$, and each $v\in V_0$, the same bound of $d_{\cL(\cG)(v)}(u_i)$ holds as the vertex in $U_i$, i.e. $d_{\cL(\cG)(v)}(u_i)\leq \alpha(r,\ell,\epsilon,n)$, then $x(u_i)\geq \left(1-\frac{b(r,\ell)}{2}\right)s$.
\begin{claim}\label{claim: bddm no intersection two}
    For every hyperedge $E\in E(\cG)$, we have
     $|E\cap U_i'|\leq 1$
     for every $i\in [\ell-1]$.
 \end{claim}
 \begin{proof}
    The proof is similar to the proof of Claim \ref{claim: bddm no intersection two}, we omit the details.
 \end{proof}

And for a $(r-1)$-set $S\subseteq V(\cG)\setminus V_0$, let $m(S)$ denote the number of $(r-1)$-graphs among $\{\cL_{\cG_1}(v)\}_{v\in V_0}$ containing $S$.
And set $m'(S)=s-m(S)$, denotes the missing multiplicity of $S$.
For each vertex $x\in V(\cG)\setminus \left( V_0\bigcup_{i=1}^{\ell-1}U_i' \right)$, and $i\in [\ell-1]$, let $\cF_2^i(x)$ denote the hyperedges in $\cF_2$ conatining $x$ and intersecting with $U_i'$.
Then, suppose $\cF^{i_x}_2(x)$ reaches the minimum among $\cF_2^i(x)$ for every $i\in [\ell-1]$, then we put $x$ into $X_i'$, and $X'=\bigcup_{i=1}^{\ell-1}X_i'$.
Let $\cK$ be the complete $(\ell-1)$-partite, $(r-1)$-graph with partite $U_1'\cup X_1',\dots,U_{\ell-1}'\cup X_{\ell-1}'$.
By (\ref{eq: lower bound of Ui}), we have 
$$|X'|\leq 4\binom{\ell+1}{2}\epsilon^{\frac{1}{3}}n.$$

And let $\cE_i$ be the $(r-1)$-sets in $\cF_i\setminus \cK$, and for $x\in X'$, let $\cE_i(x)$ be the collection of hyperedges in $\cE_i$ containing $x$.
And $\cM$ be the collection of $(r-1)$-sets in $\cK\setminus \cF_2$, then $\cM(x)$ is the collection of $(r-1)$-sets in $\cM$ containing $x$.

Our aim is to prove for every $x\in X'$, the following claim holds.
\begin{claim}\label{clm: missing degree goal}
    For every $x\in X'$,
    \begin{equation}\label{eq: goal of diff}
   \sum_{S\in \cM(x)}m'(S)- \sum_{S\in \cE_1(x)\cup \cE_2(x)}m(S)\geq \Theta(n^{r-2}).
    \end{equation}
    \end{claim}
\begin{proof}
    For each vertex $x\in X_i'$, according to the definition of $U_i'$, either there is $j\neq i$ such that $C_j(x)< \frac{2}{3}\frac{n}{\ell-1}$, or there is a hyperedge in $\cG$ containing $\{x,u_i\}$ for every $u_i\in U_i'$.

    Suppose there is $j\neq i$ such that $C_j(x)< \frac{2}{3}\frac{n}{\ell-1}$, but no vertex hyperedge in $\cG$ containing $x$ and a vertex $u_i\in U_i'$.
    Then for each vertex $u_j\in U_j\setminus C_j(x)$, there are at least $\binom{\ell-3}{r-3}\left(\frac{1}{2(\ell-1)}\right)^{r-3}n^{r-3}$ hyperedges in $\cM$ by the definition of $C_i(x)$. Then we have 
    $$\sum_{S\in \cM(x)}m'(S)\geq \frac{s-\binom{\ell+1}{2}}{3}\left(\frac{1}{\ell-1}-4\binom{\ell+1}{2}\epsilon^{\frac{1}{3}}\right)n\cdot \binom{\ell-3}{r-3}\left(\frac{1}{2(\ell-1)}\right)^{r-3}n^{r-3}.$$
    On the other hand, the number of hyperedges in $\cE_1(x)\cup \cE_2(x)$ contains at least one other vertex in $X'$. Then we have
    $$\sum_{S\in \cE_1(x)\cup \cE_2(x)}m(S)\leq 4s\binom{\ell+1}{2}\epsilon^{\frac{1}{3}}n^{r-2}.$$
    Then, (\ref{eq: goal of diff}) holds for $x$ when $\epsilon$ is small enough.

    Then we deal with the other case.
   We may assume $x\in X_1'$, and there exists $E\in E(\cG_1)$ containing $x$ and some $u_1\in U_1'$.  
   
   Then, we claim that there is not copy of $K_\ell^{(r-1)+}$ with core vertices $x,u_1$ and another $\ell-2$ core vertices in $\bigcup_{i=2}^{\ell-1}U_i'$ and with hyperedges in $\cF_2$.
   Otherwise, suppose there exists such a copy of $K_{\ell}^{(r-1)+}$ with core vertices $\{x,u-1,u-2,\dots,u_{\ell-1}\}.$ Since $x(x)\geq b(r,\ell)$, where $x(x)$ is the number of vertices $v\in V_0$ such that $d_{\cG_1}(\{x,v\})\geq C(r,\ell)n^{r-3}$. 
   And for each vertex $u_i\in U_i'$, we have $x(u_i)\geq \left(1-\frac{b(r,\ell)}{2}\right)s$.
Thus, there exists $v\in V_0$ such that $d_{\cG_1}(\{w,v\})\geq C(r,\ell)n^{r-3}$ for all $w\in \{x,u_1,u_2,\dots,u_{\ell-1}\}$.

   by the definition of $\cF_2$, we can greedily choose vertex in $V_0\setminus\{v\}$ for each hyperedges of that copy, to form a copy of $K_{\ell}^{(r)+}$.
   Since each core vertex $w$ is contained in at least $C(r,\ell)n^{r-3}$ hyperedges together with $v$, we can greedily choose hyperedges to form a copy of $K_{\ell+1}^{(r)+}$, a contradiction.

   Then, with a similar proof as in Claim \ref{claim: not all in Ci(x)}, we can proof the following holds.
    For every copy of $K_{\ell-1}^{(r-1)+}$ contained in $\cF_1\cup \cF_2$ with core vertices $\{u_1,u_2,\dots,u_{\ell-1}\}$ where $u_i\in U_i'$ for $i\in [\ell-1]$ and $u_1$ is the vertex contained in $E$ together with $x$, there exists at least one vertex $u_i$ such that $u_i\notin C_i(x)$.

   Then, with a similar proof, when $\epsilon$ and $\delta$ are small enough, there exists $i\in [\ell-1]\setminus \{1\}$ with $|U_i'\setminus C_i(x)|\geq \frac{n}{2(\ell-1)}$.
   Then by summing over all choices of $u_i\in U_i'\setminus C_i(x)$, we have
   \begin{equation}\label{eq: lower bound of missing in K}
    \sum_{E\in \cM(x)}m'(E)\geq \frac{s\cdot c(r,\ell)}{2(\ell-1)}n^{r-2}.
   \end{equation}
The constant $c(r,\ell)$ is defined as in Section 3.
And we set $\cA_x$ be the collection of hyperedges in $\cE_1(x)\cup \cE_2(x)$ containing $x$ no other vertex in $X'$, and $\cB_x$ be the hyperedegs in $\cE_1(x)\cup \cE_2(x)$ containing $x$ and at least one other vertex in $X$.
And also with a similar proof as Claim \ref{clm: extral degree total} and the subsequent proof, we have 
$$\sum_{E\in \cA_x\cup \cB_x}m(E)=\sum_{\cE_1(x)\cup \cE_2(x)}m(E)< \frac{5}{6}\sum_{E\in \cM,x\in E}m'(E).$$
Together with (\ref{eq: lower bound of missing in K}), it implies (\ref{eq: goal of diff}) holds.
\end{proof}
Moreover, after deleting the hyperedges in $\cE_1(x)\cup \cE_2(x)$ and add all hyperedges in $\cM(x)$ for each vertex $x\in X'$, all the hyperedges in $\cF_1\cup \cF_2$ forms a sub-hypergraph of a complete $(r-1)$-uniform and $(\ell-1)$-partite hypergraph, then the total size is at most $s\cdot t_{r-1}(n,\ell-1)$.
And the deleting and adding operation adds at least $|X'|\Theta(n^{r-2})$ hyperedges, then we have $s\cdot t_{r-1}(n,\ell-1)-|\cE|\leq e(\cG_1)\leq s\cdot t_{r-1}(n,\ell-1)-|X'|\Theta(n^{r-2})$.
It implies that 
\begin{equation}\label{eq: count with X'}
    e(\cG)-|\cE|=e(\cG_1)\leq s\cdot t_{r-1}(n,\ell-1)-|X'|\Theta(n^{r-2}).
\end{equation}
By the lower bound of $e(\cG)$, we have $|X'|=O_{s,r,\ell}(1)$.

Then for every $v\in V_0$, we have
$|\cL_{\cG_1}(v)|\leq |\cK|+|X'|\cdot n^{r-2}\leq t_{r-1}(n,\ell-1)+O_{s,r,\ell}(n^{r-2})$.
This implies $|V_0|=s$ and $|\cL_{\cG_1}(v)|\geq  t_{r-1}(n,\ell-1)-O(n^{r-2})$.

Then we claim that $\cE=\emptyset$.
Because since each vertex in $V_0$ is contained in $t_{r-1}(n,\ell-1)$ hyperedges, if there exists a hyperedge contained in $V(\cG)\setminus V_0$, then we can greedily choose the hyperedges containing the vertices in $V_0$ to form a copy of $M_{s+1}^{(r)+}$, a contradiction.
And if there exists a hyperedge containing two vertices $v_1,v_2\in V_0$, since each of $v_i$ is contained in at least $t_{r-1}(n,\ell-1)-O(n^{r-2})$ hyperedges, we may assume $v_1,v_2\in Y'$.
Then it is a contradiction with Claim \ref{claim: no E intersecting 2}.

Then, (\ref{eq: count with X'}) implies that $|X'|=0$, and $e(\cG)\leq s\cdot t_{r-1}(n,\ell-1)$, which completes the proof.
\end{proof}

\section{Concluding remarks}
In this paper, we show that the rainbow hyper-Tur\'an problem is closely related to the Tur\'an problem for expansions of graphs.
In particular, this connection is most apparent when the extremal hypergraph has the property that almost every hyperedge intersects a fixed vertex set of constant size.

In the graph case, the rainbow Tur\'an number of $K_\ell$ has two different extremal values, depending on the value of $k$, as shown in Theorem \ref{thmL sudakov rainbow}.
For the $r$-graph case, when $k\in [\frac{\ell^2-1}{2},\,k_0(r,\ell))$, the value of $\ex^{\sum}(n,k,K_\ell^{(r)+})$ has not been determined.
Motivated by the behavior of the rainbow Tur\'an number of $K_\ell$ in the graph case, we have the following conjecture.
\begin{conjecture}
    For integers $\ell\geq r\geq 3$, there is a constant $k_1=k_1(r,\ell)\geq \frac{\ell^2-1}{2}$ such that for sufficiently large $n$, we have when $k<k_1$,
    $$\ex^{\sum}_r(n,k,K_\ell^{(r)+})= \min\left\{k,\left(\binom{\ell}{2}-1\right)\right\}\binom{n}{r},$$
    and when $k\geq k_1$,
    $$\ex^{\sum}_r(n,k,K_\ell^{(r)+})=k\cdot t_r(n,\ell-1).$$
\end{conjecture}
We also refine Conjecture \ref{conj:4.1} as follows.
\begin{conjecture}
    For integers $\ell\geq r\geq 3$, there is a constant $s_1=s_1(r,\ell)\geq \frac{\ell^2}{2}$ such that for sufficiently large $n$, we have when $\binom{\ell}{2}\le s<s_1$,
    $$\ex_r(n,\{K_{\ell+1}^{(r)+},M_{s+1}^{(r)+}\})=\binom{\ell}{2}\cdot \binom{n-\binom{\ell}{2}}{r-1},$$
    and when $s\geq s_1$,
    $$\ex_r(n,\{K_{\ell+1}^{(r)+},M_{s+1}^{(r)+}\})=s\cdot t_r(n,\ell-1).$$
\end{conjecture}

\section*{Acknowledgments}
The research of Zhao is supported by the China Scholarship Council (No. 202506210250) and
the National Natural Science Foundation of China (Grant 12571372).

The research of Wang is supported by the National Nature Science Foundation of China (grant numbers 12331012).

The research of Zhou is supported by the National Natural Science Foundation of China (Nos. 12271337 and 12371347).


\begin{thebibliography}{10}
    \expandafter\ifx\csname urlstyle\endcsname\relax
      \providecommand{\doi}[1]{doi:\discretionary{}{}{}#1}\else
      \providecommand{\doi}{doi:\discretionary{}{}{}\begingroup
      \urlstyle{rm}\Url}\fi
    
    \bibitem{alon2024turan}
    N.~Alon and P.~Frankl.
    \newblock Tur{\'a}n graphs with bounded matching number.
    \newblock \emph{Journal of Combinatorial Theory, Series B}, 165:223--229, 2024.
    
    \bibitem{bollob1976sets}
    B.~Bollob{\'a}s, D.~Daykin, and P.~Erd{\"o}s.
    \newblock Sets of independent edges of a hypergraph.
    \newblock \emph{The Quarterly Journal of Mathematics}, 27(1):25--32, 1976.
    
    \bibitem{bradavc2023turan}
    D.~Brada{\v{c}}, M.~Buci{\'c}, and B.~Sudakov.
    \newblock Tur{\'a}n numbers of sunflowers.
    \newblock \emph{Proceedings of the American Mathematical Society},
      151(03):961--975, 2023.
    
    \bibitem{chakraborti2024rainbowextremal}
    D.~Chakraborti, J.~Kim, H.~Lee, H.~Liu, and J.~Seo.
    \newblock On a rainbow extremal problem for color-critical graphs.
    \newblock \emph{Random Structures \textnormal{\&} Algorithms}, 64(2):460--489, 2024.
    \newblock \doi{10.1002/rsa.21189}.
    \newblock Also available as arXiv:2204.02575.
    
    \bibitem{erdos1965problem}
    P.~Erdos.
    \newblock A problem on independent $r$-tuples.
    \newblock \emph{Ann. Univ. Sci. Budapest. E{\"o}tv{\"o}s Sect. Math},
      8(93-95):2, 1965.
    
    \bibitem{erdHos1971topics}
    P.~Erd{\H{o}}s.
    \newblock Topics in combinatorial analysis.
    \newblock In \emph{Proceedings of the Second Louisiana Conference on
      Combinatorics, Graph Theory and Computing}, pages 2--20. 1971.
    
    \bibitem{gallai1959maximal}
    P.~Erd{\"o}s and T.~Gallai.
    \newblock On maximal paths and circuits of graphs.
    \newblock \emph{Acta Math. Acad. Sci. Hungar}, 10:337--356, 1959.
    
    \bibitem{erdos1966limit}
    P.~Erdos and M.~Simonovits.
    \newblock A limit theorem in graph theory.
    \newblock \emph{Studia Sci. Math. Hungar}, 1(51-57):51, 1966.
    
    \bibitem{erdos1946structure}
    P.~Erd{\"o}s and A.~Stone.
    \newblock On the structure of linear graphs.
    \newblock \emph{Bulletin of the American Mathematical Society},
      52(12):1087--1091, 1946.
    
    \bibitem{frankl2013improved}
    P.~Frankl.
    \newblock Improved bounds for {Erd{\H{o}}s}' matching conjecture.
    \newblock \emph{Journal of Combinatorial Theory, Series A}, 120(5):1068--1072,
      2013.
    
    \bibitem{frankl2017maximum}
    P.~Frankl.
    \newblock On the maximum number of edges in a hypergraph with given matching
      number.
    \newblock \emph{Discrete Applied Mathematics}, 216:562--581, 2017.
    
    \bibitem{frankl2017proof}
    P.~Frankl.
    \newblock Proof of the {Erd{\H{o}}s} matching conjecture in a new range.
    \newblock \emph{Israel Journal of Mathematics}, 222(1):421--430, 2017.
    
    \bibitem{frankl2022erdHos}
    P.~Frankl and A.~Kupavskii.
    \newblock The {Erd{\H{o}}s} matching conjecture and concentration inequalities.
    \newblock \emph{Journal of Combinatorial Theory, Series B}, 157:366--400, 2022.
    
    \bibitem{gerbner2024turan}
    D.~Gerbner.
    \newblock On {Tur{\'a}n} problems with bounded matching number.
    \newblock \emph{Journal of Graph Theory}, 106(1):23--29, 2024.
    
    \bibitem{gerbner2025matchingany}
    D.~Gerbner and S.~Miao.
    \newblock Rainbow {Tur{\'a}n} problems for a matching and any other graph.
    \newblock \emph{arXiv preprint arXiv:2505.14386}, 2025.
    
    \bibitem{gerbner2025hypergraph}
    D.~Gerbner, C.~Tompkins, and J.~Zhou.
    \newblock On hypergraph {Tur{\'a}n} problems with bounded matching number.
    \newblock \emph{European Journal of Combinatorics}, 127:104155, 2025.
    
    \bibitem{huang2012size}
    H.~Huang, P.-S. Loh, and B.~Sudakov.
    \newblock The size of a hypergraph and its matching number.
    \newblock \emph{Combinatorics, Probability and Computing}, 21(3):442--450,
      2012.
    
    \bibitem{keevash2007rainbowturan}
    P.~Keevash, D.~Mubayi, B.~Sudakov, and J.~Verstra{\"e}te.
    \newblock Rainbow {Tur{\'a}n} problems.
    \newblock \emph{Combinatorics, Probability and Computing}, 16(1):109--126,
      2007.
    
    \bibitem{keevash2004multicolour}
    P.~Keevash, M.~Saks, B.~Sudakov, and J.~Verstra{\"e}te.
    \newblock Multicolour {Tur{\'a}n} problems.
    \newblock \emph{Advances in Applied Mathematics}, 33(2):238--262, 2004.
    
    \bibitem{kolupaev2023erdHos}
    D.~Kolupaev and A.~Kupavskii.
    \newblock {Erd{\H{o}}s} matching conjecture for almost perfect matchings.
    \newblock \emph{Discrete Mathematics}, 346(4):113304, 2023.
    
    \bibitem{kupavskii2025complete}
    A.~Kupavskii and G.~Sokolov.
    \newblock A complete solution of the {Erd{\H{o}}s}-{Kleitman} matching problem
      for $n\le 3s$.
    \newblock \emph{arXiv preprint arXiv:2511.21628}, 2025.
    
    \bibitem{li2025multicolorturan}
    X.~Li, J.~Ma, and Z.~Zheng.
    \newblock On the multicolor {Tur{\'a}n} conjecture for color-critical graphs.
    \newblock \emph{Canadian Journal of Mathematics}, 2025.
    \newblock Advance online publication. Also available as arXiv:2407.14905.
    
    \bibitem{luczak2014erdHos}
    T.~{\L}uczak and K.~Mieczkowska.
    \newblock On {Erd{\H{o}}s}' extremal problem on matchings in hypergraphs.
    \newblock \emph{Journal of Combinatorial Theory, Series A}, 124:178--194, 2014.
    
    \bibitem{mubayi2006a}
    D.~Mubayi.
    \newblock A hypergraph extension of {Tur{\'a}n}'s theorem.
    \newblock \emph{Journal of Combinatorial Theory, Series B}, 96(1):122--134,
      2006.
    
    \bibitem{mubayi2005a}
    D.~Mubayi and J.~Verstra{\"e}te.
    \newblock Proof of a conjecture of {Erd{\H{o}}s} on triangles in set-systems.
    \newblock \emph{Combinatorica}, 25(5):599--614, 2005.
    
    \bibitem{mubayi2016a}
    D.~Mubayi and J.~Verstra{\"e}te.
    \newblock A survey of {Tur{\'a}n} problems for expansions.
    \newblock In \emph{Recent trends in combinatorics}, pages 117--143. Springer,
      2016.
    
    \bibitem{pikhurko2013exact}
    O.~Pikhurko.
    \newblock Exact computation of the hypergraph {Tur{\'a}n} function for expanded
      complete 2-graphs.
    \newblock \emph{Journal of Combinatorial Theory, Series B}, 103(2):220--225,
      2013.
    
    \bibitem{turan1941egy}
    P.~Tur{\'a}n.
    \newblock Egy gr{\'a}felm{\'e}leti sz{\'e}lso{\'e}rt{\'e}kfeladatr{\'o}l.
    \newblock \emph{Mat. Fiz. Lapok}, 48(3):436, 1941.
    
    \bibitem{yang2025hypergraph}
    C.~Yang, J.~Zeng, and X.-D. Zhang.
    \newblock A hypergraph analogue of {Alon-Frankl} theorem.
    \newblock \emph{arXiv preprint arXiv:2511.21096}, 2025.

    \bibitem{ZhY}
    J. Zhou and X. Yuan. Linear Tur\'{a}n problems with bounded matching number in hypergraphs. \emph{Discrete Mathematics} 349 (2026) 114772. 
    
    \end{thebibliography}
\end{document}